\DeclareMathOperator{\Ric}{Ric}
\newcommand{\lp}{\langle}
\newcommand{\rp}{\rangle}
\newcommand{\mA}{\mathcal{A}}
\newcommand{\mH}{\mathcal{H}}
\def\sideremark#1{\ifvmode\leavevmode\fi\vadjust{\vbox to0pt{\vss
 \hbox to 0pt{\hskip\hsize\hskip1em
 \vbox{\hsize3cm\tiny\raggedright\pretolerance10000
 \noindent #1\hfill}\hss}\vbox to8pt{\vfil}\vss}}}
\newtheorem{theorem}{Theorem}[section]
\newtheorem{proposition}[theorem]{Proposition}
\newtheorem{lemma}[theorem]{Lemma}
\newtheorem{corollary}[theorem]{Corollary}
\theoremstyle{definition}
\newtheorem{conjecture}[theorem]{Conjecture}
\theoremstyle{remark}
\newtheorem{remark}[theorem]{Remark}
\numberwithin{equation}{section}
\begin{document}

\title[$\delta$-stable minimal hypersurfaces]{On $\delta$-Stable Minimal Hypersurfaces in $\mathbb{R}^{n+1}$}

\author{Han Hong}
\address{Department of Mathematics and statistics \\ Beijing Jiaotong University \\ Beijing \\ China, 100044}
\email{hanhong@bjtu.edu.cn}

\author{Haizhong Li}
\address{Department of Mathematical Sciences, Tsinghua University, 100084, Beijing, China}
\email{lihz@tsinghua.edu.cn}

\author{Gaoming Wang}
\address{Yau Mathematical Sciences Center, Tsinghua University, 100084, Beijing, China}
\email{gmwang@tsinghua.edu.cn}

\begin{abstract}
In this paper, we extend several results established for stable minimal hypersurfaces to $\delta$-stable minimal hypersurfaces. These include the regularity and compactness theorems for immersed $\delta$-stable minimal hypersurfaces in $\mathbb{R}^{n+1}$ when $n \geq 3$ and $\delta > \frac{n-2}{n}$, as well as the $\delta$-stable Bernstein theorem for $n=3$ and $n=4$ for properly immersion. The range of $\delta$ is optimal, as the $n$-dimensional catenoid in $\mathbb{R}^{n+1}$ is $\frac{n-2}{n}$-stable.
\end{abstract}

\maketitle

\section{Introduction}
An immersed minimal hypersurface $M^n$ in $\mathbb{R}^{n+1}$ is said to be \textit{stable} if 
\[\int_M |\nabla \varphi|^2-|A|^2\varphi^2\geq 0,\]
for all $\varphi\in C_0^\infty(M)$ where $A$ is the second fundamental form of $M$.
Recent progress has significantly improved the understanding of stable minimal hypersurfaces in $\mathbb{R}^{n+1}$.
The stable Bernstein problem, asserting that any two-sided, complete, noncompact, stable minimal hypersurfaces in $\mathbb{R}^{n+1}$ must be flat for $2\le n\le 6$, has been solved up to $n=5$ by employing the warped $\mu$-bubble technique.

For $n\ge 3$ case, the readers are referred to Chodosh-Li \cite{chodoshliR4anisotropic} for $n=3$, Chodosh-Li-Minter-Stryker \cite{chodoshliR5} for $n=4$, and Mazet \cite{mazet} for $n=5$.
The only remaining case is $n=6$. It is worth mentioning in dimension $n=3$, Chodosh-Li \cite{chodoshliR4} and Giovanni-Paolo-Alberto \cite{catino} have provided two alternative proofs.
In $\mathbb{R}^3$ the plane is the only stable complete minimal surface \cite{Fischer-Colbrie-Schoen-The-structure-of-complete-stable,doCarmo-Peng, Pogorelov-stable, Ros-onesided-minimalsurface} and the catenoid is the only two-sided embedded complete minimal surface with index one \cite{lopezros,chodoshmaximojdg}. While higher-dimensional catenoids have index one \cite{tamzhou}, it remains unsolved whether they are the only embedded complete minimal hypersurfaces in $\mathbb{R}^{n+1}$, $n\geq 3$ with index one \cite{lichaoindexestimate}.

Assuming the volume growth condition, the Bernstein theorem for stable immersed minimal hypersurfaces in $\mathbb{R}^{n+1}$ has already been proved by Schoen-Simon-Yau \cite{Schoen1975curvature} for $3\le n\le 5$ and recently by Bellettini \cite{bellettini} for $3\leq n\leq 6$ using a new idea.
Previous to Bellettini's work, if we further require the minimal hypersurfaces to be embedded, the corresponding Bernstein theorem was proved by Schoen-Simon \cite{SchoenSimon1981Regularity} for $3\le n\le 6$.
In fact, Schoen-Simon established a regularity and compactness theorem for embedded stable minimal hypersurfaces, assuming the $\mathcal{H}^{n-2}$ Hausdoff measure of the singular set is zero, which is a stronger result than the Bernstein theorem.
The regularity and compactness results have been extended to the case where the classical singularities are absent by Wickramasekera \cite{Wickramasekera2014Regularity}.

Our primary goal is to extend these results to $\delta$-stable minimal hypersurfaces in $\mathbb{R}^{n+1}$ for $n\ge 3$.
Given $\delta>0$, a minimal hypersurface $M$ in $\mathbb{R}^{n+1}$ is said to be \textit{$\delta$-stable} if
\begin{equation}\int_M |\nabla \varphi|^2-\delta|A|^2\varphi^2\geq 0\label{eq:deltaStability}\end{equation}
for any $\varphi\in C_0^\infty(M)$.
Hyperplanes are stable and $\delta$-stable for all $\delta\in (0,1)$.
The $n$-dimensional catenoid in $\mathbb{R}^{n+1}$ is $\frac{n-2}{n}$-stable since there exists a positive function $u$ such that $\Delta u+\delta |A|^2u=0$ \cite{tamzhou} and the characterization of $\delta$-stability is equivalent to the existence of a positive kernel of the operator $\Delta+\delta |A|^2$ \cite{Fischer-Colbrie-On-complete-minimal}. 

In dimension two, Colding and Minicozzi \cite{Colding-Minicozzi-The-space-of-emb...II} raised the above concept to study the total curvature of embedded minimal discs. Kawai \cite{kawai} proved that a complete $\delta$-stable minimal surface in $\mathbb{R}^3$ for $\delta>\frac{1}{8}$ must be a plane by studying the operator $-\Delta+\delta K$ on surfaces.
Meeks-P\'erez-Ros \cite{meeksperezros} conjectured that a complete embedded $\delta$-stable minimal surface in a complete flat three-manifold is totally geodesic (flat), and they proved this conjecture under the assumption of finite genus.
In higher dimensions, Tam and Zhou \cite{tamzhou} proved that an $\frac{n-2}{n}$-stable complete immersed two-sided minimal hypersurface in $\mathbb{R}^{n+1}$ satisfying the following curvature decay condition
\begin{equation}
    \lim_{R\rightarrow \infty}\frac{1}{R^2}\int_{M\cap B^{n+1}_{2R}(0)\setminus B^{n+1}_{R}(0)}|A|^{\frac{2(n-2)}{n}}=0,\label{tamzhoutotalcurvaturegrowthcondition}
\end{equation}
is either a hyperplane or a catenoid.
Fu \cite{Fu2009deltaStable} proved that a complete, two-sided, $\delta$-stable minimal hypersurface in $\mathbb{R}^{n+1}$ with $\delta>\frac{(n-1)^2}{n^2}$ and finite total curvature is flat.
Specifically, for $n=3$, Chodosh and Li \cite{chodoshliR4} proved that a complete, two-sided, $\delta$-stable minimal hypersurface in $\mathbb{R}^{4}$, with $\delta=\frac{2}{3}$ or $\delta>\frac{1}{2}$, which is simply connected and has one end, must also be flat.

The concept of $\delta$-stability also relates to the stability  of anisotropic area functional. In fact, we consider $\Phi(M)=\int_M\Phi(\nu(x))$ where $\Phi(\cdot)$ is a positive $1$-homogeneous smooth function defined on unit sphere. The critical point of $\Phi$ is called $\Phi$-stationary and it is said to be $\Phi$-stable if the second derivative of $\Phi$ with respect to compact variation is nonnegative. We often take $\Phi$ to be elliptic, that is $I\leq D^2\Phi\leq cI$ for a constant $c$. The $\Phi$-stability implies that
\[\int_M|\nabla \phi|^2\geq \int_M\Lambda|A|^2\phi^2.\]
The constant $\Lambda$ depends on the ellipticity of $\Phi$ and note that $\Lambda\geq \frac{1}{c}$ (see \cite{chodoshliR4anisotropic} for discussion). In fact, Chodosh-Li \cite{chodoshliR4anisotropic} showed that $\frac{1}{\sqrt{2}}$-stability implies flatness of $M$.

In this paper, we focus on the regularity and compactness theorem for $\delta$-stable immersed minimal hypersurfaces.
This leads to the Bernstein theorem for $\delta$-stable minimal hypersurfaces in $\mathbb{R}^{n+1}$ for $n\ge 3$ and $\delta>\min \{ \frac{n-2}{n},\frac{(n-2)^2}{4(n-1)} \}$ under the Euclidean volume growth condition.
Subsequently, we aim to extend the Bernstein theorem to $\delta$-stable minimal hypersurfaces in $\mathbb{R}^{n+1}$ for $n=3,4$, without assuming the volume growth condition.

\subsection{Regularity and Compactness Theorem of $\delta$-Stable Minimal Hypersurfaces}

We describe our first main result.

For any $\delta>0$, we define:
\[
	n_\delta:=\inf\left\{ n \in \mathbb{N}: n\ge 2 \text{ and } \frac{(n-2)^2}{4(n-1)}\ge \delta \right\}.
\]

\begin{theorem}
	[Regularity and Compactness Theorem]
	Let $n\ge 3$ and $\delta>\frac{n-2}{n}$.
	Suppose $\left\{ M_k \right\}$ is a sequence of two-sided, $\delta$-stable minimal hypersurfaces immersed in $B^{n+1}_4(0)$ with
	\[
		0 \in \bar{M}_k, \text{ for each }k\quad \text{and}\quad \sup_{k}\mathcal{H}^n(M_k\cap B^{n+1}_4(0))<+\infty,
	\]
        and the singular set of $M_k$ satisfies
        \[
            \sup_{k}\mathrm{dim}(\bar{M}_k\backslash M_k \cap B^{n+1}_4(0))< n-4+\frac{4}{n},
        \]
        where $\bar{M}_k$ denotes the closure of $M_k$.
	Then there exists a stationary varifold $V$ of $B^{n+1}_4(0)$ and a closed subset $S\subset \mathrm{spt}\|V\|\cap B^{n+1}_{\frac{1}{4}}(0)$ with $S$ empty if $3\le n\le n_\delta-1$, $S$ discrete if $n=n_\delta$, and $\mathcal{H}^{n-n_\delta+\gamma}(S)=0$ for every $\gamma>0$ if $n\ge n_\delta+1$ such that,
	up to a subsequence, $M_k$ converges to $V$ in the varifold sense and $\mathrm{spt}\|V\|\backslash S\cap B^{n+1}_{\frac{1}{4}}(0)$ is a two-sided, $\delta$-stable minimal hypersurface immersed in $B^{n+1}_{\frac{1}{4}}(0)$.
	\label{thm_regularity_and_compactness_theorem}
\end{theorem}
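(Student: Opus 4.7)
The plan is to adapt the Schoen-Simon regularity and compactness theorem \cite{SchoenSimon1981Regularity} from the stable to the $\delta$-stable setting. The main new analytic input is a Schoen-Simon-Yau-type integral curvature bound that is valid whenever $\delta > (n-2)/n$; given this, the structural conclusions about $V$ and $S$ will follow from Allard's compactness, an $\eps$-regularity statement, and Federer-type dimension reduction against a sharp classification of $\delta$-stable minimal tangent cones.

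\textbf{Sharp integral curvature estimate.} I would test the $\delta$-stability inequality \eqref{eq:deltaStability} with $\varphi = |A|^{p}\eta$ for $\eta\in C_0^\infty(M)$ and $p>0$ (replacing $|A|$ by $(|A|^{2}+\eps^{2})^{1/2}$ and letting $\eps\to 0$ where needed). Combining with the Simons identity $\tfrac{1}{2}\Delta|A|^2 = |\nabla A|^2 - |A|^4$ and the refined Kato inequality $|\nabla A|^2 \ge (1+2/n)|\nabla|A||^2$ valid for minimal hypersurfaces, integration by parts together with Young's inequality yields
\[
\left(\delta - \frac{p^2}{2p - 1 + 2/n}\right)\int_M |A|^{2p+2}\eta^2 \le C(n,p)\int_M |A|^{2p}|\nabla\eta|^2.
\]
A short calculus check shows the infimum over $p>0$ of $p^{2}/(2p - 1 + 2/n)$ equals $(n-2)/n$ and is attained at $p=(n-2)/n$. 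Hence $\delta > (n-2)/n$ combined with the area hypothesis gives uniform local $L^{4(n-1)/n}$ bounds on $|A_{M_k}|$ independent of $k$.

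\textbf{Compactness, $\eps$-regularity, and cone classification.} Allard's compactness theorem gives, after passing to a subsequence, varifold convergence $M_k\to V$ to a stationary integral varifold $V$ on $B^{n+1}_4(0)$. With the preceding curvature bound in hand I would prove an $\eps$-regularity statement of the following form: if a density is close to $1$ and the $L^{4(n-1)/n}$-norm of the curvature is small on a ball, then $V$ is represented there by a smooth two-sided graph, via the Schoen-Simon iteration fed by the Step~1 estimate. Passing \eqref{eq:deltaStability} to the limit then shows the regular part of $V$ is a two-sided $\delta$-stable minimal hypersurface. The control on the singular set $S$ is obtained from Federer-type dimension reduction: blow-ups at any singular point subconverge to a $\delta$-stable minimal tangent cone $C$ with link $L\subset\bS^{n}$, and separation of variables converts $\delta$-stability of $C$ into a Hardy-type spectral inequality on $L$ involving $|A_L|^{2}$. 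Applying the same SSY-type computation to this reduced problem on $L$ shows that $C$ must be a hyperplane whenever $\delta>(n-2)^{2}/(4(n-1))$, that is, whenever $n<n_\delta$. Standard stratification then yields the claimed trichotomy for $S$ in the regimes $n<n_\delta$, $n=n_\delta$, and $n\ge n_\delta+1$.

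\textbf{Main obstacle.} I expect the principal difficulty to be the sharp cone classification: translating $\delta$-stability of $C$ into a Hardy/spectral inequality on $L$ and identifying $(n-2)^{2}/(4(n-1))$ as the optimal threshold requires test functions on $L$ optimised as delicately as $p=(n-2)/n$ is in the ambient SSY estimate above. Once that classification is in place, the remainder is a careful adaptation of the Schoen-Simon framework in which one tracks how $\delta$ enters each constant.
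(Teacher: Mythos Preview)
Your overall architecture is correct and matches the paper's: Allard compactness, an $\varepsilon$-regularity theorem driven by an SSY-type inequality, a classification of $\delta$-stable minimal cones at the threshold $(n-2)^2/(4(n-1))$, and Federer dimension reduction. The SSY computation you outline is also essentially what the paper uses (they phrase it as a Caccioppoli inequality for $u=|A|^\alpha$, $\alpha\in(\tfrac{n-2}{n},\min\{\delta,1\})$, followed by a De~Giorgi iteration, but the content is the same).

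There is, however, a genuine gap in your $\varepsilon$-regularity step. Your Step~1 estimate gives a uniform \emph{bound} on $\int|A|^{4(n-1)/n}$, but your $\varepsilon$-regularity hypothesis requires this quantity to be \emph{small}. You never explain how smallness is obtained when $M_k$ is varifold-close to a hyperplane (or a flat cone). This is not automatic: varifold closeness to a plane gives control on $\int|x_{n+1}|^2$, not on $\int|A|^{2\alpha}$, and the immersed setting forbids the sheet-separation arguments of Schoen--Simon. The paper closes this gap with a separate estimate (their Proposition~4.1): testing $\delta$-stability with $(g^\beta-k)^+\varphi$, where $g=\sqrt{1-(\nu\cdot e_{n+1})^2}$ and $\beta\in(\tfrac{n-2}{n-1},1)$ is chosen so that $\beta-\tfrac{(n-1)\beta(1-\beta)}{n}<\delta$, and using $|\nabla g|^2\le\tfrac{n-1}{n}|A|^2(1-g^2)$ together with $g\Delta g\ge|A|^2(\tfrac{1}{n}-g^2)$, one obtains
\[
\int_{B_1}|A|^{2\beta}\le C\Bigl(\int_{B_3}|x_{n+1}|^2\Bigr)^{\beta},
\]
which converts height smallness into curvature smallness. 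Without something playing this role, your iteration never starts. A further wrinkle you omit is that when the limit is a union of several planes (a flat cone $\boldsymbol{C}$), one must also show $\int|A|^{2\alpha}$ is small near the spine $T(\boldsymbol{C})$; the paper handles this by bootstrapping the $L^{2\alpha}$ bound to $L^{2\alpha n/(n-2)}$ via Michael--Simon and then using H\"older against the small area of a $\tau$-tube around $T(\boldsymbol{C})$.

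Your proposed cone classification via separation of variables and a Hardy inequality on the link is different from the paper's route (they work directly on the cone, sharpening Simons' inequality using $A_{ij,n}=-A_{ij}/r$ to obtain $|A|\Delta|A|+|A|^4\ge(p-1)|\nabla|A||^2+(3-p)|A|^2/r^2$ for $p\le 1+\tfrac{2}{n-1}$, then test with $(|A|^2+\varepsilon)^{\delta/2}\varphi$ and a two-sided radial cutoff $\varphi=r^{\delta+\varepsilon}\max\{1,r\}^{1-n/2-2\varepsilon}$). Your approach is plausible and may well give the same threshold, but you should be aware that the sharpness hinges on the cone-specific refinement of Kato, not the ambient one you used in Step~1; the ordinary $\tfrac{2}{n}$ Kato constant would not suffice.
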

Readers are directed to Section \ref{sec:notation} for the relevant notations.

This theorem generalizes the regularity and compactness theorem due to Schoen-Simon \cite{SchoenSimon1981Regularity} to the immersed case and $\delta$-stable case.
For the usual stable case, $\delta=1$, it was conjectured by Bellettini \cite{bellettini} as follows:
\begin{conjecture}
	The class of branched two-sided stable minimal $n$-dimensional immersions with the singular set of locally finite $(n-2)$-measure is compact under varifold convergence.
\end{conjecture}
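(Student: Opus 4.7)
The plan is to adapt the strategy of Theorem~\ref{thm_regularity_and_compactness_theorem} to the stable case $\delta = 1$ and to strengthen its conclusion by replacing the dimensional bound on the branch set with the critical finiteness of $(n-2)$-measure. The three main ingredients I would use are: (i) the Schoen-Simon-Yau integral curvature estimate, which for $n \le 6$ yields a uniform local $L^{2+2q}$ bound on $|A|$ for some $q>0$ provided one has admissible cutoff functions; (ii) Wickramasekera's regularity theorem for stable codimension-one integral varifolds with no classical singularities, applied on the complement of the branch locus; and (iii) the Krummel-Wickramasekera tangent-cone structure at classical singularities of branched stable minimal varifolds.

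First, extract a subsequence converging in the varifold sense to a stationary integral varifold $V$ using the uniform local mass bound. To obtain uniform interior curvature estimates for $M_k$, I would insert into the stability inequality cutoffs $\varphi_{k,\rho}$ that vanish in a $\rho$-neighborhood of the branch set $\bar{M}_k\setminus M_k$. The hypothesis of locally finite $(n-2)$-measure is precisely what permits a logarithmic cutoff at the critical capacity scale: cover the branch set at scale $\rho$ by balls of radii $r_i$ with $\sum r_i^{n-2}$ uniformly bounded, take a logarithmic cutoff on each, and verify that $\int_{M_k}|\nabla \varphi_{k,\rho}|^2\to 0$ as $\rho\to 0$. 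Running the Simons/Schoen-Simon-Yau iteration with such test functions produces uniform $L^{2+2q}$ bounds on $|A_{M_k}|$ on compact subsets for a suitable $q>0$ when $n\le 6$. Wickramasekera's $\varepsilon$-regularity then applies to $V$ on the complement of its classical singular locus and upgrades varifold convergence to $C^\infty$ convergence off that locus together with a further closed set of dimension at most $n-7$.

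The main obstacle is showing that the branch locus of the limit varifold $V$ has locally finite $(n-2)$-measure, so that the limit lies in the same class and the class is actually closed under varifold convergence. A priori, branch points of $M_k$ can accumulate onto sets of Hausdorff dimension up to $n-2$, and distinct smooth sheets of $M_k$ can coalesce to produce classical singularities not present in any $M_k$. The proposed remedy is a quantitative stratification in the style of Naber-Valtorta applied to the $(n-2)$-stratum of $V$, using the uniform $L^{2+2q}$ bound on $|A_{M_k}|$ to prove that the set of points with approximately classical cone-splitting is $(n-2)$-rectifiable with Ahlfors-type $(n-2)$-density bounds that are lower semicontinuous under varifold convergence. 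Establishing this lower semicontinuity exactly at the critical dimension $n-2$, with no scaling slack, is the essential analytical difficulty and is precisely where the conjecture lies beyond the dimensional trade-off argument used to prove Theorem~\ref{thm_regularity_and_compactness_theorem}.
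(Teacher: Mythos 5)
You are addressing a statement that the paper itself records as an \emph{open conjecture} (due to Bellettini), not a theorem: the paper's Theorem \ref{thm_regularity_and_compactness_theorem} confirms it only under the much stronger hypothesis that the singular set has dimension strictly below $n-4+\frac{4}{n}$, which is far from ``locally finite $\mathcal{H}^{n-2}$-measure.'' Your proposal does not close that distance, and you concede this yourself: the decisive step --- lower semicontinuity of Ahlfors-type $(n-2)$-density bounds at the critical dimension, via a Naber--Valtorta-style stratification --- is described as ``the essential analytical difficulty'' and is nowhere established. A strategy whose key step is acknowledged to be unresolved is not a proof, so the proposal has a genuine gap at exactly the point where the conjecture is hard.

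Beyond that acknowledged gap, two of your intermediate steps fail as stated. First, the cutoff/capacity argument: local finiteness of $\mathcal{H}^{n-2}$ does suffice for the plain stability inequality (an $L^2$ bound on $|A|$, as in Lemma \ref{lem_boundedLpNorm}, since such sets have zero $2$-capacity), but the Schoen--Simon--Yau/Caccioppoli iteration requires absorbing error terms weighted by powers of $|A|$; in the paper's version (Lemma \ref{lem:weakPoincare}) the error $\int (u-k)^2|\nabla\zeta_\varepsilon|^2$ is controlled by $\sum_i r_i^{\,n-2-2\alpha}$ with $\alpha>\frac{n-2}{n}$, so one needs $\mathcal{H}^{n-2-2\alpha}(\mathrm{sing}M)=0$, i.e.\ codimension greater than $4-\frac{4}{n}$. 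For a branch set of positive, merely locally finite $\mathcal{H}^{n-2}$-measure these terms do not tend to zero, so your claimed uniform local $L^{2+2q}$ bounds on $|A_{M_k}|$ are not obtained, and in any case SSY-type estimates restrict you to $n\le 6$ while the conjecture carries no dimensional restriction. Second, Wickramasekera's regularity theorem \cite{Wickramasekera2014Regularity} requires the absence of classical singularities; varifolds associated to immersions generically have classical singularities along self-intersections, and new ones can be created in the limit by coalescing sheets, which is precisely why only the multiplicity-two case \cite{Wickramasekera2008regularityCompactMult2} is known and why the conjecture remains open. So both analytic inputs of your scheme are unavailable under the conjecture's hypotheses, independently of the unproved semicontinuity step.
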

Note that Wickramasekera \cite{Wickramasekera2008regularityCompactMult2} verified this conjecture for the case of multiplicity-2.
However, the general case, even for the case of codimension-7 singular set, remained open until our work.
Our theorem confirms the above conjecture when the singular set has codimension greater than $4-\frac{4}{n}$.
\begin{corollary}
	The class of two-sided, stable minimal $n$-dimensional immersions with singular set of locally finite $\bar{n}$-measure with $0<\bar{n}<n-4+\frac{4}{n}$ is compact under the varifold convergence.
\end{corollary}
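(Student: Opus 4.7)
The plan is to derive the corollary as an immediate specialization of Theorem \ref{thm_regularity_and_compactness_theorem} to $\delta=1$. Note that $1>\frac{n-2}{n}$ for every $n\ge 3$, so the hypothesis on $\delta$ is satisfied. The constant $n_1$ is obtained by solving $(n-2)^2\ge 4(n-1)$, equivalently $n^2-8n+8\ge 0$, which gives $n_1=7$.

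Given a sequence $\{M_k\}$ of two-sided stable minimal $n$-dimensional immersions with singular sets of locally finite $\bar{n}$-measure, $0<\bar{n}<n-4+\frac{4}{n}$, varifold compactness is a purely local question, so I would cover the ambient domain by balls and on each fixed ball $B^{n+1}_R(x)$ translate and rescale to reduce to the setting of Theorem \ref{thm_regularity_and_compactness_theorem} on $B^{n+1}_4(0)$. The three hypotheses are verified as follows: $\delta$-stability with $\delta=1$ is simply the stability assumption; the uniform $\mathcal{H}^n$-mass bound is built into any reasonable formulation of varifold compactness (or else extracted via the monotonicity formula); and the uniform dimension bound $\sup_k\mathrm{dim}(\bar{M}_k\setminus M_k\cap B^{n+1}_4(0))\le\bar{n}<n-4+\frac{4}{n}$ follows directly from the locally finite $\bar{n}$-measure assumption, since a set of locally finite $\bar{n}$-measure has Hausdorff dimension at most $\bar{n}$.

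Applying Theorem \ref{thm_regularity_and_compactness_theorem} with $\delta=1$ then yields, after passing to a subsequence, a stationary varifold $V$ on $B^{n+1}_{1/4}(0)$ whose support minus a closed set $S$ is a two-sided stable minimal immersion, with $S$ empty for $3\le n\le 6$, discrete for $n=7$, and of Hausdorff dimension at most $n-7$ for $n\ge 8$. In every case $\mathrm{dim}\, S\le n-7<n-4+\frac{4}{n}$ (the strict inequality reducing to $3+\frac{4}{n}>0$), so choosing any $\bar{n}'$ with $\mathrm{dim}\, S<\bar{n}'<n-4+\frac{4}{n}$ gives $\mathcal{H}^{\bar{n}'}(S)=0$ locally and places the limit in the class. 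A standard diagonal argument over an exhaustion of the ambient domain by such balls then promotes this local convergence to a global subsequential varifold limit.

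The only non-trivial input is Theorem \ref{thm_regularity_and_compactness_theorem} itself; once it is granted, the corollary reduces to checking hypotheses and unwinding definitions. I anticipate no real obstacle beyond the bookkeeping of the diagonal-plus-exhaustion procedure needed to extract a single subsequence valid on the whole ambient domain, which is entirely routine. No new geometric input is required.
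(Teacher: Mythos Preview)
Your proposal is correct and matches the paper's approach: the corollary is stated in the paper without proof, as an immediate consequence of Theorem \ref{thm_regularity_and_compactness_theorem} specialized to $\delta=1$, which is exactly what you do. Your computation $n_1=7$ and the verification that the limit's singular set has dimension at most $n-7<n-4+\frac{4}{n}$ are the only non-automatic checks, and you handle them correctly.
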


If we choose $\delta>\frac{(n-2)^2}{4(n-1)}$, which implies $n_\delta>n$, then the singular set of $M$ is empty, and we can obtain the following Generalized Bernstein Theorem and the curvature estimate in the $\delta$-stable case.

\begin{corollary}[Generalized Bernstein Theorem]
	Let $n\geq 3$ and $\delta>\max\{\frac{n-2}{n},\frac{(n-2)^2}{4(n-1)}\}$. Suppose $M^n$ is a complete, two-sided, $\delta$-stable minimal hypersurface in $\mathbb{R}^{n+1}$ satisfying the Euclidean volume growth condition
        \begin{equation}
            \mathcal{H}^{n}(M\cap B^{n+1}_R(0))\le \Lambda R^n
            \label{eq:areaGrowthCondition}
        \end{equation}
	for some $\Lambda \in (0,+\infty)$.
	Then $M$ is flat.
	\label{thm:mainAreaGrowth}
\end{corollary}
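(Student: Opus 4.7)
The numerical assumption $\delta > \frac{(n-2)^2}{4(n-1)}$ forces $n < n_\delta$, so the singular set $S$ in \cref{thm_regularity_and_compactness_theorem} is empty in every application below: each limit produced is a smooth, two-sided, $\delta$-stable minimal hypersurface in $B^{n+1}_{\frac{1}{4}}(0)$.

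The plan is to upgrade \cref{thm_regularity_and_compactness_theorem} to a pointwise interior curvature estimate
\[
    \sup_{N \cap B^{n+1}_{R/2}(0)} |A|^2 \le \frac{C(n,\delta,\Lambda)}{R^2},
\]
valid for every two-sided $\delta$-stable minimal hypersurface $N$ immersed in $B^{n+1}_R(0)$ with $\mathcal{H}^n(N\cap B^{n+1}_R(0)) \le \Lambda R^n$. Applied to the complete $M$ of the statement on balls of arbitrary radius (the volume bound being a direct consequence of the hypothesis), the estimate gives $|A|(x_0)^2 \le C/R^2 \to 0$ as $R\to\infty$ at every $x_0 \in M$, whence $|A|\equiv 0$ and $M$ is flat.

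The estimate itself is proved by a standard contradiction-and-blow-up argument. Failure produces, after a Choi--Schoen point-picking, a rescaled sequence of two-sided $\delta$-stable minimal hypersurfaces $\widetilde N_k$ with $|\widetilde A_k|\le C$, $|\widetilde A_k|(0)=1$, and domains exhausting $\mathbb{R}^{n+1}$; Euclidean volume growth of the $\widetilde N_k$ is inherited from monotonicity applied to the original sequence. Applying \cref{thm_regularity_and_compactness_theorem} on balls of arbitrary radius---together with the fact that a uniform second-fundamental-form bound promotes varifold convergence to $C^{2,\alpha}_{\mathrm{loc}}$ convergence---one extracts a smooth complete two-sided $\delta$-stable minimal hypersurface $V\subset \mathbb{R}^{n+1}$ with $|A_V|(0)=1$, bounded $|A_V|$, and Euclidean volume growth. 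Blowing $V$ down via $V_j := V/j$ and invoking \cref{thm_regularity_and_compactness_theorem} once more yields a smooth $\delta$-stable limit $W$ which, by the monotonicity formula, is a stationary cone, and by the uniform bound $|A_{V_j}| = j^{-1}|A_V| \to 0$, is a multiplicity-$m$ hyperplane. The strict inequality $\delta > \frac{n-2}{n}$ excludes the catenoid---the canonical $\frac{n-2}{n}$-stable example exhibiting density-$2$ tangent cone at infinity---so a rigidity argument (Allard's regularity together with a Moser/Bombieri--De~Giorgi--Giusti-type graph Bernstein in the $m=1$ case, and a density-gap argument to rule out $m\ge 2$) forces $V$ itself to be a hyperplane, contradicting $|A_V|(0)=1$.

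\textbf{Main obstacle.} The hard step is this last one: ruling out a nonflat smooth complete two-sided $\delta$-stable minimal hypersurface in $\mathbb{R}^{n+1}$ with bounded curvature and Euclidean volume growth. The two halves of the hypothesis on $\delta$ enter complementarily: $\delta > \frac{(n-2)^2}{4(n-1)}$ provides the smooth regularity of blow-down limits via \cref{thm_regularity_and_compactness_theorem}, while $\delta > \frac{n-2}{n}$ strictly rules out catenoid-like asymptotic multiplicity.
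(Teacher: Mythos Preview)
There is a genuine gap in your final step, and a scaling error obscures how the argument actually closes.

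First, the scaling: under $V_j := V/j$ one has $|A_{V_j}|(y) = j\,|A_V|(jy)$, so $|A_{V_j}|(0) = j\,|A_V|(0) \to \infty$, not $j^{-1}|A_V|(0)\to 0$. Hence the flatness of the tangent cone at infinity $W$ does not come from any curvature decay of the $V_j$; it comes only from \cref{thm_regularity_and_compactness_theorem} (smoothness of $W$ since $n<n_\delta$) together with the fact that a smooth minimal cone is a union of hyperplanes.

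Second, and more seriously, your route from ``the blow-down is a multiplicity-$m$ hyperplane'' to ``$V$ is flat'' does not work as written. Allard only handles $m=1$, and the ``density-gap argument to rule out $m\ge 2$'' is not an available result for $\delta$-stable immersions; the remark that $\delta>\frac{n-2}{n}$ excludes catenoid-like behaviour is a heuristic, not a proof. You have effectively reduced the corollary to itself (with an extra, unhelpful bounded-curvature hypothesis).

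The paper avoids all of this, and in particular never splits by multiplicity. It blows down $M$ itself: assuming $|A|(0)>0$, the rescalings $M_j:=M/j$ satisfy $|A_{M_j}|(0)=j\,|A|(0)\to\infty$, and by \cref{thm_regularity_and_compactness_theorem} their varifold limit is a sum of hyperplanes with multiplicity. The contradiction is then obtained from Proposition~\ref{prop:closeClassicalCone}---a consequence of the $\varepsilon$-regularity Theorem~\ref{thm:epsilonRegularity} and the $L^{2\beta}$ estimate Proposition~\ref{prop:Alpha}---which says that varifold convergence of $\delta$-stable minimal immersions to \emph{any} flat cone, of \emph{any} multiplicity, forces $\sup_{B^{n+1}_{1/2}(0)\cap M_j}|A_{M_j}|\to 0$. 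This immediately contradicts $|A_{M_j}|(0)\to\infty$. The $\varepsilon$-regularity mechanism is precisely the missing idea that replaces your density-gap; once you have it, the point-picking detour is seen to be redundant, since the same contradiction applies directly to $M$.
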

\begin{corollary}[Curvature Estimate]
	Let $n\ge 3$ and $\delta> \max \{ \frac{n-2}{n},\frac{(n-2)^2}{4(n-1)} \}$.
	Assume $M$ is a two-sided, $\delta$-stable minimal hypersurface immersed in $B^{n+1}_4(0)$ with $0 \in M$ satisfying $\mathcal{H}^n(M \cap B^{n+1}_4(0))\le \Lambda$ for some $\Lambda \in (0,\infty)$.
	Then there exists a constant $C=C(n,\Lambda,\delta)$ such that
	\[
		\sup_{x \in B^{n+1}_{\frac{1}{4}}(0)}|A_M|\le C
	\]
	where $A_M$ is the second fundamental form of $M$.
	\label{thm:mainCurvatureEst}
\end{corollary}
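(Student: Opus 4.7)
The natural approach is a contradiction and blow-up argument that reduces this corollary to the immediately preceding Generalized Bernstein Theorem, in the same spirit as the classical Choi--Schoen curvature estimate reduces to the corresponding Bernstein theorem. Suppose the conclusion fails. Then there exists a sequence $\{M_k\}$ of two-sided $\delta$-stable minimal hypersurfaces immersed in $B^{n+1}_4(0)$ with $0\in M_k$, $\mathcal{H}^n(M_k\cap B^{n+1}_4(0))\leq \Lambda$, but with $\sup_{B^{n+1}_{1/4}(0)}|A_{M_k}|\to\infty$.

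By a standard point-picking argument, one can find points $y_k\in M_k\cap \overline{B^{n+1}_{1/2}(0)}$ and scales $\lambda_k:=|A_{M_k}|(y_k)\to\infty$ such that $\sup_{M_k\cap B^{n+1}_{\rho_k}(y_k)}|A_{M_k}|\leq 2\lambda_k$ for radii $\rho_k>0$ with $\rho_k\lambda_k\to\infty$. Rescaling produces $\cM_k:=\lambda_k(M_k-y_k)$, a sequence of $\delta$-stable minimal immersions (the $\delta$-stability inequality \eqref{eq:deltaStability} is scale-invariant) with $0\in\cM_k$, $|A_{\cM_k}|(0)=1$, and $|A_{\cM_k}|\leq 2$ on $\cM_k\cap B^{n+1}_{\rho_k\lambda_k}(0)$. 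Applying the monotonicity formula to the original $M_k$ centered at $y_k$ and rescaling yields a uniform Euclidean area bound $\mathcal{H}^n(\cM_k\cap B^{n+1}_R(0))\leq C(n,\Lambda)R^n$ for all $R\leq \rho_k\lambda_k$.

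The uniform curvature bound together with the area bound allows the extraction of a subsequential $C^2_{\mathrm{loc}}$ graphical limit $M_\infty$, which is a complete, two-sided, minimally immersed hypersurface in $\mathbb{R}^{n+1}$ with Euclidean volume growth (preserving the constant $C(n,\Lambda)$) and satisfying $|A_{M_\infty}|(0)=1$. Since $\delta$-stability passes to $C^2_{\mathrm{loc}}$ limits (any $\varphi\in C_0^\infty(M_\infty)$ lifts via the graphical convergence to compactly supported test functions on $\cM_k$ to which \eqref{eq:deltaStability} applies, and one passes to the limit), $M_\infty$ is itself $\delta$-stable. Applying the Generalized Bernstein Theorem (Corollary~\ref{thm:mainAreaGrowth}) to $M_\infty$ forces it to be flat, so $|A_{M_\infty}|\equiv 0$, contradicting $|A_{M_\infty}|(0)=1$.

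The main technical obstacle lies in the immersed setting of the blow-up: one must ensure that the limit $M_\infty$ is genuinely a smooth complete immersed minimal hypersurface (rather than merely a stationary varifold) and that both its Euclidean volume growth and its $\delta$-stability are rigorously inherited from the rescaled sequence, notwithstanding potential self-intersections in the ambient space. The uniform bound on $|A_{\cM_k}|$ is what makes this possible: it reduces the convergence locally to smooth graphical convergence over the limit tangent hyperplanes at each immersed point, after which the inheritance of the stability inequality and the area bound follow from straightforward cutoff arguments on the graphs.
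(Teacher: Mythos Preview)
Your proposal is correct and is precisely the ``standard blow-up argument'' that the paper invokes in one line to deduce Corollary~\ref{thm:mainCurvatureEst} from Corollary~\ref{thm:mainAreaGrowth}. The point-picking, the scale-invariance of $\delta$-stability, the monotonicity-derived area growth for the rescaled sequence, and the smooth graphical convergence forced by the uniform curvature bound are exactly the ingredients the paper has in mind, and your identification of the immersed-setting subtlety (and its resolution via the $|A|\le 2$ bound) is on point.
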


These two results extend the works of Schoen-Simon-Yau \cite{Schoen1975curvature} and Bellettini \cite{bellettini} in two aspects. First, the lower bound of $\delta$ is improved to $\frac{n-2}{n}$ for $3 \leq n \leq 5$ and $\frac{4}{5}$ for $n=6$.
Second, it considers $\delta$-stability for $n \geq 7$, which is also new, to the best of the authors' knowledge.
The range of $\delta$ is sharp since the $n$-dimensional catenoid is $\frac{n-2}{n}$-stable, and Simons-type cone $C_{p,q}$ defined by
\[
	C_{p,q}= \{ (x,y) \in \mathbb{R}^{p+1}\times \mathbb{R}^{q+1}:q|x|^2=p|y|^2 \},
\]
is $\frac{(n-2)^2}{4(n-1)}$-stable for $1\le p\le n-2$, $q=n-p-1$ (cf. 
\cite{Simons1968minimalCone, Li2020bernsteinCone}).

To prove Theorem \ref{thm_regularity_and_compactness_theorem}, we also need to classify all $\delta$-stable minimal cones in $\mathbb{R}^{n+1}$ for $\delta > \frac{(n-2)^2}{4(n-1)}$, $n \geq 2$, which is of independent interest.
\begin{proposition}
	\label{thm:deltaCone}
	Suppose $M$ is a $\delta$-stable (immersed) minimal cone in $\mathbb{R}^{n+1}$ for $\delta>\frac{(n-2)^2}{4(n-1)}$ and $n\ge 2$.
	Then $M$ is flat.
\end{proposition}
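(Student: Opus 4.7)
The cone $M=C(\Sigma)$ sits over the link $\Sigma := M\cap S^n$, a closed immersed minimal $(n-1)$-submanifold of $S^n$, and $|A_M|^2(r\theta) = |A_\Sigma|^2(\theta)/r^2$. My plan is first to convert $\delta$-stability of $M$ into a Poincar\'e-type inequality on $\Sigma$ via separation of variables and the sharp Hardy inequality, and then to feed a power of $|A_\Sigma|$ into that inequality together with the Simons identity and the improved Kato inequality on $\Sigma$. The case $n=2$ is immediate since every minimal $1$-dimensional submanifold of $S^2$ is a great circle, so below I focus on $n\geq 3$.

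Plugging the separated test function $\varphi(r,\theta) = \psi(\theta)\eta(r)$ into \eqref{eq:deltaStability} splits the stability integral into an $r$-integral and a $\Sigma$-integral. Choosing $\eta(r) = r^{-(n-2)/2}\chi(R^{-1}\log r)$ for a fixed bump $\chi$ and letting $R \to \infty$, the ratio $\int_0^\infty \eta'^2\, r^{n-1}\, dr / \int_0^\infty \eta^2\, r^{n-3}\, dr$ converges to the sharp Hardy constant $\bigl(\tfrac{n-2}{2}\bigr)^2$. This yields the link inequality
\[\delta\int_\Sigma |A_\Sigma|^2 \psi^2 \leq \int_\Sigma |\nabla_\Sigma\psi|^2 + \Bigl(\tfrac{n-2}{2}\Bigr)^2 \int_\Sigma \psi^2, \qquad \forall \psi \in C^\infty(\Sigma).\]

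Next, set $u := |A_\Sigma|$ and $v := \sqrt{u^2+\varepsilon^2}$ (smooth and strictly positive), and apply the link inequality to $\psi = v^p$ for a parameter $p \in (0, 1]$ to be optimized. Simons' identity $\tfrac{1}{2}\Delta_\Sigma u^2 = |\nabla_\Sigma A_\Sigma|^2 - u^4 + (n-1)u^2$ for $\Sigma^{n-1}\subset S^n$, multiplied by $v^{2p-2}$ and integrated by parts on the closed manifold, combined with the refined Kato inequality $|\nabla_\Sigma A_\Sigma|^2 \geq (1 + \tfrac{2}{n-1})|\nabla_\Sigma u|^2$ and the bound $u^2 v^{2p-4} \leq v^{2p-2}$ (needed to handle the negative factor $(2p-2)$ for $p<1$), yields
\[\int_\Sigma v^{2p-2} u^4 \geq \Bigl(2p-1+\tfrac{2}{n-1}\Bigr)\int_\Sigma v^{2p-2} |\nabla_\Sigma u|^2 + (n-1)\int_\Sigma v^{2p-2} u^2.\]
Expanding $u^2 v^{2p}$, $v^{2p}$, and $v^{2p-2}|\nabla u|^2$ in powers of $\varepsilon^2$ and combining with the link inequality collects all $\varepsilon$-dependent remainders into a right-hand side bounded by $C(n)\varepsilon^{2p}|\Sigma|$, leaving
\[\Bigl[\delta\bigl(2p-1+\tfrac{2}{n-1}\bigr) - p^2\Bigr]\int_\Sigma v^{2p-4} u^2 |\nabla_\Sigma u|^2 + \Bigl[\delta(n-1) - \tfrac{(n-2)^2}{4}\Bigr]\int_\Sigma v^{2p-2} u^2 \leq C(n)\varepsilon^{2p}|\Sigma|.\]

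The final step is to optimize in $p$. For $n\geq 4$ the choice $p = \tfrac{n-3}{n-1}$ gives $2p - 1 + \tfrac{2}{n-1} = \tfrac{n-3}{n-1}$, so the first bracket becomes $\tfrac{n-3}{n-1}\bigl(\delta - \tfrac{n-3}{n-1}\bigr)$; the elementary identity $(n-2)^2 - 4(n-3) = (n-4)^2 \geq 0$ gives $\tfrac{(n-2)^2}{4(n-1)} \geq \tfrac{n-3}{n-1}$, so the hypothesis $\delta > \tfrac{(n-2)^2}{4(n-1)}$ forces both brackets to be nonnegative, with the second strictly positive. For $n = 3$ the same conclusion follows with any $p \in (0, 2\delta)$ under $\delta > \tfrac{1}{8}$. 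Dropping the nonnegative first term, letting $\varepsilon \to 0$, and using monotone convergence $v^{2p-2}u^2 \uparrow u^{2p}$ produces $\int_\Sigma u^{2p} = 0$, hence $u \equiv 0$, so $\Sigma$ is totally geodesic in $S^n$ and $M$ is a hyperplane. The main subtlety I expect is the rigorous $\varepsilon$-regularization, needed because $u^p$ fails to be $C^1$ where $u$ vanishes when $p<1$; building the entire argument around $v = \sqrt{u^2+\varepsilon^2}$ from the outset sidesteps all divergence issues and reduces the proof to the bound displayed above.
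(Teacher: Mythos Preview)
Your proof is correct and takes a genuinely different route from the paper's. The paper works directly on the cone: it derives a cone-specific Simons-type inequality $|A|\Delta|A| + |A|^4 \geq (p-1)|\nabla|A||^2 + (3-p)r^{-2}|A|^2$ (the extra $r^{-2}|A|^2$ term comes from the radial structure $A_{ij,n} = -A_{ij}/r$), inserts $(|A|^2+\varepsilon)^{\delta/2}\varphi$ into the stability inequality, sets $p = 2-\delta$ to eliminate the gradient term, and after $\varepsilon\to 0$ obtains $\delta(1+\delta)\int_M r^{-2}|A|^{2\delta}\varphi^2 \leq \int_M |A|^{2\delta}|\nabla\varphi|^2$; the conclusion then follows from the hand-crafted radial test function $\varphi = r^{\delta+\varepsilon}\max\{1,r\}^{1-n/2-2\varepsilon}$ and the coefficient comparison $\delta(1+\delta)>(1+\delta-\tfrac{n}{2})^2$. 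You instead separate variables at the outset, packaging the entire radial contribution into the sharp Hardy constant $(\tfrac{n-2}{2})^2$ and reducing everything to a spectral inequality on the compact link $\Sigma\subset S^n$, after which the \emph{standard} Simons identity and Kato inequality for minimal hypersurfaces of a sphere suffice. Your decomposition is conceptually cleaner---the threshold $\delta > \frac{(n-2)^2}{4(n-1)}$ emerges transparently as the condition that the Simons coefficient $\delta(n-1)$ beat the Hardy constant $\frac{(n-2)^2}{4}$---while the paper's approach avoids invoking Hardy as a separate ingredient at the cost of a more intricate cone-specific Kato computation and a bespoke test function. Both arguments need essentially the same $\varepsilon$-regularization to cope with the non-smoothness of powers of $|A|$.
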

The range of $\delta$ is sharp due to Simons-type cone.
Caiyan Li \cite{Li2020bernsteinCone} has already proved Proposition \ref{thm:deltaCone} for $\delta>\frac{2}{n}-\frac{1+(n-2)\sqrt{n}}{n-1}$, which is not sharp.

The rest of the proof of Theorem \ref{thm_regularity_and_compactness_theorem} follows the method of Bellettini \cite{bellettini}, with a Caccioppoli-type inequality, an iteration \`a la De Giorgi, and a tangent cone analysis. In particular, we show that by modifying the proof of Theorem 3 in \cite{bellettini}, we can establish the $\varepsilon$-regularity theorem for the second fundamental form (Theorem \ref{thm:epsilonRegularity}) for $\delta$-stable minimal hypersurfaces in $\mathbb{R}^{n+1}$ for $n\ge 3$ and $\delta>\frac{n-2}{n}$.

\subsection{$\delta$-Stable Bernstein Theorem in $\mathbb{R}^4$ and $\mathbb{R}^5$}%
\label{sub:_delta_stable_bernstein_theorem_in_r_4_and_r_5_}
The next main result in our paper addresses the Bernstein theorem for properly immersed $\delta$-stable minimal hypersurfaces in $\mathbb{R}^4$ and $\mathbb{R}^5$, without assuming the volume growth condition.

\begin{theorem}
    [$\delta$-stable Bernstein Theorem] \label{delta-stable}
    Let $n=3,4$ and $\delta>\frac{n-2}{n}$. 
    Suppose $M$ is a complete, two-sided, $\delta$-stable immersed minimal hypersurface in $\mathbb{R}^{n+1}$.
    If $M$ also satisfies one of the following conditions:
    \begin{enumerate}[\normalfont(a)]
        \item $\delta>\frac{n(n-2)}{4(n-1)}$; or \label{thm:itSSYdelta}
        \item $M$ is simply-connected and properly immersed with finite ends.
        \label{thm:itProper}
    \end{enumerate}
    Then $M$ is flat.
\end{theorem}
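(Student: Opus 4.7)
The proof of Theorem~\ref{delta-stable} splits into parts~\ref{thm:itSSYdelta} and~\ref{thm:itProper}, handled by distinct techniques.

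\textbf{Part~\ref{thm:itSSYdelta}.}
The plan is to extract Euclidean area growth from the $\delta$-stability inequality via a Schoen-Simon-Yau type argument, and then invoke Corollary~\ref{thm:mainAreaGrowth}.
I would insert the test function $\varphi = |A|^{1+q}\eta$ (for $\eta \in C_c^\infty(M)$ and a parameter $q \geq 0$) into \eqref{eq:deltaStability}, combine the result with Simons' identity $\Delta|A|^2 \geq 2|\nabla A|^2 - 2|A|^4$ multiplied by $|A|^{2q}\eta^2$, and use the codimension-one Kato inequality $|\nabla A|^2 \geq (1+\tfrac{2}{n})|\nabla|A||^2$ together with Cauchy-Schwarz to absorb the cross terms.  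The resulting Caccioppoli-type estimate
\[
    \int_M |A|^{2+2q}\eta^{2+2q} \leq C(n,\delta,q)\int_M |\nabla\eta|^{2+2q}
\]
should hold for every $q$ in an interval that is nonempty precisely when $\delta > \tfrac{n(n-2)}{4(n-1)}$.
A Moser iteration based on the Michael-Simon-Sobolev inequality then upgrades this local $L^p$ bound into the Euclidean area bound $\mathcal{H}^n(M \cap B_R^{n+1}(0)) \leq \Lambda R^n$.
Since $\delta > \tfrac{n(n-2)}{4(n-1)}$ implies $\delta > \max\{\tfrac{n-2}{n},\tfrac{(n-2)^2}{4(n-1)}\}$ for $n \in \{3,4\}$, Corollary~\ref{thm:mainAreaGrowth} delivers flatness.

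\textbf{Part~\ref{thm:itProper}.}
The plan adapts the warped $\mu$-bubble strategy of Chodosh-Li \cite{chodoshliR4} (for $n=3$) and Chodosh-Li-Minter-Stryker \cite{chodoshliR5} (for $n=4$) to the $\delta$-stable setting.
First, properness and the finite-end hypothesis reduce the problem to the single-end case: a Frankel-type separation argument across large geodesic balls, combined with simple-connectedness to control the topology, shows that distinct ends cannot coexist without producing a contradiction via an intermediate $\mu$-bubble.
Second, $\delta$-stability with $\delta > \tfrac{n-2}{n}$ provides (by the Fischer-Colbrie principle) a positive first eigenfunction $u$ for the operator $\Delta + \delta|A|^2$; the warped metric $\widetilde g = u^{\frac{2}{n-1}}g$ then has a weighted bi-Ricci quantity bounded below by a positive constant depending on $\delta - \tfrac{n-2}{n}$.
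Third, within dyadic annuli of $(M,\widetilde g)$ one constructs $\mu$-bubbles $\Sigma_R$ whose intrinsic $\widetilde g$-diameters are uniformly controlled by a warped Bonnet-Myers estimate.
Finally, the single-end reduction together with properness forces $\Sigma_R$ to disconnect $M$ at arbitrarily large scales, which is incompatible with the uniform diameter bound unless $|A| \equiv 0$.

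The main obstacle lies in part~\ref{thm:itProper}: when $\delta$ is only slightly above $\tfrac{n-2}{n}$, the positivity margin produced in the warped bi-Ricci analysis degenerates, and a careful $\delta$-dependent choice of the conformal factor $u$ is required to retain a finite diameter bound on $\Sigma_R$.
For $n=4$ this is particularly delicate, since the argument of \cite{chodoshliR5} uses a tight rearrangement of the Schoen-Yau identity together with Kato's inequality; extending this rearrangement to extract a strictly positive (rather than merely nonnegative) bi-Ricci bound that survives the loss from $\delta<1$ is the technical heart of the proof.
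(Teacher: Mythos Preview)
Your proposal for part~\ref{thm:itSSYdelta} contains a genuine gap. The Schoen--Simon--Yau Caccioppoli inequality
\[
\int_M |A|^{2+2q}\eta^{2+2q} \le C\int_M |\nabla\eta|^{2+2q}
\]
cannot be upgraded to Euclidean area growth by Moser iteration: the right-hand side already scales like $R^{-2-2q}$ times the area of the support of $\eta$, so any control of $|A|$ you extract presupposes an area bound rather than producing one. In the classical SSY argument area growth is the \emph{hypothesis} that lets one send $R\to\infty$ and force $|A|\equiv 0$; it is never the conclusion. The paper's route for~\ref{thm:itSSYdelta} is therefore quite different and considerably more elaborate: a point-picking argument reduces to bounded $|A|$; passage to the universal cover together with the Cheng--Zhou end theorem~\cite{chengzhouoneend} (which needs the curvature bound) reduces to the simply-connected finite-end situation; Proposition~\ref{thm:deriveAreaGrowth}\ref{prop:itGeodesic}---which already requires the full $\mu$-bubble machinery of Section~\ref{sec:area_growth_estimate}---then supplies the \emph{intrinsic} volume growth; and only at that point does the SSY mechanism (Proposition~\ref{ssy-delta-stable}) close the argument. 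In short, the $\mu$-bubble construction is needed for~\ref{thm:itSSYdelta} just as much as for~\ref{thm:itProper}, and the threshold $\delta>\tfrac{n(n-2)}{4(n-1)}$ enters only at the final SSY step, after area growth has been obtained by other means.

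Your outline for~\ref{thm:itProper} is closer in spirit but still diverges from the paper in two respects. First, the conformal metric is not $u^{2/(n-1)}g$ but the Gulliver--Lawson metric $\tilde g=r^{-2}g$, which turns $\mathbb{R}^{n+1}\setminus\{0\}$ into the cylinder $\mathbb{R}\times S^n$; the positive Jacobi-type eigenfunction $u$ solving~\eqref{conformaljacobi} then enters only as the \emph{weight} $u^k$ in the $\mu$-bubble functional $\mathcal{A}_k$, not as an additional conformal factor. Second, the conclusion does not come from a direct diameter contradiction: the $\mu$-bubble boundary $\Sigma$ has bounded $\tilde g$-area (Theorems~\ref{diameterandvolumeboundn=3} and~\ref{diameterandvolumeboundn=4}), hence $g$-area of order $R^{n-1}$, and Brendle's isoperimetric inequality then yields the Euclidean volume growth of Proposition~\ref{thm:deriveAreaGrowth}\ref{prop:itEuclidean}; flatness follows from Corollary~\ref{thm:mainAreaGrowth}. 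No Frankel-type single-end reduction is used---simple connectivity bounds the number of components of $\partial\Omega$ by the number of ends, which is already finite by hypothesis.
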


Since the $n$-dimensional catenoid is $\frac{n-2}{n}$-stable, the range for $\delta$ is optimal. This theorem improves some results regarding $\delta$-stability in \cite{chodoshliR4}.
\begin{remark}
    For \ref{thm:itProper} in Theorem \ref{delta-stable}, the requirement for properness is essential because we need to derive volume growth results using Euclidean balls rather than intrinsic geodesic balls so that Corollary \ref{thm:mainAreaGrowth} can be applied. Although we have flatness conclusion (see Proposition \ref{ssy-delta-stable} below)  if we assume $\mathcal{H}^n(B^M_R(p))\le CR^n$ for some $p\in M$ instead of \eqref{eq:areaGrowthCondition} in Corollary \ref{thm:mainAreaGrowth}, where $B^M_R(p)$ denotes the intrinsic geodesic ball on $M$,
we do not know whether this is true for the optimal range of $\delta$, i.e., $\delta>\frac{n-2}{n}$.
\end{remark}

Notice that $M$ stated in Corollary \ref{thm:mainAreaGrowth} is automatically proper. As mentioned in above remark, we need to prove a generalized Bernstein theorem for  $\delta$-stable minimal hypersurface in $\mathbb{R}^4$ and $\mathbb{R}^5$ with intrinsic volume growth condition.  This can be done by adopting the approach of Schoen-Simon-Yau \cite{Schoen1975curvature}, although this does not give us the optimal range of $\delta.$

\begin{proposition}\label{ssy-delta-stable}
     Let $n=3,4$ and $\delta>\frac{n(n-2)}{4(n-1)}$. Suppose $M$ is a complete, two-sided, $\delta$-stable minimal hypersurface in $\mathbb{R}^{n+1}$, $0\in M$, satisfying the intrinsic volume growth
     \[\mathcal{H}^{n}(M\cap B^{M}_R(0))\le \Lambda R^n\]
     for some $\Lambda\in(0,\infty)$. Then $M$ is flat.
     
\end{proposition}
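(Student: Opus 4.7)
The plan is to adapt the Schoen-Simon-Yau integral estimate to $\delta$-stability, using intrinsic geodesic cutoffs in place of Euclidean ones. Testing $\delta$-stability \eqref{eq:deltaStability} with $\varphi = |A|^{1+q}\eta$, expanding $|\nabla\varphi|^2$, and combining with the refined Kato inequality $|\nabla A|^2 \geq (1+2/n)|\nabla|A||^2$ together with Simons' identity $|A|\Delta|A| + |A|^4 \geq \tfrac{2}{n}|\nabla|A||^2$ (multiplied by $|A|^{2q}\eta^2$ and integrated by parts) produces
\[
\bigl[\delta(2q+1+\tfrac{2}{n}) - (1+q)^2\bigr]I_1 + 2[\delta-(1+q)]I_2 \le I_4,
\]
where $I_1 = \int |A|^{2q}|\nabla|A||^2\eta^2$, $I_2 = \int |A|^{2q+1}\eta\,\nabla|A|\cdot\nabla\eta$, $I_4 = \int |A|^{2q+2}|\nabla\eta|^2$. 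Provided the coefficient of $I_1$ is strictly positive, Cauchy-Schwarz $|I_2| \leq (I_1 I_4)^{1/2}$ and Young's inequality absorb $I_2$ and yield the SSY-type bound
\[
\int |A|^{2q+4}\eta^2 \leq C(n,\delta,q)\int |A|^{2q+2}|\nabla\eta|^2.
\]
At the critical value $q = (n-4)/2$ the $I_1$-coefficient simplifies to $\delta\cdot\tfrac{(n-1)(n-2)}{n} - \tfrac{(n-2)^2}{4}$, which is strictly positive precisely when $\delta > \tfrac{n(n-2)}{4(n-1)}$; continuity therefore permits a choice of $q$ slightly above $(n-4)/2$, namely $q \in (-1/2,0)$ for $n=3$ and a small $q > 0$ for $n=4$.

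Next, I take $\eta$ to be an intrinsic Lipschitz cutoff with $\eta=1$ on $B^M_R(0)$, $\operatorname{supp}\eta \subset B^M_{2R}(0)$, and $|\nabla\eta| \leq 2/R$. Applying $\delta$-stability to $\varphi = \eta$ gives the baseline bound $\int_{B^M_R}|A|^2 \leq CR^{n-2}$. For $n=3$, H\"older's inequality with the chosen $q \in (-1/2,0)$ gives
\[
\int_{B^M_{2R}} |A|^{2q+2} \le \mathcal{H}^n(B^M_{2R})^{-q}\Bigl(\int_{B^M_{2R}}|A|^2\Bigr)^{q+1} \le CR^{n-2q-2},
\]
and substituting into the SSY-type bound yields $\int_{B^M_R} |A|^{2q+4} \le CR^{n-2q-4}$, a negative power of $R$. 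Since the left-hand side is monotone nondecreasing in $R$, it must vanish identically, forcing $|A|\equiv 0$. For $n=4$, I first apply the SSY-type bound at $q=0$ (admissible since $\delta > 2/3 = n/(n+2)$ makes the $I_1$-coefficient positive at $q=0$) to get the uniform bound $\int_{B^M_R}|A|^4 \le C$; then for small $q > 0$ the interpolation
\[
\int_{B^M_{2R}} |A|^{2q+2} \le \Bigl(\int_{B^M_{2R}}|A|^2\Bigr)^{1-q}\Bigl(\int_{B^M_{2R}}|A|^4\Bigr)^q \le CR^{(n-2)(1-q)}
\]
together with the SSY-type bound yields $\int_{B^M_R} |A|^{2q+4} \le CR^{-2q}$, and again monotonicity forces $|A|\equiv 0$.

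The main obstacle is pinning down the precise threshold $\delta > \tfrac{n(n-2)}{4(n-1)}$: the SSY positivity of the $I_1$-coefficient and the decay requirement $q > (n-4)/2$ needed to drive the final integral to zero reconcile exactly at this threshold. A secondary technical point is the case split $n=3$ versus $n=4$, which reflects the auxiliary $L^4$ bootstrap needed for the interpolation in dimension four and is unnecessary in dimension three, where $q$ can be chosen negative so that $\int |A|^{2q+2}$ interpolates directly between volume and $\int|A|^2$.
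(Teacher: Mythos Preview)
Your proof is correct and follows exactly the approach the paper intends: the paper does not give a detailed argument for this proposition but simply states that it ``can be done by adopting the approach of Schoen--Simon--Yau \cite{Schoen1975curvature},'' and your adaptation of the SSY integral estimate to $\delta$-stability with intrinsic geodesic cutoffs carries this out faithfully. The threshold computation at $q=(n-4)/2$ identifying $\delta>\tfrac{n(n-2)}{4(n-1)}$ and the dimension-dependent bootstrap (direct H\"older for $n=3$ with $q\in(-\tfrac12,0)$, auxiliary $L^4$ bound then interpolation for $n=4$ with small $q>0$) are both correct.
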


\subsection{Idea of the proof of \ref{thm:itSSYdelta}}We show that a complete, two-sided, $\delta$-stable minimal hypersurface in $\mathbb{R}^{4}$ or $\mathbb{R}^5$ satisfies a curvature estimate: \textit{there exists a universal constant $C>0$ such that for any two-sided, $\delta$-stable  minimal immersion $M^n\rightarrow \mathbb{R}^{n+1}$ for $n=3,4$, \[|A_M|d_M(x,\partial M)\leq C.\]}

If $M$ is complete, then $M$ is flat. The proof of curvature estimate uses the standard point-picking argument (\cite{brianwhitenote}, see also \cite{chodoshliR4}). Indeed, let $n=3$ or $4$. If the conclusion of curvature estimate is not true, we could take a sequence of two-sided, $\delta$-stable minimal hypersurfaces $M_i$  and $p_i\in M_i$ such that\[|A_{M_i}|(p_i)d_{M_i}(p_i,\partial M_i)\longrightarrow \infty.\] We select $p_i \in M_i$ such that\[|A_{M_i}|(p_i)d_{M_i}(p_i,\partial M_i)\ge \frac{1}{2}\sup_{p \in M_i}|A_{M_i}|(p)d_{M_i}(p,\partial M_i).\]

By translating and rescaling, we can assume $p_i$ is the origin and $|A_{M_i}|(0)=1$. By the convergence  argument, we can choose a subsequence of $M_i$ such that it converges locally and smoothly to a complete, two-sided, $\delta$-stable minimal immersion $M_\infty\rightarrow \mathbb{R}^n$ with $|A_{M_\infty}|(0)=1$ and $|A_{M_\infty}|\leq 2$. Note that the limit could be nonproper minimal immersion. Consequently, the proof reduces to showing that a complete, two-sided, $\delta$-stable minimal hypersurface $M$ in $\mathbb{R}^n$ with bounded second fundamental form must be flat.

Since $\delta$-stability, completeness, two-sidedness, and boundedness of the second fundamental form  pass to the universal cover, we can assume that $M$ is simply-connected. Note that the universal cover of a proper immersion could be nonproper.
It is well-known that a stable minimal hypersurface  in Euclidean space has only one end, which is nonparabolic \cite{Cao-Shen-Zhu-infinitevolume}. 
Similarly, Cheng and Zhou \cite{chengzhouoneend} showed that an $\frac{n-2}{n}$-stable complete minimal hypersurface in $\mathbb{R}^{n+1}$ either has one end or is a catenoid (two ends) if the norm of the second fundamental form in a geodesic ball with radius $R$, i.e., $B_R^{n+1}(p)$, is $o(\ln R)$ when $n=3$ and $o(\sqrt{R})$ when $n=4$ as $R\rightarrow \infty$.
In our case, the second fundamental form is uniformly bounded and the $\delta$-stability implies $\frac{n-2}{n}$-stability.
Thus we can assume $\delta$-stable minimal hypersurface in $\mathbb{R}^3$ or $\mathbb{R}^4$ has only one end. In conclusion, to prove the curvature estimate, we shall prove the claim:
   \vskip.2cm
   \textit{Let  $n=3,4$ and $\delta>\frac{n(n-2)}{4(n-1)}$. Then a complete, two-sided, simply-connected, $\delta$-stable, minimal hypersurface $M$ with finite ends in $\mathbb{R}^{n+1}$ must be flat.}
   \vskip.2cm

The rest of the proof focuses on obtaining the Euclidean volume growth and intrinsic volume growth of $\delta$-stable minimal hypersurfaces. For this purpose, we prove the following Proposition \ref{thm:deriveAreaGrowth}. Combining it with Corollary \ref{thm:mainAreaGrowth}  and Proposition \ref{ssy-delta-stable} proves the above claim, thus completes the proof of \ref{thm:itSSYdelta} of Theorem \ref{delta-stable}.

\begin{proposition}
	Let  $n=3,4$ and $\delta>\frac{n-2}{n}$. Suppose $M^n$ is a simply-connected,  two-sided, $\delta$-stable,  immersed minimal hypersurface, $0\in M$, with finite ends in $\mathbb{R}^{n+1}$.
	Then there exists a constant $C=C(n,\delta)$ such that following holds:
 \begin{enumerate}[\normalfont(a)]
        \item \label{prop:itEuclidean}
        If $M$ is a properly immersion, then we have 
        \[
		\mathcal{H}^n(\tilde{M}_R)\le C R^n
	\]
	for any $R>0$ where $\tilde{M}_R$ is the connected component of $M\cap B^{n+1}_R(0)$ containing the origin.
        \item \label{prop:itGeodesic}
        We have
        \[
            \mathcal{H}^n(B^M_R(0))\le CR^n
        \]
        for any $R>0$.
    \end{enumerate}
	
	\label{thm:deriveAreaGrowth}
\end{proposition}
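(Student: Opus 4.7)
The plan is to deduce both (a) and (b) via the warped $\mu$-bubble technique of Chodosh--Li, adapted to the $\delta$-stable setting. First I would reduce to analyzing each end of $M$ separately: by assumption $M$ has only finitely many ends, and the complement of a large compact piece decomposes into these ends. Since $\delta$-stability with $\delta>\frac{n-2}{n}$ guarantees, via the Fischer-Colbrie characterization recalled in the Introduction, the existence of a positive function $u$ on $M$ with $\Delta u + \delta|A|^2 u = 0$, this $u$ will serve as a conformal weight in the $\mu$-bubble construction. Simple connectivity and the bounded second fundamental form (which we have after the point-picking argument recalled in Section \ref{sub:_delta_stable_bernstein_theorem_in_r_4_and_r_5_}) ensure that working on $M$ directly is enough.

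On each end $E$, I would fix concentric ``walls'' $\Sigma_1, \Sigma_2$: in part (a) the intersections of $E$ with Euclidean spheres $\partial B^{n+1}_{R_1}(0)$ and $\partial B^{n+1}_{R_2}(0)$, and in part (b) the intrinsic distance spheres $\partial B^M_{R_1}(0)$ and $\partial B^M_{R_2}(0)$; then I minimize a warped prescribed-mean-curvature functional of the form
\[
\mathcal{A}(\Omega) = \int_{\partial^\ast \Omega \cap E} u^{\alpha}\,d\mathcal{H}^{n-1} - \int_{E \cap \Omega} h\,u^{\alpha}\,d\mathcal{H}^n
\]
over Caccioppoli sets $\Omega$ enclosing a neighborhood of $\Sigma_1$ but disjoint from $\Sigma_2$. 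The weight exponent $\alpha=\alpha(n,\delta)$ and the prescribed curvature $h$ (a function of the corresponding distance) are chosen so that (i) a smooth minimizer $\Sigma$ exists and is bounded away from both walls -- this uses $n\le 4$ for regularity and Proposition \ref{thm:deltaCone} for the tangent-cone analysis -- and (ii) the second variation of $\mathcal{A}$ along $\Sigma$, combined with $\delta$-stability of the ambient $M$ applied to test functions supported near $\Sigma$, yields a curvature-controls-distance inequality on $\Sigma$.

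From this inequality, a linear upper bound on the intrinsic diameter of $\Sigma$ in terms of the slab width $R_2-R_1$ follows as in Chodosh--Li. Applying the construction to a nested family of slabs and invoking the coarea formula (against Euclidean distance in (a), leveraging the monotonicity formula for properly immersed minimal hypersurfaces, and against intrinsic distance in (b)) integrates these slab estimates to $\mathcal{H}^n(\tilde M_R)\le CR^n$ and $\mathcal{H}^n(B^M_R(0))\le CR^n$ respectively. Summing over the finitely many ends and absorbing the fixed compact piece into $C$ yields the claimed estimates with $C = C(n,\delta)$.

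The main obstacle I anticipate is the calibration of the exponent $\alpha$ and the prescribed curvature profile $h$: the standard choices used in the stable case $\delta=1$ lose too much when $\delta$ is only just above $\frac{n-2}{n}$, so one must exploit sharper Kato-type inequalities for $|A|$ together with Simons' identity in order to retain enough positivity in the second-variation computation. The threshold $\delta>\frac{n-2}{n}$ is exactly where these inequalities close up in dimensions $n=3,4$, matching the $\frac{n-2}{n}$-stability of the catenoid; verifying that a single calibration simultaneously handles the Euclidean-distance variant needed in (a) and the intrinsic-distance variant needed in (b) is the key technical work.
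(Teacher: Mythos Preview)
Your proposal misses the central mechanism of the paper's proof: the conformal change $\tilde g = r^{-2}g$ on $N = M\setminus\{0\}$, which turns the ambient $\mathbb{R}^{n+1}$ into the cylinder $\mathbb{R}\times\mathbb{S}^n$ with positive curvature. The paper does \emph{not} run the $\mu$-bubble in the induced metric using the Fischer--Colbrie function $u$ with $\Delta u + \delta|A|^2 u = 0$; instead it works in $\tilde g$ with a positive $u$ solving
\[
\tilde\Delta u + \Bigl(\delta r^2|A_N|^2 - \tfrac{n(n-2)}{2} + \tfrac{n^2-4}{4}|dr|^2\Bigr)u = 0.
\]
The extra constant terms coming from the conformal scalar curvature are exactly what produce a strictly positive lower bound (e.g.\ $\frac{4-k}{4}$ when $n=3$, $\frac{1}{3}$ when $n=4$) in the second-variation inequality, and this is what forces uniform $\tilde g$-diameter and $\tilde g$-area bounds on the bubble $\Sigma$. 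In your setup on $(M,g)$, the curvature contribution $\Ric^M(\nu,\nu)$ is controlled only by $|A_M|^2$ with no positive constant, so there is nothing to force a diameter bound on $\Sigma$; the Kato/Simons route you sketch does not supply such a constant either---those inequalities are used in Sections~\ref{sec:curvature_estimate_with_area_conditions}--\ref{sec:tangent_cone_analysis} for the $\varepsilon$-regularity, not here.

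Two further points of divergence. First, the paper does not iterate over a nested family of slabs with coarea; it applies the $\mu$-bubble once to produce $M_0$ with $\partial M_0$ of bounded $\tilde g$-area (simple connectedness bounds the number of components by the number of ends $k$), and then invokes Brendle's sharp isoperimetric inequality for minimal hypersurfaces to convert $|\partial M_0|_g \le (e^{4\sqrt{3}\pi}R)^{n-1}|\partial M_0|_{\tilde g}$ into $|M_0|_g \le C R^n$. Second, your appeal to Proposition~\ref{thm:deltaCone} for the regularity of the minimizer is misplaced: the $\mu$-bubble $\Sigma$ is an $(n-1)$-dimensional hypersurface inside $M^n$ with $n-1\le 3$, so its smoothness follows from classical GMT regularity, not from the classification of $\delta$-stable minimal cones in $\mathbb{R}^{n+1}$.
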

To prove Proposition \ref{thm:deriveAreaGrowth}, we refine the method of Chodosh-Li \cite{chodoshliR4anisotropic} and Chodosh-Li-Minter-Stryker \cite{chodoshliR5}.

\begin{remark}
    We believe that Theorem \ref{delta-stable} holds for $n=5$ as well.
    The main challenge lies in Proposition \ref{thm:deriveAreaGrowth}. Recently, Mazet \cite{mazet} proved the stable Bernstein Theorem for stable minimal hypersurface in $\mathbb{R}^6$. Using the same technique, we find that when $\delta>\delta_0$ for a real number $\delta_0\approx 0.811\cdots$, we can obtain Euclidean volume growth and establish the corresponding $\delta$-stable Bernstein theorem for $n=5$.
 However, $\delta_0$ is far from the optimal value $\frac{3}{5}$, so we will not delve into further details.
 Not that such $\delta_0$ can imply Theorem \ref{delta-stable} \ref{thm:itSSYdelta} holds for $n=5$.
\end{remark}

\begin{remark}
    After we finished this work, we were informed by Zhenxiao Xie that he obtained \ref{thm:itSSYdelta} of Theorem \ref{delta-stable}
    when $n=3$ for  $\delta>\frac{13}{32}(>\frac{3}{8})$ using a different method.
\end{remark}

\begin{remark}
    One open question is whether extra assumptions in Theorem \ref{delta-stable} are redundant.
\end{remark}

\subsection{Critical case} Lastly, we consider the critical case, where $\delta=\frac{n-2}{n}$.
Previously, it was mentioned that $M$ is either flat or a catenoid under certain curvature growth condition \eqref{tamzhoutotalcurvaturegrowthcondition} if $M$ is $\frac{n-2}{n}$-stable (cf. \cite{tamzhou,chengzhouoneend}).
We conjecture that if $M$ is $\frac{n-2}{n}$-stable, then $M$ is either flat or a catenoid for $3\le n\le 5$.
During the proof of Theorem \ref{thm_regularity_and_compactness_theorem}, we found another partial solution to this conjecture for $n = 3$.

\begin{theorem}
Suppose $M$ is a complete two-sided $\frac{1}{3}$-stable minimal hypersurface in $\mathbb{R}^{4}$ with the following intrinsic volume growth estimate
\begin{equation}
    \mathcal{H}^n(B_R^M(p))\le \Lambda R^n
\end{equation}
for every $R>0$ and some $p\in M$.
If $M$ is contained in the region bounded by two parallel hyperplanes, then $M$ is either a catenoid or a hyperplane.
	\label{thm:catenoidMain}
\end{theorem}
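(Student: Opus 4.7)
The plan is to leverage the slab condition via a blow-down, using the regularity theory at the critical stability threshold developed earlier in the paper. First I would set up the basic functions: $h := x_4|_M$ is bounded by the slab hypothesis ($|h| \le C_0$) and $\Delta_M h = 0$ by minimality, while the translation Jacobi field $v := \langle e_4, \nu \rangle$ satisfies $\Delta v + |A|^2 v = 0$ together with $v^2 + |\nabla h|^2 \equiv 1$. By Fischer--Colbrie, the $\tfrac{1}{3}$-stability produces a positive $u$ with $\Delta u + \tfrac{1}{3}|A|^2 u = 0$. Testing stability against a cutoff on $B^M_{2R}(p)$ and using the intrinsic volume growth yields the Caccioppoli bound $\int_{B^M_R(p)} |A|^2 \le CR^{n-2}$.

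Second, I would upgrade the intrinsic area bound to a Euclidean area bound $\mathcal{H}^n(M \cap B^{n+1}_R(0)) \le CR^n$ following the method of Proposition \ref{thm:deriveAreaGrowth}, simplified here by the slab: since the vertical fluctuation of $M$ is bounded by $2C_0$, the projection $M \to \{x_4=0\}$ is almost isometric on scales where $|A|$ is small, which the Caccioppoli estimate ensures at large scales. With Euclidean area growth in hand, form the blow-down $M_R := R^{-1}M$: these lie in slabs of width $2C_0/R \to 0$ with uniform area density, so a subsequence converges as varifolds to a stationary integral varifold supported in a single hyperplane, necessarily a hyperplane with integer multiplicity $k \ge 1$.

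The proof then splits on $k$. If $k = 1$, each end of $M$ is a single graph over a hyperplane with decay, and a sweep-out by parallel hyperplanes using the harmonic function $h$ forces $M$ to be flat. If $k = 2$, two-sidedness together with the slab condition forces $M$ to have two ends, each asymptotic to the same hyperplane from opposite sides --- exactly the end structure of the $3$-catenoid; a catenoid-uniqueness argument in the spirit of Schoen, adapted to the immersed setting via the intrinsic volume growth, combined with matching the positive kernel $u$ from critical stability to the canonical positive Jacobi-type function on a catenoid furnished by Tam--Zhou \cite{tamzhou}, identifies $M$ with a catenoid. Multiplicities $k \ge 3$ are excluded by combining the $\tfrac{1}{3}$-stability with the sharpness in Proposition \ref{thm:deltaCone}, noting that the density at infinity descends through the varifold limit.

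The main obstacle is that Theorem \ref{thm_regularity_and_compactness_theorem} and the attendant $\varepsilon$-regularity are stated for $\delta > \tfrac{n-2}{n}$ strictly, whereas here $\delta$ equals the critical value; a careful re-examination of the regularity machinery at the threshold --- either by approximation through sub-critical $\delta'$ on scales where the slab enforces extra flatness, or by a direct refinement exploiting that the only candidate tangent varifolds are hyperplanes --- is needed. Adapting the Schoen-type catenoid characterization (classically stated for embedded properly immersed hypersurfaces) to the critically-stable, merely immersed setting is the second delicate point, and is precisely where the intrinsic volume growth hypothesis substitutes for the curvature-decay condition used in Tam--Zhou.
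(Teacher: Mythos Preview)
Your approach has a fundamental gap that you yourself identify but do not resolve: every piece of regularity and compactness machinery developed in this paper (Theorem~\ref{thm_regularity_and_compactness_theorem}, Theorem~\ref{thm:epsilonRegularity}, Proposition~\ref{prop:Alpha}, Proposition~\ref{thm:deriveAreaGrowth}) requires the strict inequality $\delta>\frac{n-2}{n}$. At the critical value $\delta=\frac{1}{3}$ none of these apply, and your suggested fixes---``approximation through sub-critical $\delta'$'' or ``the only candidate tangent varifolds are hyperplanes''---do not work: the stability inequality is not open in $\delta$ from below, and without $\varepsilon$-regularity you have no control on where the blow-down convergence is smooth, so you cannot even begin the multiplicity analysis. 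Moreover, your exclusion of multiplicity $k\ge 3$ via Proposition~\ref{thm:deltaCone} is a non-sequitur (that proposition classifies cones, not planar limits with multiplicity), and the ``Schoen-type catenoid uniqueness'' you invoke for $k=2$ is not available in the immersed, non-embedded, critically-stable setting.

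The paper's proof avoids all of this by a completely different and much more direct route. It never blows down and never invokes compactness. Instead it inserts the test function $(g^\beta-k)^+\varphi$ with $g=\sqrt{1-(\nu\cdot e_4)^2}$ and $\beta=\frac{1}{2}$ into the stability inequality, lets $k\to 0^+$, and obtains
\[
\int_{\{g>0\}} g^{-1}|A|^2\,E\,\varphi^2 \le C\int_M g\,|\nabla\varphi|^2,
\]
where $E=\frac{n-1}{n}|A|^2(1-g^2)-|\nabla g|^2\ge 0$. The slab hypothesis gives $\int_{B^M_R}g^2\le CR^{n-2}$ via the first variation, and then a logarithmic cutoff (annuli $R_0e^j\le r\le R_0e^{j+1}$, $j=1,\dots,m$) forces the right-hand side to be $O(1/m)\to 0$. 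Hence $E\equiv 0$ pointwise wherever $g>0$ and $|A|>0$, which by an elementary algebraic computation pins down the second fundamental form to be that of a rotation hypersurface; the do~Carmo--Dajczer classification and unique continuation then identify $M$ as a catenoid (the product case $M'\times\mathbb{R}$ being excluded by the slab). The critical value $\delta=\frac{1}{3}$ enters precisely as the threshold at which the coefficient in front of $E$ remains nonnegative, so no strict inequality is needed.
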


\vskip.2cm
The rest of this paper is organized as follows.
In Section \ref{sec:notation}, we introduce the notation used throughout the paper and present some fundamental concepts regarding varifolds.
In Sections \ref{sec:curvature_estimate_with_area_conditions} and \ref{sec:tangent_cone_analysis}, we prove Theorem \ref{thm_regularity_and_compactness_theorem}.
In Section \ref{sec:curvature_estimate_with_area_conditions}, we establish the $L^\infty$ estimate for the second fundamental form (Theorem \ref{thm:epsilonRegularity}) given that the $L^{2\alpha}$ norm of the second fundamental form is small.
In Section \ref{sec:tangent_cone_analysis}, we classify $\delta$-stable minimal cones (Proposition \ref{thm:deltaCone}) and use the standard cone analysis arguments to complete the proof of Theorem \ref{thm_regularity_and_compactness_theorem}.
In Section \ref{sec:area_growth_estimate}, we consider the conformal metric for $\delta$-stable minimal hypersurfaces in $\mathbb{R}^4$ and $\mathbb{R}^5$ and further divide it into two subsections to construct the $\mu$-bubbles under conformal metric for $n=3$ and $n=4$ separately.
Then, we obtain the desired volume growth estimate for $\delta$-stable minimal hypersurfaces (Proposition \ref{thm:deriveAreaGrowth}).
Finally, we prove Theorem \ref{thm:catenoidMain} in Section \ref{sec:critical_case}.

\subsection{Acknowledgements}
We would like to thank Costante Bellettini and Chao Li for their valuable comments on our draft and for clarifying aspects of their papers. The authors would like to thank Zhenxiao Xie for his interest. The first author is supported by Beijing Jiaotong University Science grant No. YA24XKRC00010. The second author is supported by NSFC Grant No. 11831005.
    
\section{Notation and definitions}%
\label{sec:notation}

We first introduce some notations and concepts.

We denote $x=(x_1,x_2,\cdots ,x_{n+1})$ as the point in $\mathbb{R}^{n+1}$ and, unless otherwise stated,  use $\left\{ e_1,e_2,\cdots,e_{n+1} \right\}$ to represent the standard orthonormal basis in $\mathbb{R}^{n+1}$. Let $B^N_r(x)$ represent the open ball in $\mathbb{R}^{N}$ with radius $r$ and center $x$. For any $y \in \mathbb{R}^{n+1}$ and $r>0$, $\eta_{y,r}:\mathbb{R}^{n+1}\rightarrow \mathbb{R}^{n+1}$ denotes the map $\eta_{y,r}(x)=\frac{1}{r}(x-y)$.

We say $M^n$ is a \textit{smooth immersed hypersurface} in an open subset $U\subset \mathbb{R}^{n+1}$ if $M$ is an $n$-dimensional manifold and there exists a smooth immersion $\iota:M\rightarrow U$.
In general, we will identify $M$ with its image $\iota(M)$ if there is no confusion.

Given a smooth immersed hypersurface $M$ in $U$ (or just a subset of $U$), for any $x \in \bar{M}\cap U$, we say $x$ is a \textit{regular point} of $M$ if there exists a number $r>0$ such that $B^{n+1}_r(x)\subset U$ and $\bar{M}\cap B^{n+1}_r(x)$ can be written as a union of finitely many smooth, compact, connected, embedded hypersurfaces $\Sigma_i$ in $B^{n+1}_r(x)$ such that $\bar{\Sigma}_i\cap B^{n+1}_r(x)=\Sigma_i$.
In general, we redefine $M$ such that each point in $M$ is a regular point and every regular point of $M$ lies in $M$.

The (interior) \textit{singular set} of $M$ is defined by
\[
	\mathrm{sing}M=(\bar{M}\backslash M)\cap U.
\]

\begin{remark}
    If $M$ is a complete minimal hypersurface immersed in $\mathbb{R}^{n+1}$ satisfying the Euclidean volume growth condition \ref{eq:areaGrowthCondition}, then $M$ is a properly immersed hypersurface (e.g., see the proof of Lemma 4 in \cite{Tysk1989finitenessIndex}).
    Hence, $\mathrm{sing} M=\emptyset$.
\end{remark}

Denote $\nu$ as the unit normal vector field of $M$. The second fundamental form of $M$ at $x \in M$ is defined by
\[
	A(X,Y)= -D_XY\cdot \nu
\]
for any $X,Y\in T_xM$, where $D$ represents the standard covariant derivative in $\mathbb{R}^{n+1}$ and $X\cdot Y$ represents the standard inner product in $\mathbb{R}^{n+1}$.

We say $M$ is \textit{minimal} if it is a critical point of the area functional for any compactly supported variation $\iota_t:M\rightarrow \mathbb{R}^{n+1}$ of $\iota$.
This is equivalent to the mean curvature $H$ of $M$ being zero.

Given $\delta>0$, we say a smooth immersed hypersurface $M$ is $\delta$-stable if, for any smooth function $\varphi$ with compact support defined on $M$ (not on image), we have
\[
	\delta \int_{ M} |A|^2\varphi^2\le \int_{ M} |\nabla \varphi|^2.
\]

Note that $1$-stable is equivalent to the usual stable condition.
That is, $M$ is 1-stable if and only if
\[
	\left.\frac{d^2}{dt^2}\right|_{t=0}\mathcal{H}^n(\iota_t(M\cap K))\ge 0
\]
for any smooth variation $\iota_t:M\rightarrow \mathbb{R}^{n+1}$ of $\iota$ supported in a compact subset $K\subset \subset M$.

We also present some basic definitions and facts about varifolds.
Readers can refer to \cite{Allard1972,simon1983lectures} for more details. 

An \textit{$n$-varifold} $V$ in $\mathbb{R}^{n+1}$ is a Radon measure on $\mathbb{R}^{n+1}\times G(n+1,n)$ where $G(n+1,n)$ is the set of all $n$-dimensional subspaces in $\mathbb{R}^{n+1}$ (the Grassmannian).
The weight measure $\|V\|$ of an $n$-varifold $V$ is defined by
\[
	\|V\|(A)=V(A \times G(n+1,n))
\]
for any Borel subset $A \subset \mathbb{R}^{n+1}$.
Hence, $\|V\|$ is a Radon measure on $\mathbb{R}^{n+1}$.
The support of $\|V\|$, denoted by $\mathrm{spt}\|V\|$, is defined by
\[
	\mathrm{spt}\|V\|=\left\{ x \in \mathbb{R}^{n+1}: \|V\|(B^{n+1}_r(x))>0 \text{ for any }r>0 \right\}.
\]
The ($n$-dimensional) density of $\|V\|$ at $x$ is defined by
\[
	\Theta(\|V\|,x)=\lim_{r\rightarrow 0^+}\frac{\|V\|(B^{n+1}_r(x))}{\omega_nr^n},
\]
if the limit exists, where $\omega_n$ is the volume of the unit ball in $\mathbb{R}^n$.

For any proper smooth immersed hypersurface $M$, we denote $|M|$ as the $n$-varifold associated with $M$.

We say $V$ is \textit{stationary} in $U$ if, for any $C^1$ variation $F_t:U\rightarrow U$ supported on $K\subset \subset U$ with $F_0$ being the identity map, we have
\[
	\left.\frac{d}{dt}\right|_{t=0}\|(F_t)_{\#}V\|(K)=0.
\]
where $(F_t)_{\#}V$ is the pushforward of $V$ by $F_t$.
The tangent cone of an $n$-varifold $V$ at $x$, denoted as $\mathrm{VarTan}(V,x)$, is defined as follows:
\[
\mathrm{VarTan}(V,x):=
\{V':V'=\lim_{i\to \infty}(\eta_{x,\rho_i})_\#V\text{ for some }\rho_i\to 0^+
\}.
\]

We say a sequence of immersed hypersurfaces $M_k$ converges to an $n$-varifold $V$ in the sense of varifolds if $|M_k|$ converges to $V$ in the sense of Radon measures. At last, for any $n$-varifold defined on $U$, $\mathrm{sing}\|V\|$ denotes the singular set of $\mathrm{spt}\|V\|$ in $U$.
\section{$L^\infty$ estimate of the second fundamental form}%
\label{sec:curvature_estimate_with_area_conditions}


One of the key ingredients in the proof of Theorem \ref{thm_regularity_and_compactness_theorem} is the following $\varepsilon$-regularity theorem.
\begin{theorem}
	[$\varepsilon$-regularity for $|A|$ under $\delta$-stable condition]
	Let $n\ge 3$ and $\delta>\frac{n-2}{n}$. 
	Suppose $M^n$ is a two-sided $\delta$-stable minimal hypersurface immersed in $B^{n+1}_4(0)$ and the singular set of $M$ satisfies $\bar{n}:=\mathrm{dim}(\mathrm{sing}M)< n-2-\frac{2(n-2)}{n}$.
	Additionally, assume $\mathcal{H}^n(M\cap B^{n+1}_4(0))\le \Lambda$ for some $\Lambda \in (0,+\infty)$.
	Then, for any $\alpha  \in (\frac{n-2}{n},\min \left\{ \delta, \frac{n-\bar{n}-2}{2},1 \right\})$, 
	there exists $\varepsilon=\varepsilon(n,\bar{n},\delta,\alpha,\Lambda) \in (0,1)$ such that if
	\[
		\int_{ B^{n+1}_2(0)\cap M} |A|^{2\alpha}\le \varepsilon,
	\]
	then
	\[
		\sup_{B^{n+1}_{\frac{1}{2}}(0)\cap M}|A|^{2\alpha}\le C \int_{ B^{n+1}_2(0)\cap M} |A|^{2\alpha}
	\]
	for some constant $C=C(n,\bar{n},\delta,\Lambda,\alpha)$.
	\label{thm:epsilonRegularity}
\end{theorem}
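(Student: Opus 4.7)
The proof plan follows the strategy of Schoen–Simon–Yau, refined as in Bellettini \cite{bellettini}. The starting point on the regular part of $M$ is Simons' identity combined with the refined Kato inequality for minimal hypersurfaces,
\[
|A|\Delta|A| \geq -|A|^4 + \frac{2}{n}|\nabla|A||^2.
\]
I would test the $\delta$-stability inequality \eqref{eq:deltaStability} against $\varphi = |A|^\alpha \eta$, where $\eta$ is a smooth cutoff supported in $B^{n+1}_2(0)\setminus \mathrm{sing}\, M$. Expanding $|\nabla\varphi|^2$ and pairing the result against the Simons inequality (multiplied by $|A|^{2\alpha-2}\eta^2$ and integrated by parts) produces an algebraic identity, whose cross terms in $\nabla|A|\cdot\nabla\eta$ can be absorbed by Cauchy–Schwarz. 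The target is a Caccioppoli-type estimate
\[
\int_M |A|^{2\alpha+2}\eta^2 + \int_M |\nabla|A|^\alpha|^2\eta^2 \leq C(n,\delta,\alpha)\int_M |A|^{2\alpha}|\nabla\eta|^2.
\]
Positivity of the resulting coefficients reduces to a quadratic inequality in $\alpha$ whose admissible range, under $\delta > \frac{n-2}{n}$, is precisely $\alpha \in (\frac{n-2}{n}, \delta)$; this is where the threshold $\frac{n-2}{n}$ appears on both sides of the stated range.

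The second step combines this with the Michael–Simon Sobolev inequality applied to $|A|^\alpha \eta$,
\[
\left(\int_M (|A|^\alpha\eta)^{\frac{2n}{n-2}}\right)^{\frac{n-2}{n}} \leq C \int_M |\nabla(|A|^\alpha \eta)|^2,
\]
yielding a reverse Hölder-type inequality of the form $\||A|^{2\alpha}\|_{L^{n/(n-2)}(B_r\cap M)} \leq Cr^{-2}\||A|^{2\alpha}\|_{L^1(B_{2r}\cap M)}$. Combined with the smallness hypothesis $\int_{B_2^{n+1}(0)\cap M}|A|^{2\alpha}\leq \varepsilon$, a De Giorgi–Stampacchia iteration on the super-level sets $\{|A|^{2\alpha} > k\}$ (equivalently, Moser iteration for the subsolution $|A|$ of the Simons differential inequality) then upgrades this to the pointwise bound
\[
\sup_{B^{n+1}_{1/2}(0)\cap M}|A|^{2\alpha} \leq C\int_{B^{n+1}_2(0)\cap M}|A|^{2\alpha},
\]
where $\varepsilon$ is chosen small enough so that the iteration constants form a convergent scheme. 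The area hypothesis $\mathcal{H}^n(M \cap B^{n+1}_4(0))\leq \Lambda$ enters the Vitali coverings and scale-dependent constants used in the iteration.

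The most delicate ingredient is constructing a cutoff near $\mathrm{sing}\, M$: one needs a family $\eta_\tau$ vanishing in shrinking neighborhoods of the singular set, converging to $1$ on the regular part, with $\int_M |A|^{2\alpha}|\nabla\eta_\tau|^2 \to 0$ so that the Caccioppoli estimate passes to the limit. This is exactly where the hypothesis $\bar n < n-2-\frac{2(n-2)}{n}$ is used: it is equivalent to $\frac{n-2}{n} < \frac{n-\bar n -2}{2}$, making the range of $\alpha$ in the theorem non-empty, and for any such $\alpha$ the comparison $2\alpha + \bar n < n-2$ permits a standard capacity construction, covering $\mathrm{sing}\,M$ by balls of radii $r_j$ with $\sum r_j^{\bar n +\gamma}\to 0$, that produces cutoffs whose $L^2$-gradient tested against $|A|^{2\alpha}$ vanishes in the limit. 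The condition $\alpha<1$ ensures $|A|^\alpha \in W^{1,2}_{\mathrm{loc}}$ so that all manipulations are formally valid. I expect the main obstacle to be the algebraic bookkeeping in the first step, where the sharp threshold $\delta > \frac{n-2}{n}$ is pinned down; a secondary technical difficulty is verifying that the cutoff construction near $\mathrm{sing}\, M$ is compatible with the coefficients of the Caccioppoli inequality, which is precisely why the matching upper bound on $\dim(\mathrm{sing}\, M)$ enters the hypotheses.
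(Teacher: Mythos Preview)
Your overall strategy matches the paper's. Two points, however, need correction. The parenthetical that Moser iteration is equivalent to De Giorgi here is false: Moser would require testing stability with $|A|^{p\alpha}\eta$ for growing $p$, but the Caccioppoli algebra only closes when the exponent lies in $(\tfrac{n-2}{n},\delta)$, so the exponent is stuck and cannot be pushed past $\delta$. De Giorgi avoids this by fixing $\alpha$ and varying the truncation level instead, which is why the paper (following Bellettini) adopts it.

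The actual gap is that your displayed Caccioppoli is only the level-$0$ case. For De Giorgi you must test with $\varphi=(|A|^\alpha-k)^+\eta$ from the outset, and for $k>0$ the quantity $\alpha u^2-\delta(u-k)^2$ (with $u=|A|^\alpha$) arising from the stability--Simons combination is no longer nonpositive; it produces an extra nonlinear remainder
\[
Ck^2\int_{\{u>k\}}\Bigl((u-k)^{2/\alpha}+k^{2/\alpha}\Bigr)\eta^2
\]
on the right-hand side of the level-$k$ inequality. Absorbing this term through the iteration is the core technical step and succeeds only because $\tfrac{2}{\alpha}<\tfrac{2n}{n-2}$, so the threshold $\alpha>\tfrac{n-2}{n}$ enters a second time here, not just in the sign of the gradient coefficient. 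A minor further omission: the singular-set cutoff requires a preliminary bound $\int_{M\cap B_\rho}|A|^2\le C\rho^{n-2}$ (from stability, using only $\mathcal{H}^{n-2}(\mathrm{sing}\,M)=0$) so that H\"older against this $L^2$ control can estimate $\int|A|^{2\alpha}|\nabla\zeta_\varepsilon|^2$; your outline skips this intermediate step.
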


For simplicity, we write $u=|A|^\alpha$.
We need the following lemma (weak (intrinsic) Caccipoppli inequality).

\begin{lemma}	
	For any $\alpha \in (\frac{n-2}{n},\min \left\{ \delta, \frac{n-\bar{n}-2}{2} ,1\right\})$, and any locally Lipschitz function $\phi$ supported in $B^{n+1}_3(0)$, we have
	\begin{align}
		\int_{M\cap \{u>k\}} \left( 1-\frac{k}{u} \right){}&|\nabla u|^2\phi^2\le
		C\int_{M\cap \{u>k\}} (u-k)^2|\nabla \phi|^2\nonumber\\+{}&Ck^2\int_{ M\cap \{u>k\}} \left( (u -k)^{\frac{2}{\alpha}}+k^{\frac{2}{\alpha}} \right) \phi^2,
		\label{eq:lemWeakCacci}
	\end{align}
	where the constant $C=C(n,\bar{n}, \delta,\Lambda,\alpha)$.
	\label{lem:weakPoincare}
\end{lemma}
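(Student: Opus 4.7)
The plan is to combine a Simons-type differential inequality for $u = |A|^\alpha$ with the $\delta$-stability inequality under carefully chosen test functions, then absorb using the assumption $\alpha < \delta$. As a first step I would establish the pointwise inequality
\[
u\Delta u + \alpha u^2 |A|^2 \ge \beta\, |\nabla u|^2, \qquad \beta = \frac{\alpha + 2/n - 1}{\alpha},
\]
which follows from Simons' identity $|A|\Delta|A| = |\nabla A|^2 - |\nabla|A||^2 - |A|^4$ together with the refined Kato inequality $|\nabla A|^2 \ge (1+2/n)|\nabla|A||^2$ on two-sided minimal hypersurfaces, after applying the chain rule to $u = |A|^\alpha$. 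The assumption $\alpha > \frac{n-2}{n}$ is exactly what makes $\beta > 0$.

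Next, I multiply this inequality by $(1 - k/u)_+\phi^2$ and integrate over $M$. Using $u(1-k/u)_+ = (u-k)_+$ and integration by parts on the Laplacian term, followed by Cauchy--Schwarz absorption of the cross term $\int (u-k)_+ \phi \,\nabla u \cdot \nabla \phi$, yields for any small $\tau > 0$
\begin{align*}
(\beta+1-\tau)\int (1-k/u)_+ |\nabla u|^2\phi^2 &+ (1-\tau) k\int \frac{|\nabla u|^2}{u}\chi_{\{u>k\}}\phi^2 \\
&\le \alpha\int u(u-k)|A|^2\phi^2 + C_\tau\int (u-k)^2|\nabla\phi|^2.
\end{align*}
To dispose of the bad term $\alpha\int u(u-k)|A|^2\phi^2$, I split $u(u-k) = (u-k)^2 + k(u-k)$ and apply Young, $k(u-k) \le \eta(u-k)^2 + k^2/(4\eta)$. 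The resulting $k^2\int|A|^2\phi^2\chi_{\{u>k\}}$ piece is controlled by the algebraic identity $|A|^2 = u^{2/\alpha}$ together with $u^{2/\alpha} \le C_\alpha((u-k)^{2/\alpha} + k^{2/\alpha})$ on $\{u>k\}$ (valid since $2/\alpha \ge 2$), producing exactly the $k^2$-term in the lemma. For the remaining $(1+\eta)\int (u-k)^2|A|^2\phi^2$ piece, I invoke $\delta$-stability with test function $(u-k)_+\phi$, giving (up to $(1+\epsilon')$ factors)
\[
\delta\int (u-k)^2|A|^2\phi^2 \le \int (1-k/u)_+|\nabla u|^2\phi^2 + k\int \frac{|\nabla u|^2}{u}\chi_{\{u>k\}}\phi^2 + C_{\epsilon'}\int (u-k)^2|\nabla\phi|^2.
\]

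Assembling these estimates, the coefficients of $\int (1-k/u)_+|\nabla u|^2\phi^2$ and $k\int \frac{|\nabla u|^2}{u}\chi_{\{u>k\}}\phi^2$ on the left become $(\beta+1-\tau) - \alpha(1+\eta)(1+\epsilon')/\delta$ and $(1-\tau) - \alpha(1+\eta)(1+\epsilon')/\delta$ respectively; both are strictly positive for $\tau,\eta,\epsilon'$ small, since the hypothesis $\alpha < \delta$ gives $\alpha/\delta < 1 \le \beta+1$. Discarding the nonnegative $k$-integral on the left then yields the stated inequality. The main technical hurdle I anticipate is justifying the integration by parts and the use of $(u-k)_+\phi$ as a test function in the presence of the singular set of $M$ and of the zero set $\{|A|=0\}$, where $u$ fails to be smooth. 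For the former, one approximates $\phi$ by $\phi\cdot \chi_\varepsilon$ where $\chi_\varepsilon$ cuts off near $\mathrm{sing}\,M$ and uses the bound $\alpha < (n-\bar n-2)/2$ to ensure the cutoff contributions vanish as $\varepsilon \to 0$; the latter is handled by a standard regularization, replacing $|A|$ by $(|A|^2 + s)^{1/2}$ and sending $s \to 0$.
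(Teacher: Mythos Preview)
Your proposal is correct and follows essentially the same approach as the paper: both combine Simons' inequality (via the refined Kato inequality, which is where $\alpha>\frac{n-2}{n}$ enters) with the $\delta$-stability inequality applied to the test function $(u-k)^+\phi$, absorb using $\alpha<\delta$, and then justify the computation across the singular set via cutoffs and the hypothesis $\alpha<\frac{n-\bar n-2}{2}$. The only cosmetic difference is the packaging of the algebra: the paper computes $\int|\nabla((u-k)^+\phi)|^2$ directly and bounds the resulting combination $u^{2/\alpha}\bigl(\alpha u^2-\delta(u-k)^2\bigr)$ in one stroke, whereas you separate the Simons step from the stability step and split $u(u-k)=(u-k)^2+k(u-k)$, but these are rearrangements of the same estimate.
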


\begin{proof}
    At first, we show that \eqref{eq:lemWeakCacci} holds for $\phi$ which is a bounded locally Lipschitz function with compact support in $B^{n+1}_4(0)$, vanishing in a neighborhood of $\mathrm{sing}M \cap B^{n+1}_4(0)$ and any $\alpha \in (\frac{n-2}{n},\min\{\delta,1\})$.
    
For such $\phi$ and $\alpha$, we choose $\varphi = (|A|^\alpha-k)^+ \phi$ for $k\ge 0$.
Note that we can verify $((|A|^\alpha-k)^+)^2 \in C^1(M)\cap W^{2,\infty}_{\mathrm{loc}}(M)$.
Hence, we can insert such $\varphi$ into the $\delta$-stability inequality \eqref{eq:deltaStability}.

We observe that
\begin{align*}
\frac{1}{2}\Delta(|A|^\alpha-k)^2={}&\alpha\left( 1-\frac{k}{|A|^{\alpha}} \right)|A|^{2\alpha-2}|A|\Delta|A|\\
+&\alpha \left( \left( 1-\frac{k}{|A|^\alpha} \right)(\alpha-1)+\alpha \right)|A|^{2\alpha-2}|\nabla|A||^2.
\end{align*}
Using Simon's inequality
\[
	|A|\Delta |A|\ge\frac{2}{n}|\nabla|A||^2-|A|^4,
\]
then we obtain
\begin{align*}
	&\int_{M} |\nabla \varphi|^2\\={} & \int_{M_{>k}} \alpha^2|A|^{2\alpha-2}|\nabla|A||^2\phi^2+((|A|^\alpha-k)^2|\nabla \phi|^2+\frac{1}{2}\left< \nabla (|A|^\alpha-k)^2, \nabla \phi^2 \right> \\
	={} & \int_{M_{>k}} \alpha^2|A|^{2\alpha-2}|\nabla|A||^2\phi^2+((|A|^\alpha-k)^2|\nabla \phi|^2-\frac{1}{2}\phi^2\Delta(|A|^\alpha-k)^2\\
	 ={}& \int_{M_{>k}} ((|A|^\alpha-k)^2)|\nabla\phi|^2
	 -\int_{M_{>k}} \alpha \left( 1-\frac{k}{|A|^\alpha} \right)|A|^{2\alpha-2}|A|\Delta|A|\phi^2\\
  &+\alpha(\alpha-1)\left( 1-\frac{k}{|A|^\alpha} \right)|A|^{2\alpha-2}|\nabla|A||^2\phi^2\\
	 \le{}& \int_{M_{>k}} ((|A|^\alpha-k)^2)|\nabla\phi|^2\\
		  &-\int_{M_{>k}} \alpha\left( \frac{2}{n}+\alpha-1 \right) \left( 1-\frac{k}{|A|^\alpha} \right)|A|^{2\alpha-2}|\nabla|A||^2\phi^2
		  +\int_{M_{>k}} \alpha |A|^{2\alpha+2}\phi^2
\end{align*}
where $M_{>k}$ denotes $M\cap \{|A|^\alpha>k\}$.
On the other hand, by the $\delta$-stability, we have
\[
	\int_{M} |\nabla\varphi|^2\geq \delta \int_{M} |A|^2\varphi^2=\delta \int_{M_{>k}} |A|^{2}(|A|^{\alpha}-k)^2\phi^2. 
\]
Now, we write $\delta_1:=\alpha - \frac{n-2}{n}>0$, $\delta_2:=\delta - \alpha>0$.
Then, from the stability inequality it follows
\begin{align*}
	& \delta_1\int_{M_{>k}} \alpha \left( 1-\frac{k}{|A|^\alpha} \right)|A|^{2\alpha-2}|\nabla|A||^2\phi^2\\
	\le{}& \int_{M_{>k}} ((|A|^\alpha-k)^2)|\nabla \phi|^2
	+\int_{M_{>k}} \alpha |A|^{2\alpha}|A|^2\phi^2\\
		 &-\delta \int_{M_{>k}} |A|^{2}(|A|^{\alpha}-k)^2\phi^2.
\end{align*}
Now, let $u=|A|^\alpha$.
Then,
\begin{equation}
	\frac{\delta_1}{\alpha} \int_{M_{>k}} \left( 1-\frac{k}{u} \right)|\nabla u|^2\phi^2\le \int_{M_{>k}} (u-k)^2|\nabla \phi|^2+\int_{M_{>k}}u^{\frac{2}{\alpha}}\left( \alpha u^2-\delta (u-k)^2 \right) \phi^2.
	\label{eq:pfWeakPoincareExtraTerm}
\end{equation}
Now, we need to estimate $u^{\frac{2}{\alpha}}\left( \alpha u^2-\delta(u-k)^2 \right)$.
\begin{align*}
	\alpha u^2-\delta(u -k)^2={} & -\delta_2 (u -k)^2+2\alpha k(u-k)+\alpha k^2
	\le \frac{\alpha^2k^2}{\delta_2}+\alpha k^2,
\end{align*}
where we have used Cauchy-Schwarz inequality.
By the trivial inequality $(x+y)^a\le 2^a(x^a+y^a)$ for $x,y\ge 0$, $a>0$, we have
\[
	u^{\frac{2}{\alpha}}(\alpha u^2-\delta(u -k)^2)\le 2^{\frac{2}{\alpha}}\left( (u -k)^{\frac{2}{\alpha}}+k^{\frac{2}{\alpha}} \right)k^2\left( \alpha + \frac{\alpha^2}{\delta_2} \right).
\]
Substituting this into inequality \eqref{eq:pfWeakPoincareExtraTerm}, we can get the desired inequality \eqref{eq:lemWeakCacci}.

To complete the proof, we need to show that \eqref{eq:lemWeakCacci} holds for any bounded locally Lipschitz function $\phi$ supported in $B_3(0)$ by an approximation argument if we assume $\alpha \in (\frac{n-2}{n},\min \left\{ \delta,\frac{n-\bar{n}-2}{2},1 \right\})$.
Note that $\phi$ may be non-zero on the singular set of $M$.
Such an argument has been used by Schoen \cite{Schoen1977thesis} and Schoen-Simon \cite{SchoenSimon1981Regularity}.

Let us first get a preliminary estimate on $|A|$.
\begin{lemma}
	\label{lem_boundedLpNorm}
	If $\mathcal{H}^{n-2}(\mathrm{sing}(M)\cap B^{n+1}_4(0))=0$, then we have $|A| \in L^2(B^{n+1}_{\frac{7}{2}}(0)\cap M)$ and 
	\[
		\int_{M\cap B^{n+1}_\rho(x) } |A|^2\le C\rho^{n-2},
	\]
	for any $x \in B^{n+1}_{\frac{7}{2}}(0)$ and $\rho \in (0,\frac{1}{4})$, where $C=C(\delta,\Lambda)$.
\end{lemma}

\begin{proof}
	For each $\varepsilon>0$, we choose balls $\left\{ B^{n+1}_{r_i}(x_i) \right\}_{i=1}^N$ such that $\mathrm{sing}(M)\cap B^{n+1}_4(0)\subset \bigcup_{i=1}^N B^{n+1}_{r_i}(x_i)$ and $\sum_{i=1}^N r_i^{n-2}\le \varepsilon$.
	We choose $\zeta_i$ to be a non-negative $C^1$ function such that $\zeta_i$ is supported outside of $B^{n+1}_{r_i}(x_i)$, $\zeta_i=1$ outside of $B^{n+1}_{2r_{i}}(x_i)$, and $|\nabla \zeta_i|\le \frac{2}{r_i}$.
	Then, we define $\zeta_\varepsilon=\min_{1\le i\le N}\zeta_i$.
	We insert $\zeta_\varepsilon \phi$ into the $\delta$-stability inequality where $\phi$ is a non-negative locally Lipschitz function with compact support in $B^{n+1}_4(0)$.
	Then,
	\[
		\int_{M\cap  B^{n+1}_4(0)} |A|^2\phi^2\zeta_\varepsilon^2\le \frac{2}{\delta}\int_{M\cap  B^{n+1}_4(0)} \left|\nabla \phi\right|^2\zeta_\varepsilon^2+\frac{2}{\delta}\int_{M\cap  B^{n+1}_4(0)}|\nabla \zeta_\varepsilon|^2\phi^2
	\]
	by Cauchy-Schwarz inequality.
	Note that
	\begin{align*}
		\int_{M\cap  B^{n+1}_4(0)} |\nabla \zeta_\varepsilon|^2\phi^2\le{} & \|\phi\|_{L^\infty(B^{n+1}_4(0))}^2\sum_{i=1}^{N}\int_{M\cap  B^{n+1}_{2r_i}(x_i)} |\nabla \zeta_i|^2\\
		 \le{}& C\|\phi\|_{L^\infty(B^{n+1}_4(0))}^2\sum_{i=1}^{N}r_i^{n-2}\le C\|\phi\|_{L^\infty(B^{n+1}_4(0))}^2\varepsilon
	\end{align*}
	which converges to $0$ as $\varepsilon\rightarrow 0^+$.
	Then, we have
	\[
		\int_{M} |A|^2\phi^2\le \frac{2}{\delta}\int_{M}|\nabla \phi|^2. 
	\]
	In particular, it implies $|A| \in L^2(B^{n+1}_{\frac{7}{2}}(0))$ if we choose $\phi\equiv 1$ on $B^{n+1}_{\frac{7}{2}}(0)$.
	Now, we choose $\phi$ supported on $B^{n+1}_{2\rho}(x)$, and equal to $1$ on $B^{n+1}_\rho(x)$, with $|\nabla \phi|\le \frac{2}{\rho}$.
	Together with the monotonicity formula, we have
	\[
		\int_{M\cap  B^{n+1}_\rho(x)} |A|^2\le \frac{2}{\delta}\int_{M\cap  B^{n+1}_{2\rho}(x)\backslash B^{n+1}_\rho(x)} \frac{1}{\rho^2}\le C\rho^{n-2},
	\]
	for some $C=C(\delta,\Lambda)$.
\end{proof}

The remaining part is similar to the proof of the above lemma.
Based on the assumption of $\bar{n}$ and $\alpha$, we know $\mathcal{H}^{n-2-2\alpha}(\mathrm{sing}M)=0$.
Therefore, for any $\varepsilon>0$, there exist $B^{n+1}_{r_1}(x_1),B^{n+1}_{r_2}(x_2),\cdots , B^{n+1}_{r_N}(x_N)$ with $x_i\in B^{n+1}_{\frac{7}{2}}(0)$ and $0<r_i<\frac{1}{4}$ for each $1\le i\le N$, such that
\begin{equation}
	\mathrm{sing}M\cap B^{n+1}_3(0)\subset \bigcup_{i=1}^N B^{n+1}_{r_i}(x_i),\quad \text{ and }\quad \sum_{i =1}^{N}r_i^{n-2-2\alpha}\le \varepsilon.
	\label{eq:pfSingCover}
\end{equation}
We choose $\zeta_i$ and $\zeta_\varepsilon$ as in the proof of Lemma \ref{lem_boundedLpNorm}.
For any $\phi$ which is locally Lipschitz with compact support in $B^{n+1}_3(0)$, we know $\zeta_\varepsilon \phi$ vanishes near $\mathrm{sing}M$, allowing us to use \eqref{eq:lemWeakCacci} with $\zeta_\varepsilon \phi$ in place of $\phi$.
Thus, we have
\begin{align}
	{} & \int_{M_{>k}} \left( 1-\frac{k}{u} \right)|\nabla u|^2\zeta_\varepsilon^2\phi^2\nonumber \\
	\le 
	{}& C\int_{M_{>k}} (u-k)^2\zeta_\varepsilon^2|\nabla \phi|^2+Ck^2\int_{M_{>k}} \left( (u -k)^{\frac{2}{\alpha}}+k^{\frac{2}{\alpha}} \right) \zeta_\varepsilon^2\phi^2\nonumber \\
	{}& + C \int_{M_{>k}} (u-k)^2|\nabla\zeta_\varepsilon|^2\phi^2,
	\label{eq:pfWeakPoincareApprox}
\end{align}
by the Cauchy-Schwarz inequality.

For the first two terms on the right-hand side of \eqref{eq:pfWeakPoincareApprox}, since $|A|^{2\alpha}$ and $|A|^2$ are integrable in $B^{n+1}_3(0)\cap M$ by Lemma \ref{lem_boundedLpNorm},
we can let $\varepsilon\rightarrow 0^+$, leading to
\[
	C \int_{M_{>k}} (u-k)^2|\nabla \phi|^2+Ck^2\int_{M_{>k}} \left( (u -k)^{\frac{2}{\alpha}}+k^{\frac{2}{\alpha}} \right) \phi^2.
\]
Then, we need to show
\[
	\lim_{\varepsilon\rightarrow 0^+} \int_{M_{>k}} (u-k)^2|\nabla\zeta_\varepsilon|^2\phi^2=0.
\]
Applying Lemma \ref{lem_boundedLpNorm}, \eqref{eq:pfSingCover}, and H\"older inequality, we obtain
\begin{align*}
	{} & \int_{M_{>k}} (u-k)^2|\nabla \zeta_\varepsilon|^2\phi^2\\
	\le 
	{}& 
	\sum_{i=1}^{N}\|\phi\|_{L^\infty(B^{n+1}_3(0))}^2\int_{M\cap   B^{n+1}_{r_i}(x_i)} |A|^{2\alpha}|\nabla \zeta_i|^2\\
	\le{}& \|\phi\|_{L^\infty(B^{n+1}_3(0))}^2\sum_{i=1}^{N}\left( \int_{M\cap  B^{n+1}_{r_i}(x_i)} |A|^2 \right)^{\alpha}\left( \int_{M\cap  B^{n+1}_{r_i}(x_i)} |\nabla \zeta_i|^{\frac{2}{1-\alpha}} \right)^{1-\alpha}\\
	\le 
	{}& 
	C\|\phi\|_{L^\infty(B^{n+1}_3(0))}^2\sum_{i=1}^{N}r_i^{\alpha(n-2)}r_i^{\left(n-\frac{2}{1-\alpha}\right)(1-\alpha)}=C\|\phi\|_{L^\infty(B^{n+1}_3(0))}^2\sum_{i=1}^{N}r_i^{n-2-2\alpha}\\
	\le 
	{}& C\|\phi\|_{L^\infty(B^{n+1}_3(0))}^2\varepsilon.
\end{align*}
Hence, we can take $\varepsilon\rightarrow 0^+$ to get \eqref{eq:lemWeakCacci} holds for any bounded locally Lipschitz function $\phi$ supported in $B^{n+1}_3(0)$.
\end{proof}
\begin{remark}
	From the above proof, we can easily see that $|\nabla u|^2$ is integrable in $B^{n+1}_3(0)\cap M$ and hence $u\in W^{1,2}(B^{n+1}_3(0)\cap M)$.
\end{remark}
We now prove the Theorem \ref{thm:epsilonRegularity}.
\begin{proof}
	[Proof of Theorem \ref{thm:epsilonRegularity}]
Consider
\[
	k_l=d\left( 1 - \frac{1}{2^{l-1}} \right),\quad \text{and}\quad R_l=\frac{1}{2}+\frac{1}{2^l},
\]
for $0<d\le 1$. $k_l$ increases to $d$ and $R_l$ decreases to $1/2$ as $l\rightarrow \infty.$
For simplicity, we write $\Omega_l = M\cap \left\{ u>l \right\}\cap B^{n+1}_{R_l}(0)$.

Applying the previous lemma and noting that
\[
	1-\frac{k_l}{u}\ge 1-\frac{k_l}{k_{l+1}}\ge \frac{1}{2^l},
\]
for any $u>k_{l+1}$, we have
\begin{align*}
	\frac{1}{2^l}\int_{ M_{>k_{l+1}}} |\nabla u|^2 \phi^2\le{}&
	C\left[ \int_{ M_{>k_{l}}} (u-k_{l})^2|\nabla \phi|^2\right.\\
&\left.+d^2\int_{ M_{>k_{l}}}  (u -k_{l})^{\frac{2}{\alpha}}\phi^2+
	d^{2+\frac{2}{\alpha}}\int_{ M_{>k_l}}\phi^2 \right] .
\end{align*}
Use the fact
\[
	|\nabla((u-k_{l+1})\phi)|^2\le 2|\nabla u|^2\phi^2+2(u-k_{l+1})^2|\nabla \phi|^2,
\]
we obtain
\begin{align*}
	{} & \int_{ M_{>k_{l+1}}} |\nabla ((u-k_{l+1})\phi)|^2\\
	\le 
	{}& 2^l C\left[ \int_{ M_{>k_{l}}} (u-k_{l})^2|\nabla \phi|^2+d^2\int_{ M_{>k_{l}}}  (u -k_{l})^{\frac{2}{\alpha}}\phi^2+
	d^{2+\frac{2}{\alpha}}\int_{ M_{>k_l}}\phi^2 \right] .
\end{align*}
Now, we choose $\phi$ supported in $B^{n+1}_{R_l}(0)$ and $\phi=1$ on $B^{n+1}_{R_{l+1}}(0)$, and $|\nabla \phi|\le 2^{l+2}$ with $0\le \phi \le 1$.
Together with Michael-Simon's inequality
\[
		\left( \int_{M} |\varphi|^{\frac{2n}{n-2}} \right)^{\frac{n-2}{n}}\le C \int_{M} \left|\nabla \varphi\right|^2,
	\]
for a constant $C$ only depending on $n$.
Then, we have
\begin{align}
	{} & \left( \int_{ \Omega_{l+1}} (u-k_{l+1})^{\frac{2n}{n-2}} \right)^{\frac{n-2}{n}}\nonumber \\
	\le 
	{}& C^l\left[ \int_{ \Omega_l} (u-k_{l})^2+d^2\int_{ \Omega_l}  (u -k_{l})^{\frac{2}{\alpha}}+
	d^{2+\frac{2}{\alpha}}\mathcal{L}^n(\Omega_l) \right].
	\label{eq:pfIterEst}
\end{align}


Using the fact that when $u\ge k_l$, we know $u-k_{l-1}\ge \frac{d}{2^{l-1}}$. Hence, for any $0\le \beta\le \frac{2n}{n-2}$,
\begin{align*}
	\int_{ \Omega_l} (u-k_l)^\beta\le{} & \int_{ \Omega_l}(u-k_l)^\beta \left( \frac{2^{l-1}}{d} \right)^{\frac{2n}{n-2}-\beta}(u-k_{l-1})^{\frac{2n}{n-2}-\beta}\\
	 \le{}& \frac{C^l}{d^{\frac{2n}{n-2}-\beta}}\int_{ \Omega_{l-1}} (u-k_{l-1})^{\frac{2n}{n-2}},
\end{align*}
where constant $C=C(n)$.
Note that since $\frac{2}{\alpha}< \frac{2n}{n-2}$, we can use the above inequality with $\beta=0,2$, and $\frac{2}{\alpha}$ in \eqref{eq:pfIterEst} to obtain
\begin{equation}
	S_{l+1}^{\frac{n-2}{n}}\le C^l\left( \frac{1}{d^{\frac{2n}{n-2}-2}}+\frac{1}{d^{\frac{2n}{n-2}-\frac{2}{\alpha}-2}}+\frac{1}{d^{\frac{2n}{n-2}-2-\frac{2}{\alpha}}} \right)S_{l-1},
	\label{eq:pfIterRaw}
\end{equation}
where
\[
	S_l:= \int_{\Omega_l}  (u-k_{l})^{\frac{2n}{n-2}}.
\]
Using $d\le 1$, \eqref{eq:pfIterRaw} implies
\begin{equation}
		\frac{S_{l+1}}{d^{\frac{2n}{n-2}}}\le
	C^l \left( \frac{S_{l-1}}{d^{\frac{2n}{n-2}}} \right)^{\frac{n}{n-2}},
	\label{eq:pfIter}
\end{equation}
for some $C=C(n,\bar{n},\delta,\Lambda,\alpha)$.
By iterating \eqref{eq:pfIter}, we have
\[
	\frac{S_{2l+1}}{d^{\frac{2n}{n-2}}}\le
	C^{2+\frac{4(n-2)}{n}+\cdots + 2l (\frac{n-2}{n})^{l-1}}\left( C^2\frac{S_1}{d^{\frac{2n}{n-2}}} \right)^{\left( \frac{n}{n-2} \right)^l}\le C^{\frac{n^2}{2}}\left( C^2\frac{S_1}{d^{\frac{2n}{n-2}}} \right)^{\left( \frac{n}{n-2} \right)^l}.
\]
Hence,
if we require
\[
	S_1\le (\varepsilon' d)^{\frac{2n}{n-2}},
\]
for some positive $\varepsilon'$ only depending on $n$, $\bar{n}$, $\delta$, $\Lambda$, and $\alpha$,
then we have $\lim_{l\rightarrow \infty} S_{2l+1}=0$.
This implies
\[
	\left|A\right|^\alpha\le d \text{ on }B^{n+1}_{\frac{1}{2}}(0).
\]

Lastly, we need to ensure $S_1\le (\varepsilon' d)^{\frac{2n}{n-2}}$.

Using Lemma \ref{lem:weakPoincare} with $k=0$ and a suitable test function, we obtain
\[
	\int_{M\cap  B^{n+1}_{\frac{3}{2}}(0)} |\nabla u|^2 \le
	C\int_{M\cap  B^{n+1}_2(0)}u^2
\]
for some $C=C(n,\bar{n},\delta,\Lambda,\alpha)$.
Thus, by Michael-Simon's inequality, we have
\begin{equation}
	S_1^{\frac{n-2}{n}}\le C \int_{M\cap  B^{n+1}_1(0)} \left|\nabla (u\varphi)\right|^2 \le C \int_{M\cap  B^{n+1}_2(0)} u^2=C\int_{M\cap  B^{n+1}_2(0)} |A|^{2\alpha}.
	\label{eq:pfMSLastEst}
\end{equation}
for $\varphi$ supported on $B^{n+1}_{\frac{3}{2}}(0)$, equal to $1$ on $B^{n+1}_1(0)$, and $|\nabla \varphi|\le 4$, where $C=C(n,\bar{n},\delta,\Lambda,\alpha)$.
Now, we choose $\varepsilon=\frac{(\varepsilon')^2}{C}$ where $C$ is the constant in \eqref{eq:pfMSLastEst} and $d =\sqrt{\frac{1}{\varepsilon}\int_{M\cap  B^{n+1}_2(0)\cap M} |A|^{2\alpha}} \in (0,1]$ by the assumption.
Consequently, $S_1^{\frac{n-2}{n}}\le (\varepsilon' d)^2$ holds by \eqref{eq:pfMSLastEst}.
For such a choice of $d$, we know $\lim_{l\rightarrow \infty} S_l=0$, which implies
\[
	\sup_{B^{n+1}_{\frac{1}{2}}(0)\cap M}|A|^{2\alpha}\le d^2 =C \int_{M\cap  B^{n+1}_2(0)\cap M} |A|^{2\alpha}
\]
for some $C=C(n,\bar{n},\delta,\Lambda,\alpha)$.
\end{proof}

\section{Tangent Cone Analysis}
\label{sec:tangent_cone_analysis}

\subsection{Classification of $\delta$-stable cone}

In this subsection, we refine the method described in \cite{Li2020bernsteinCone} to demonstrate that a $\delta$-stable minimal cone in $\mathbb{R}^{n+1}$ is flat if $n \geq 2$ and $\delta > \frac{(n-2)^2}{4(n-1)}$.

Consider an $n$-dimensional minimal cone $M$ in $\mathbb{R}^{n+1}$ with the vertex at the origin. Simons' equation (\cite{Schoen1975curvature}, \cite[(2.16)]{Colding2011}) states that,
\begin{equation}
	\frac{1}{2}\Delta|A|^2+|A|^4=\sum_{i,j,k }^{n}A_{ij,k}^2
	\label{eq:SimonsIden}
\end{equation}
where $A$ is the second fundamental form of $M$ and $A_{ij,k}$ is its covariant derivative. We select an orthonormal frame $\left\{ e_1, \ldots, e_n \right\}$ of $M$ such that $e_n = \frac{\partial}{\partial r}$ and $A_{\alpha \beta} = 0$ for $1 \le \alpha < \beta \le n-1$. Since $M$ is a cone, we have $A_{ni} = 0$ and $A_{ij,n} = -\frac{A_{ij}}{r}$ for $1 \le i, j \le n$. Consequently,
\begin{equation}
	\sum_{i,j,k }^{}A_{ij,k}^2=\sum_{\alpha,\beta=1}^{n-1}3A_{\alpha\beta,n}^2+\sum_{\alpha,\beta,\gamma=1 }^{n-1}A_{\alpha\beta,\gamma}^2=\sum_{\alpha=1 }^{n-1}\frac{3A_{\alpha\alpha}^2}{r^2}+\sum_{\alpha,\beta,\gamma=1 }^{n-1}A_{\alpha\beta,\gamma}^2.
	\label{eq:ExpandIICone}
\end{equation}
Recall that
\begin{align}
	|\nabla|A||^2={}& \frac{|\nabla|A|^2|^2}{4|A|^2}= \frac{1}{|A|^2}\sum_{k=1}^n\left( \sum_{i=1}^n h_{ii}h_{ii,k} \right)^2\nonumber \\
	={}&\frac{1}{|A|^2}\sum_{\beta=1}^{n-1}\left( \sum_{\alpha=1 }^{n-1}h_{\alpha\alpha}h_{\alpha\alpha,\beta} \right)^2+\sum_{\alpha =1}^{n-1}\frac{A_{\alpha\alpha}^2}{r^2}\label{eq:pfContorlGrad}\\
	\le{}&\sum_{\alpha,\beta=1 }^{n-1}A_{\alpha\alpha,\beta}^2+\sum_{\alpha =1}^{n-1}\frac{A_{\alpha\alpha}^2}{r^2}\nonumber .
\end{align}
Note that $A_{nn}\equiv 0$, and the minimality condition implies $\sum_{\alpha=1 }^{n-1}A_{\alpha\alpha,i}=0$ for any $1\le i\le n$.
A standard argument (cf. \cite[(2.20)-(2.22)]{Colding2011}) yields
\[
	\left( 1+\frac{2}{n-1} \right)\sum_{\alpha,\beta=1 }^{n-1}A_{\alpha\alpha,\beta}^2\le \sum_{\alpha,\beta,\gamma =1}^{n-1}A_{\alpha\beta,\gamma}^2,
\]
which is equivalent to
\[
	\left( 1+\frac{2}{n-1} \right)\left( |\nabla|A||^2- \frac{|A|^2}{r^2} \right)+\frac{3|A|^2}{r^2}\le \sum_{i,j,k=1 }^{n}A_{ij,k}^2
\]
by $|A|^2=r^2\sum_{\alpha=1 }^{n-1}A_{\alpha\alpha}^2$ and \eqref{eq:ExpandIICone}.

Note that by \eqref{eq:pfContorlGrad}, we have
\[
	|\nabla|A||^2\ge \frac{|A|^2}{r^2}.
\]

Therefore, for any $p \leq 1 + \frac{2}{n-1}$ (potentially negative), we have
\[
	(3-p) \frac{|A|^2}{r^2}+p|\nabla|A||^2\le \sum_{i,j,k =1}^{n}A_{ij,k}^2.
\]
Combining this with Simons' identity \eqref{eq:SimonsIden}, we obtain
\begin{equation}
	|A|\Delta |A|+|A|^4\ge (p-1)|\nabla|A||^2+(3-p)\frac{|A|^2}{r^2}.
	\label{eq:SimonsIneq}
\end{equation}

Now, we are ready to establish the sharp Bernstein theorem for $\delta$-stable minimal cone (Proposition \ref{thm:deltaCone}. The sharpness on $\delta$ due to the Simons-type cone which has been discussed in the introduction.

\begin{proof}
[Proof of Proposition \ref{thm:deltaCone}]
	Note that $n=2$ is trivial.
	So we only focus on the case $n\ge 3$.
        Let $\varepsilon>0$ be any positive real number.
	From \eqref{eq:SimonsIneq}, we have
	\begin{align}
		\frac{1}{2}\Delta(|A|^2+\varepsilon)^{\delta}\ge{}& 
		\delta (|A|^2+\varepsilon)^{\delta-2}\left((2\delta-2+p)|A|^2+p\varepsilon\right)|\nabla|A||^2\nonumber \\
		{}& +\delta(|A|^2+\varepsilon)^{\delta-1}(3-p)\frac{|A|^2}{r^2}-\delta (|A|^2+\varepsilon)^{\delta-1}|A|^4.
		\label{eq:pfSimons}
	\end{align}
	
	We then replace $\varphi$ by $(|A|^2+\varepsilon)^{\frac{\delta}{2}}\varphi$ in $\delta$-stability inequality to get
\begin{align*}
	&\delta\int_{ M} |A|^2(|A|^2+\varepsilon)^\delta \varphi^2\\
	\le{} & \int_{ M} \delta^2\varphi^2(|A|^2+\varepsilon)^{\delta-2}|A|^2|\nabla|A||^2+(|A|^2+\varepsilon)^\delta|\nabla \varphi|^2+\frac{1}{2}\left< \nabla\varphi^2,\nabla(|A|^2+\varepsilon)^\delta \right>\\
	={} & \int_{ M} \delta^2\varphi^2(|A|^2+\varepsilon)^{\delta-2}|A|^2|\nabla|A||^2+(|A|^2+\varepsilon)^\delta|\nabla \varphi|^2-\frac{1}{2}\varphi^2\Delta(|A|^2+\varepsilon)^\delta\\
	\le 
	{}& \int_{ M} (|A|^2+\varepsilon)^\delta|\nabla \varphi|^2+\delta(|A|^2+\varepsilon)^{\delta-2}((2-p-\delta)|A|^2-p\varepsilon)|\nabla|A||^2\varphi^2\\
	 {}& +\int_{ M} -\delta (3-p)(|A|^2+\varepsilon)^{\delta-1}\frac{|A|^2}{r^2}\varphi^2+\delta (|A|^2+\varepsilon)^{\delta-1}|A|^4\varphi^2.
\end{align*}
Here, we have used inequality \eqref{eq:pfSimons} for the second inequality.
To eliminate the term $(|A|^2+\varepsilon)^{\delta-2}|A|^2|\nabla|A||^2$, we set $p=2-\delta$.
It is straightforward to verify that $p < 1 + \frac{2}{n-1}$ provided $\delta > \frac{(n-2)^2}{4(n-1)}$ for any $n \ge 3$. This allows us to use \eqref{eq:SimonsIneq} and \eqref{eq:pfSimons}. Thus, we continue to estimate
\begin{align*}
	&\int_{ M} \delta \varepsilon|A|^2(|A|^2+\varepsilon)^{\delta-1}\varphi^2+p\delta \varepsilon (|A|^2+\varepsilon)^{\delta-2}|\nabla|A||^2\varphi^2\\
	&+\int_{ M} \delta (1+\delta)(|A|^2+\varepsilon)^{\delta-1}\frac{|A|^2}{r^2}\varphi^2\le \int_{ M} (|A|^2+\varepsilon)^\delta|\nabla \varphi|^2.
\end{align*}
Since the right-hand side $(|A|^2+\varepsilon)^\delta|\nabla\varphi|^2$ is uniformly bounded and the left-hand side is non-negative, we can let $\varepsilon \rightarrow 0$ to derive
\begin{equation}
	\delta(1+\delta)\int_{ M} r^{-2}|A|^{2\delta}\varphi^2\le \int_{ M} |A|^{2\delta}|\nabla\varphi|^2.
	\label{eq:pfDeltaStableCone}
\end{equation}

For some positive $\varepsilon$ to be chosen later, we set
\[
	\varphi=r^{\delta+\varepsilon}\max \{ 1,r \}^{1-\frac{n}{2}-2\varepsilon}.
\]
It is easy to verify that $\int_{M} r^{-2} |A|^{2\delta} \varphi^2 < \infty$, allowing us to plug it into the inequality \eqref{eq:pfDeltaStableCone} via an approximation argument (cf. \cite{Schoen1975curvature}). Consequently, we obtain,
\begin{align*}
	&\delta(1+\delta)\left[ \int_{ \left\{ r>1 \right\}} r^{2\delta-n-2\varepsilon}|A|^{2\delta}+\int_{ \left\{ r<1 \right\}} r^{2\delta-2+2\varepsilon}|A|^{2\delta} \right] \\
\le{}&
	 \left( 1+\delta-\frac{n}{2}-\varepsilon \right)^2 
	\int_{ \left\{ r>1 \right\}} r^{2\delta-n-2\varepsilon}|A|^{2\delta}+
	 \left(\delta+\varepsilon\right)^2\int_{ \left\{ r<1 \right\}} r^{2\delta-2+2\varepsilon}|A|^{2\delta}.
\end{align*}
Note that $\delta(1+\delta)>\delta^2$ is trivial, and we can check $\delta(1+\delta)>(1+\delta-\frac{n}{2})^2$ is equivalent to $\delta>\frac{(n-2)^2}{4(n-1)}$.
Hence, we can always choose $\varepsilon$ small enough such that the above inequality is impossible unless $|A|\equiv 0$.
\end{proof}

\subsection{Estimate of $L^{p}$ norm of second fundamental form}%
\label{sub:estimate_of_l_2alpha_norm_of_second_fundamental_form}

In this part, we demonstrate that if a $\delta$-stable minimal hypersurface is sufficiently close to a hyperplane or a flat cone, then the $L^{2\beta}$ norm of the second fundamental form is small for some $\beta \in (\frac{n-2}{n-1},1)$.

\begin{proposition}
	Let $n\ge 3$ and $\frac{n-2}{n}<\delta<1 $.
	Suppose $M$ is a immersed, two-sided, $\delta$-stable minimal hypersurface in $B^{n+1}_3(0)$ of $\mathbb{R}^{n+1}$ with $\mathcal{H}^{n-2}(\mathrm{sing}M\cap B^{n+1}_3(0))=0$.
	We also assume $\mathcal{H}^n(M\cap B^{n+1}_3(0))\le \Lambda$ for a positive constant $\Lambda.$
	Then there exists $\beta=\beta(n,\delta) \in (\frac{n-2}{n-1},1)$ such that
	\[
		\int_{ B^{n+1}_1(0)}|A|^{2\beta} \le C\left( \int_{ B^{n+1}_3(0)}|x_{n+1}|^2 \right)^\beta,
	\]
	holds for some $C=C(n,\delta,\Lambda)$.
	\label{prop:Alpha}
\end{proposition}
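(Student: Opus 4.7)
The plan is to combine two Caccioppoli-type inequalities — one for the curvature $|A|$ produced by Simons' identity and the $\delta$-stability inequality, and one for the coordinate function $x_{n+1}$ produced by its harmonicity $\Delta_M x_{n+1}=0$ on the minimal hypersurface — and then close the estimate by Moser-iterating via Michael-Simon's Sobolev inequality.

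First, I would run the computation of Lemma~\ref{lem:weakPoincare} with $k=0$ and $\alpha=\beta$: substituting $\varphi=|A|^\beta\phi$ into the $\delta$-stability inequality and using Simons' inequality $|A|\Delta|A|\ge \tfrac{2}{n}|\nabla|A||^2-|A|^4$ yields
\[
    c_1\int_M |A|^{2\beta+2}\phi^2 + c_2\int_M|\nabla|A|^\beta|^2\phi^2 \le C\int_M |A|^{2\beta}|\nabla\phi|^2
\]
with positive constants $c_1, c_2, C$ depending on $n,\delta,\beta$, provided $\tfrac{n-2}{n}<\beta<\min\{\delta,1\}$. The approximation across $\mathrm{sing}M$ is delicate because $\mathcal{H}^{n-2}(\mathrm{sing}M)=0$ is insufficient for the cover argument of Lemma~\ref{lem:weakPoincare} when $\beta$ is close to $1$; I would instead truncate $|A|^\beta$ at height $K$ (setting $\tilde u=\min\{|A|^\beta,K\}$), verify the Caccioppoli for the bounded function $\tilde u$ using only the $L^2$ bound on $|A|$ of Lemma~\ref{lem_boundedLpNorm}, and let $K\to\infty$.

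Second, since $\Delta_M x_{n+1}=0$, integration by parts against $x_{n+1}\eta^2$ immediately gives the height Caccioppoli
\[
    \int_M |\nabla_M x_{n+1}|^2\eta^2 \le 4\int_M x_{n+1}^2|\nabla\eta|^2,
\]
which controls $(1-\nu_{n+1}^2)$ by the $L^2$ norm of $x_{n+1}$. I would then choose in the curvature Caccioppoli the cutoff $\phi=\eta\,(\varepsilon+x_{n+1}^2)^{1/2}$ and let $\varepsilon\to 0^+$; since $|\nabla\phi|^2\le|\nabla_M x_{n+1}|^2\eta^2+x_{n+1}^2|\nabla\eta|^2$, after applying the height Caccioppoli the right-hand side collapses into an expression controlled by $\int_{M\cap B^{n+1}_3(0)}x_{n+1}^2$ times a factor involving $|A|^{2\beta}$, which is absorbable by Hölder using the $L^2$ bound on $|A|$. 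A Moser iteration on a dyadic sequence of balls $B_{r_k}$, $r_k\searrow 1$, using Michael-Simon's Sobolev inequality at each step, then converts the resulting weighted bound into the desired
\[
    \int_{M\cap B^{n+1}_1(0)}|A|^{2\beta} \le C\Bigl(\int_{M\cap B^{n+1}_3(0)}x_{n+1}^2\Bigr)^\beta
\]
with $C=C(n,\delta,\Lambda)$.

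The main obstacle is matching three competing constraints on $\beta$: $\beta>\tfrac{n-2}{n}$ for positivity of the Simons--$\delta$-stability coefficient, $\beta>\tfrac{n-2}{n-1}$ for the Michael-Simon Sobolev gain of $\tfrac{n}{n-2}$ to outrun the Hölder loss during the iteration, and $\beta<\min\{\delta,1\}$ so that Hölder at the end extracts $(\int x_{n+1}^2)^\beta$ cleanly from $\int x_{n+1}^{2\beta}$. Superimposed on this is the technical issue of approximating across $\mathrm{sing}M$ under only $\mathcal{H}^{n-2}(\mathrm{sing}M)=0$: the direct cover argument of Lemma~\ref{lem:weakPoincare} would require the stronger condition $\mathcal{H}^{n-2-2\beta}(\mathrm{sing}M)=0$, so the truncation-plus-limit workaround outlined above is essential.
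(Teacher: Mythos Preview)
Your approach has a genuine gap: the Caccioppoli inequality you obtain from Simons' identity and $\delta$-stability, after substituting $\phi=\eta\,(\varepsilon+x_{n+1}^2)^{1/2}$, controls only the \emph{weighted} quantity $\int |A|^{2\beta+2}x_{n+1}^2\eta^2$ (and the corresponding gradient term), not the unweighted $\int|A|^{2\beta}$. Moser iteration via Michael--Simon improves integrability but does not strip off the weight $x_{n+1}^2$, so it cannot convert this into the desired estimate. A quick scaling check exposes the problem: for a graph $x_{n+1}=\varepsilon f(x')$ one has $|A|\sim\varepsilon$, so your left-hand side is $O(\varepsilon^{2\beta+4})$ while the target $\int|A|^{2\beta}$ is $O(\varepsilon^{2\beta})$; two orders of $\varepsilon$ are unaccounted for. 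Moreover, your claim that the right-hand side ``collapses into an expression controlled by $\int x_{n+1}^2$'' is off: the dominant term $\int|A|^{2\beta}|\nabla_M x_{n+1}|^2\eta^2$, after H\"older against the $L^2$ bound on $|A|$, yields at best $C\bigl(\int g^2\bigr)^{1-\beta}\sim C\bigl(\int x_{n+1}^2\bigr)^{1-\beta}$, with the wrong exponent.

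The missing idea is to test $\delta$-stability with a power of the \emph{angle function} $g=\sqrt{1-(\nu\cdot e_{n+1})^2}$ rather than of $|A|$. The paper inserts $\varphi=(g^\beta-k)^+\phi$ and uses the differential inequality $g\Delta g\ge|A|^2\bigl(\tfrac{1}{n}-g^2\bigr)$, together with $|\nabla g|^2\le\tfrac{n-1}{n}|A|^2(1-g^2)$, to produce
\[
\int_{\{g>0\}} g^{2\beta-2}|A|^2\phi^2\le C\int g^{2\beta}|\nabla\phi|^2,
\]
valid for a suitable $\beta\in(\tfrac{n-2}{n-1},1)$ with $\beta-\tfrac{(n-1)\beta(1-\beta)}{n}<\delta$. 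The crucial point is the \emph{negative} power $g^{2\beta-2}$ on the left: writing $|A|^{2\beta}=(|A|^2g^{2\beta-2})^\beta(g^{2\beta})^{1-\beta}$ and applying H\"older once gives $\int_{\{g>0\}}|A|^{2\beta}\le C\int g^{2\beta}\le C\bigl(\int g^2\bigr)^\beta\Lambda^{1-\beta}$, with no iteration needed. The set $\{g=0\}$ is handled by a second substitution $\varphi=g\phi$, and finally $\int g^2=\int|e_{n+1}^\top|^2\le C\int x_{n+1}^2$ via the first variation formula. Your height Caccioppoli is essentially this last step, but it cannot by itself compensate for the absence of the $g^{2\beta-2}$-weighted curvature estimate.
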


\begin{proof}

We define $g=\sqrt{1-(\nu\cdot e_{n+1})^2}$, where $\nu$ is the unit normal vector of $M$.
Then standard calculations show that (cf. \cite{SchoenSimon1981Regularity})
\[
	|\nabla g|^2 
	\le\frac{n-1}{n}|A|^2(1-g^2),
 \]
 and  
 \[g \Delta g =-\frac{|\nabla g|^2}{1-g^2}+|A|^2(1-g^2)\ge |A|^2\left( \frac{1}{n}-g^2 \right).
\]
We shall choose the test function $(g^\beta-k)^+\varphi$ in the $\delta$-stability inequality, where $\varphi$ is a locally Lipschitz function with compact support in $B^{n+1}_3(0)$ and vanishes in a neighborhood of $\mathrm{sing}M$.
Note that we can check $((g^\beta-k)^+)^2 \in C^1(M)\cap W^{2,\infty}_{\mathrm{loc}}(M)$. We denote $R_k=\{x\in M: g^\beta(x)> k\}.$
Then, direct computation yields
\begin{align*}
\int_M &|\nabla ((g^\beta-k)^+\varphi)|^2\\
	={} & \int_{R_k } (g^\beta-k)^2|\nabla \varphi|^2+\beta^2g^{2\beta-2}|\nabla g|^2\varphi^2-\frac{1}{2}\int_{R_k} \varphi^2\Delta (g^\beta-k)^2\\
	= 
	{}& \int_{R_k} (g^\beta-k)^2|\nabla \varphi|^2+\beta^2g^{2\beta-2}|\nabla g|^2\varphi^2\\
	  &-\int_{R_k} \varphi^2g^{2\beta-2}\left[ \left( \beta(2\beta-1)-\beta(\beta-1) \frac{k}{g^\beta} \right)|\nabla g|^2+\beta\left( 1-\frac{k}{g^\beta} \right) g \Delta g \right] \\
	= 
	{}& \int_{R_k} (g^\beta-k)^2|\nabla \varphi|^2+g^{2\beta-2}\varphi^2\left( 1-\frac{k}{g^\beta} \right)\left[ \beta(1-\beta)|\nabla g|^2-\beta g \Delta g \right] \\
	\le 
	{}& \int_{R_k} (g^\beta-k)^2|\nabla \varphi|^2\\
 +&\int_{R_k}g^{2\beta-2}\varphi^2\left( 1-\frac{k}{g^\beta} \right)|A|^2\left[ \frac{(n-1)\beta(1-\beta)}{n}(1-g^2)-\beta\left( \frac{1}{n}-g^2 \right) \right].
\end{align*}
For the left-hand side, we have
\begin{align*}
	{} & \int_{ R_k} \delta|A|^2(g^\beta-k)^2\varphi^2=\int_{ R_k} \delta g^{2\beta-2}|A|^2\varphi^2\left( 1-\frac{k}{g^\beta} \right)(g^2-kg^{2-\beta}).
\end{align*}
Thus, we obtain the following inequality
\begin{align*}
	&\int_{ R_k}g^{2\beta-2}|A|^2\varphi^2( 1-\frac{k}{g^\beta} )
	\left[ c_1g^2+c_2-\delta k g^{2-\beta} \right] 
	\le
	\int_{ R_k} (g^\beta-k)^2|\nabla \varphi|^2.
\end{align*}
where 
\[
	c_1=\delta-\beta + \frac{(n-1)\beta(1-\beta)}{n},\quad c_2=\frac{\beta}{n}\left( 1-(n-1)(1-\beta) \right).
\]

Since $\delta \in (\frac{n-2}{n},1)$, we can select $\beta \in (\frac{n-2}{n-1},1)$ such that
\[
	\beta - \frac{(n-1)\beta(1-\beta)}{n}<\delta,
\]
thus
\[c_1>0,\ \ \ \ \ \ \ c_2>0.\]
Since $g\leq 1$, we can find $k_0>0$ small enough such that for every $k \in (0,k_0)$, we have
\[
	\delta k g^{2-\beta}\le c_1 g^2+\frac{c_2}{2}.
\]
Therefore, we obtain
\[
	\frac{c_2}{2}\int_{ R_k} g^{2\beta-2}|A|^2\varphi^2(1-\frac{k}{g^\beta})\le \int_{ R_k} (g^\beta-k)^2|\nabla \varphi|^2.
\]
for any $k \in (0,k_0)$ and positive $c_2=c_2(\delta)$.
Now, we can take $k\rightarrow 0^+$ to obtain
\begin{equation}
	\int_{ M\cap \left\{ g>0 \right\}} g^{2\beta-2}|A|^2\varphi^2\le C_2\int_{M} g^{2\beta}|\nabla \varphi|^2,
	\label{eq:pfL2WithG}
\end{equation}
for some $C_2=C_2(\delta)$.
Now, by the approximate argument (cf. Lemma \ref{lem_boundedLpNorm}), together with the fact that $\mathcal{H}^{n-2}(\mathrm{sing}M\cap B^{n+1}_3(0))=0$, we know \eqref{eq:pfL2WithG} holds for any $\varphi$ which is locally Lipschitz and supported in $B^{n+1}_3(0)$.
Now, we use H\"older's inequality and \eqref{eq:pfL2WithG} to get
\begin{align}
	&\int_{ M\cap \left\{ g>0 \right\}\cap B^{n+1}_1(0)} |A|^{2\beta}\nonumber\\={}&
	\int_{ M\cap \left\{ g>0 \right\}\cap B^{n+1}_1(0)} |A|^{2\beta}g^{-2\beta(1-\beta)} g^{2\beta(1-\beta)}\nonumber \\
	\le 
	{}& \left( \int_{ M\cap \left\{ g>0 \right\}\cap B^{n+1}_1(0)}|A|^2g^{-2+2\beta} \right) ^{\beta}\left( \int_{ M\cap \left\{ g>0 \right\}\cap B^{n+1}_1(0)} g^{2\beta}  \right)^{1-\beta}\nonumber \\
	\le 
	{}& C \int_{M\cap B^{n+1}_2(0)} g^{2\beta}\le C \left( \int_{ B^{n+1}_2(0)} g^2 \right)^\beta (\mathcal{H}^n(B^{n+1}_2(0)\cap M))^{1-\beta}.
	\label{eq:pf2AlphaPositive}
\end{align}
We also need to estimate $\int_{ M\cap\left\{ g=0 \right\}\cap B^{n+1}_1(0)} |A|^{2\beta}$.
By replacing $\varphi$ by $g \varphi$ in the $\delta$-stability inequality \eqref{eq:deltaStability} and after a standard calculation involving integration by parts similar to the above calculations, we obtain
\[
	\int_{M} |A|^2\varphi^2\left( \frac{1}{n}-(1-\delta)g^2 \right)\le \int_{M} g^2|\nabla \varphi|^2.
\]
Note that $\varphi$ can be any locally Lipschitz function with compact support in $B^{n+1}_3(0)$ by the same approximate argument as above.
Using $\delta>\frac{n-2}{n}$, we derive
\begin{align}
	\frac{1}{2n}\int_{ M\cap\{ g<\frac{1}{2} \} } |A|^2\varphi^2\le{}& \int_{M} g^2|\nabla \varphi|^2+\int_{ M\cap\left\{ (1-\delta)g^2>\frac{1}{n} \right\}} |A|^2\left( (1-\delta)g^2-\frac{1}{n} \right)\varphi^2\nonumber\\
	 \le{}& \int_{M} g^2|\nabla \varphi|^2 + \int_{M\cap \left\{ g>\frac{1}{2} \right\}} |A|^2\varphi^2\nonumber\\ \le {}& C\int_{M} g^2|\nabla \varphi|^2 .\label{eq:pfg=0}
\end{align}
Here, we have used \eqref{eq:pfL2WithG} to estimate the term $\int_{ M\cap\{ g>\frac{1}{2} \}} |A|^2\varphi^2$.
Now, it follows from H\"older's inequality and \eqref{eq:pfg=0} that
\begin{equation}
	\int_{ M\cap\left\{ g=0 \right\}\cap B^{n+1}_1(0)} |A|^{2\beta}\le C \left( \int_{ M\cap B^{n+1}_2(0)} g^2 \right)^\beta (\mathcal{H}^n(B^{n+1}_2(0)\cap M))^{1-\beta},
	\label{eq:pf2Alpha0}
\end{equation}
for some $C=C(n,\delta)$.
Note that by minimality, we can obtain
\[
	\int_{ B^{n+1}_2(0) \cap M} g^2=\int_{ B^{n+1}_2(0) \cap M} |e_{n+1}^\top |^2\le C \int_{ B^{n+1}_3(0) \cap M}|x_{n+1}|^2
\]
for some $C=C(n)$, where $e_{n+1}^\top $ means the projection of $e_{n+1}$ to the tangent space of $M$.
To see this, we insert $\varphi^2x_{n+1}e_{n+1}$ in to the first variation formula to get
\begin{align*}
	\int_{ M} \varphi^2|e_{n+1}^\top |^2={}& -2\int_{ M} \varphi \left< \nabla\varphi,  e_{n+1}^\top\right> x_{n+1}\\
	\le{}& \frac{1}{2}\int_{ M} \varphi^2|e_{n+1}^\top|^2+2\int_{ M} |\nabla \varphi|^2x_{n+1}^2,
\end{align*}
by the Cauchy-Schwarz inequality.

Together with \eqref{eq:pf2AlphaPositive} and \eqref{eq:pf2Alpha0}, we can obtain the desired estimate.
\end{proof}

With the help of $\varepsilon$-regularity theorem, we can prove the following results.
\begin{proposition}\label{prop:closePlane}
	
	Let $n\ge 3$, $\delta>\frac{n-2}{n}$ and $\bar{n}<n-4+\frac{4}{n}$.
	Suppose $M_j$ is a sequence of immersed, two-sided, $\delta$-stable minimal hypersurfaces in $B^{n+1}_4(0)$ with $\mathrm{dim}(\mathrm{sing}(M_j)\cap B^{n+1}_4(0))\le \bar{n}$, such that $M_j$ converge (as varifolds) to $q|P\cap B^{n+1}_4(0)|$ as $j\rightarrow \infty$ where $P$ is a hyperplane and $q$ is a positive integer.
	Then,
	\[
		\lim_{j\rightarrow \infty} \sup_{B^{n+1}_{\frac{1}{2}}(0)\cap M_j}|A_{M_j}|=0.
	\]
	Moreover, $\mathrm{sing} M_j\cap B^{n+1}_{\frac{1}{4}}(0)=\emptyset$ and $M_j\cap B^{n+1}_{\frac{1}{4}}(0)$ has exact $q$ connected components for $j$ large enough, and each component of $M_j\cap B^{n+1}_{\frac{1}{4}}(0)$ converge smoothly to $P$ in $B^{n+1}_{\frac{1}{4}}(0)$ as smooth immersions.
\end{proposition}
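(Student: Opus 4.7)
The plan is to combine the $L^{2\beta}$ tilt estimate of Proposition \ref{prop:Alpha} with the $\varepsilon$-regularity Theorem \ref{thm:epsilonRegularity}. After a rotation we may assume $P=\{x_{n+1}=0\}$; since $\delta$-stability is preserved under decreasing $\delta$, we may also suppose $\delta\in(\tfrac{n-2}{n},1)$ so that Proposition \ref{prop:Alpha} directly applies. The hypothesis $\dim(\mathrm{sing}\, M_j)\le\bar n<n-4+\tfrac{4}{n}<n-2$ guarantees $\mathcal{H}^{n-2}(\mathrm{sing}\, M_j)=0$, which is the remaining requirement of Proposition \ref{prop:Alpha}.

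The first step is to observe that varifold convergence gives, for any $\eta\in C^0_c(B^{n+1}_4(0))$,
\[
\int \eta\, x_{n+1}^2\, d\|M_j\| \longrightarrow q\int_{P\cap B^{n+1}_4(0)} \eta\, x_{n+1}^2 \,d\mathcal{H}^n = 0,
\]
so in particular $\int_{B^{n+1}_2(y)\cap M_j} x_{n+1}^2\to 0$ for every $y\in\overline{B^{n+1}_2(0)}$. For such $y$ we have $B^{n+1}_2(y)\subset B^{n+1}_4(0)$, so after translating $y$ to the origin and dilating by $3/2$, Proposition \ref{prop:Alpha} yields
\[
\int_{B^{n+1}_{2/3}(y)\cap M_j}|A|^{2\beta} \le C\Big(\int_{B^{n+1}_2(y)\cap M_j} x_{n+1}^2\Big)^{\beta}\longrightarrow 0,
\]
for some $\beta=\beta(n,\delta)\in(\tfrac{n-2}{n-1},1)$ and $C=C(n,\delta,\Lambda)$. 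Covering $\overline{B^{n+1}_2(0)}$ by finitely many balls $B^{n+1}_{2/3}(y_i)$ and summing gives $\int_{B^{n+1}_2(0)\cap M_j}|A|^{2\beta}\to 0$.

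Next, I would invoke the $\varepsilon$-regularity theorem. The assumption $\bar n<n-4+\tfrac{4}{n}$ is equivalent to $\tfrac{n-\bar n-2}{2}>\tfrac{n-2}{n}$, so combined with $\delta,\beta>\tfrac{n-2}{n}$ one can fix an exponent
\[
\alpha\in\Big(\tfrac{n-2}{n},\ \min\{\delta,\tfrac{n-\bar n-2}{2},1,\beta\}\Big),
\]
which is a nonempty interval. By Hölder's inequality and the uniform area bound $\mathcal{H}^n(M_j\cap B^{n+1}_4(0))\le \Lambda$, the convergence $\int|A|^{2\beta}\to 0$ implies $\int_{B^{n+1}_2(0)\cap M_j}|A|^{2\alpha}\to 0$. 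In particular it falls below the threshold $\varepsilon$ of Theorem \ref{thm:epsilonRegularity} for $j$ large, yielding
\[
\sup_{B^{n+1}_{1/2}(0)\cap M_j}|A|^{2\alpha}\le C\int_{B^{n+1}_2(0)\cap M_j}|A|^{2\alpha}\longrightarrow 0,
\]
which is the first assertion.

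For the final claim, once $\sup_{M_j\cap B^{n+1}_{1/2}(0)}|A_{M_j}|$ is arbitrarily small, each regular point of $M_j\cap B^{n+1}_{1/4}(0)$ has a graphical neighborhood of uniform size. The varifold convergence forces every tangent plane of $M_j$ in $B^{n+1}_{1/4}(0)$ to be close to $P$, so each sheet is in fact a graph over $P$ with arbitrarily small $C^1$ norm; this excludes any singular point in $B^{n+1}_{1/4}(0)$ for $j$ large. Standard Schauder estimates for the minimal surface equation then upgrade the $C^1$-smallness to smooth convergence to $0$, while area counting against the multiplicity of the limit varifold fixes the number of graphs to be exactly $q$. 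The main technical obstacle is step two: ensuring that Proposition \ref{prop:Alpha} can indeed be applied uniformly at the scaled balls $B^{n+1}_2(y)$ (and the consequent gymnastics with exponents to land inside the admissible range for the $\varepsilon$-regularity theorem), and a secondary subtlety is checking that the graphical structure in step four really rules out the a priori permissible singular set.
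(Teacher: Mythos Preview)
Your approach is essentially identical to the paper's: apply Proposition \ref{prop:Alpha} to turn the height smallness into $L^{2\beta}$ smallness of $|A|$, use H\"older to pass to an admissible exponent $\alpha$, and then invoke Theorem \ref{thm:epsilonRegularity}. Your covering-by-balls step is a harmless cosmetic variation (the paper simply applies Proposition \ref{prop:Alpha} once and rescales), and your reduction to $\delta<1$ is a correct observation the paper leaves implicit.

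The one place where the paper is sharper is the final paragraph. You argue informally that uniform curvature smallness plus closeness of tangent planes forces a graphical structure and hence rules out singularities; you yourself flag this as a ``secondary subtlety''. The paper handles it cleanly as follows: let $S_j$ be the projection of $\mathrm{sing}\,M_j$ onto $P$. Since $|A_{M_j}|$ is small on $B^{n+1}_{1/2}(0)$, the orthogonal projection $M_j\cap\big((B^n_{1/4}(0)\setminus S_j)\times\mathbb{R}\big)\to (B^n_{1/4}(0)\setminus S_j)$ is a covering map of degree $q$ for $j$ large. Because $\dim S_j\le\bar n<n-2$, the base $B^n_{1/4}(0)\setminus S_j$ is simply connected, so the covering is trivial and $M_j$ decomposes into exactly $q$ graphs over $B^n_{1/4}(0)\setminus S_j$. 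Each graph then extends smoothly across $S_j$ by the removable singularity theorem for the minimal surface equation (De Giorgi--Stampacchia, Simon), which simultaneously shows $\mathrm{sing}\,M_j\cap B^{n+1}_{1/4}(0)=\emptyset$. This is the precise mechanism behind your heuristic, and it is worth making explicit.
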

\begin{proof}
	We suppose $P=\left\{ x_{n+1}=0 \right\}$.
	By the monotonicity formula and the lower semicontinuity of mass, we have
	\[
		\lim_{j\rightarrow \infty} \sup_{B^{n+1}_3(0)\cap M_j}|x_{n+1}|=0.
	\]
	Now, we can apply Proposition \ref{prop:Alpha}, we know
	\[
		\lim_{j\rightarrow \infty} \int_{ B^{n+1}_1(0)\cap M_j}|A_{M_j}|^{2\beta}=0
	\]
	for some $\beta \in (\frac{n-2}{n-1},1)$.

	Noting that $\mathcal{H}^n(M_j\cap B^{n+1}_3(0))\rightarrow 3^nq\omega_n$ as $j\rightarrow \infty$, using H\"older's inequality, we have
	\[
		\lim_{j\rightarrow \infty} \int_{ B^{n+1}_1(0)\cap M_j} |A_{M_j}|^{2\alpha}=0
	\]
	for any $\alpha \in (\frac{n-2}{n},\min \{ \delta,\bar{n},\frac{n-2}{n-1},1 \})$.
	Now, we can apply $\varepsilon$-regularity theorem (Theorem \ref{thm:epsilonRegularity}) to get
	\begin{equation}
		\lim_{j\rightarrow \infty} \sup_{B^{n+1}_{\frac{1}{2}}(0)\cap M_j}|A_{M_j}|=0.
		\label{eq:pfLimitAconverges}
	\end{equation}
	Next, let us denote $S_j=P(\mathrm{sing}(M_j))$ where $P$ is the projection to $\left\{ x_{n+1}=0 \right\}$.
	Then, we know the projection $P$ gives us a covering map from $M_j\cap (B_{\frac{1}{4}}^n(0)\setminus S_j) \times \mathbb{R}$ to $(B_{\frac{1}{4}}^n(0)\backslash S_j) \times \left\{ 0 \right\}$, and the covering degree is $q$ by \eqref{eq:pfLimitAconverges} for $j$ large enough.
	Since $B_{\frac{1}{4}}^n(0)\backslash S_j$ is simply connected, as $\mathrm{dim}(S_j)\le \bar{n}$, we know $M_j\cap (B_{\frac{1}{4}}^n(0)\setminus S_j) \times \mathbb{R}$ has exact $q$ connected components and each component of $M_j\cap (B_{\frac{1}{4}}^n(0)\setminus S_j) \times \mathbb{R}$ can be written as a graph over $B_{\frac{1}{4}}^n(0)\backslash S_j$ of a smooth function.
	By the removable singularity theorem (cf. \cite{DeGigoriSingolarita1965removableSing, Simon1977removableSing}), we know such function can be extended to a smooth function on $B_{\frac{1}{4}}^n$ which solves the minimal surface equation and hence, $\mathrm{sing}M_j \cap B_{\frac{1}{4}}^n(0)\times \mathbb{R}=\emptyset$ and it can be decomposed into $q$ connected components, each of which converges smoothly to $B_{\frac{1}{4}}^n(0)\times \left\{ 0 \right\}$ as smooth immersions by the standard PDE theory.
\end{proof}

We consider $\boldsymbol{C}$ to be the flat cone in $\mathbb{R}^{n+1}$ defined by the union of hyperplanes and half-hyperplanes.
Explicitly, we can represent $\boldsymbol{C}$ as
\[
	\boldsymbol{C}:=\sum_{i =1}^{N_1}p_i|P_i|+\sum_{i=1}^{N_2}q_i|H_i|,
\]
for $\{ p_i \}_{i=1}^{N_1}$, $\{ q_i \}_{i=1}^{N_2}\subset \mathbb{N}$.
Here $\{ P_i \}_{i=1}^{N_1}$ are distinct hyperplanes and $\{ H_i \}_{i=1}^{N_2}$ are distinct half-hyperplanes such that $0 \in P_i$ for each $1\le i\le N_1$, $0 \in \bar{H}_i$ for each $1\le i\le N_2$, and $H_j\nsubseteq P_i$ for each $1\le i\le N_1$ and $1\le j\le N_2$.

We denote the singular set of $\boldsymbol{C}$ in the embedded sense as $T(\boldsymbol{C})$, which is precisely defined as:
\[
	T(\boldsymbol{C}):=\left\{ x \in \mathrm{spt}\|\boldsymbol{C}\|: \mathrm{spt}\|\boldsymbol{C}\| \text{ is not a part of hyperplane near }x\right\}.
\]

We denote $T_\tau(\boldsymbol{C})$ as the $\tau$-neighborhood of $T(\boldsymbol{C})$ for $\tau>0$.

\begin{proposition}
	Let $n\ge 3$, $\delta>\frac{n-2}{n}$, and $\bar{n}<n-4+\frac{4}{n}$.
	Suppose $M_j$ is a sequence of smooth, immersed, two-sided $\delta$-stable minimal hypersurfaces in $B^{n+1}_4(0)$ with $\mathrm{dim}(\mathrm{sing}(M_j)\cap B^{n+1}_4(0))\le \bar{n}$, such that $M_j$ converge (as varifolds) to $\boldsymbol{C}\lfloor(B^{n+1}_4(0))$ as $j\rightarrow \infty$.
	Then,
	\[
		\lim_{j\rightarrow \infty} \sup_{B^{n+1}_{\frac{1}{2}}(0)\cap M_j}|A_{M_j}|=0.
	\]

	In particular, $\boldsymbol{C}$ is a sum of hyperplanes with multiplicity, $\mathrm{sing} M_j\cap B^{n+1}_{\frac{1}{4}}(0)=\emptyset$, and $M_j\cap B^{n+1}_{\frac{1}{4}}(0)$ converge smoothly to $\boldsymbol{C}$ in $B^{n+1}_{\frac{1}{4}}(0)$ as immersions with $q$ connected components for $q=\Theta(\|\boldsymbol{C}\|,0)$.
	\label{prop:closeClassicalCone}
\end{proposition}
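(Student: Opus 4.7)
The core claim to establish is the uniform curvature decay $\lim_{j\to\infty}\sup_{B^{n+1}_{1/2}(0)\cap M_j}|A_{M_j}|=0$; the ``in particular'' assertions follow from this combined with the removable-singularity theorem and the following topological observation. A smooth immersion of an open $n$-manifold into $B^{n+1}_{1/4}(0)$ has no interior boundary, whereas a half-hyperplane $H_i$ appearing in the flat cone $\boldsymbol{C}$ has $\partial H_i\cap B^{n+1}_{1/4}(0)\ne\emptyset$ (since $\partial H_i$ is an $(n-1)$-subspace through the origin). Hence no connected component of $M_j\cap B^{n+1}_{1/4}(0)$ can converge smoothly to a piece of $H_i$, so $\boldsymbol{C}$ has no half-hyperplane summands and $N_2=0$. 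The component count $q=\Theta(\|\boldsymbol{C}\|,0)=\sum p_i$ then matches by density considerations.

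For the regular part of $\boldsymbol{C}$, I would argue as follows. For each $x_0\in\mathrm{spt}\|\boldsymbol{C}\|\cap B^{n+1}_{1/2}(0)\setminus T(\boldsymbol{C})$, by the definition of $T(\boldsymbol{C})$ there is $\rho=\rho(x_0)>0$ with $\mathrm{spt}\|\boldsymbol{C}\|\cap B^{n+1}_\rho(x_0)\subset P$ for a single hyperplane $P$, and by the constancy theorem applied to the stationary varifold $\boldsymbol{C}$ we have $\boldsymbol{C}\lfloor B^{n+1}_\rho(x_0)=q_{x_0}\,|P\cap B^{n+1}_\rho(x_0)|$ for some positive integer $q_{x_0}$. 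Applying Proposition \ref{prop:closePlane} to the rescaled sequence $\eta_{x_0,\rho/4}(M_j)$ yields $\sup_{B^{n+1}_{\rho/16}(x_0)\cap M_j}|A_{M_j}|\to 0$ and a decomposition of $M_j\cap B^{n+1}_{\rho/16}(x_0)$ into $q_{x_0}$ smooth sheets each converging to $P$. Piecing these local decompositions together over a covering of $\mathrm{spt}\|\boldsymbol{C}\|\cap B^{n+1}_{1/2}(0)\setminus T(\boldsymbol{C})$ identifies each connected component of $M_j\cap B^{n+1}_{1/4}(0)$, for $j$ large, with a globally defined smooth sheet paired to a specific hyperplane $P_i$ of $\boldsymbol{C}$; curvature decay along each sheet then follows from Proposition \ref{prop:Alpha} with the reference plane $P_i$ combined with Theorem \ref{thm:epsilonRegularity}.

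The remaining task, and the principal obstacle, is to extend the control across the lower-dimensional classical set $T(\boldsymbol{C})\cap\overline{B^{n+1}_{1/2}(0)}$, i.e.\ the intersection locus of the various hyperplanes composing $\boldsymbol{C}$. I argue by contradiction: if $x_j\in M_j$ with $|A_{M_j}|(x_j)$ bounded below and $x_j\to x_0\in T(\boldsymbol{C})$, a standard point-picking procedure plus rescalings $r_j\to 0^+$ produce $\widetilde M_j=\eta_{x_0,r_j}(M_j)$ converging as varifolds to some tangent cone $\boldsymbol{C}^*\in\mathrm{VarTan}(\boldsymbol{C},x_0)$. Since $\boldsymbol{C}$ is a flat cone, so is $\boldsymbol{C}^*$, with $0\in T(\boldsymbol{C}^*)$. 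Reapplying the regular-part analysis to $\widetilde M_j$ on the regular part of $\boldsymbol{C}^*$, combined with Proposition \ref{prop:Alpha} on each hyperplane piece of $\boldsymbol{C}^*$ and the $\varepsilon$-regularity Theorem \ref{thm:epsilonRegularity}, yields via a further blow-up a non-flat $\delta$-stable minimal cone in $\mathbb{R}^{n+1}$. This contradicts the cone classification Proposition \ref{thm:deltaCone} directly when $n\le 5$, since then $\delta>\tfrac{n-2}{n}>\tfrac{(n-2)^2}{4(n-1)}$; when $n\ge 6$ and $\delta$ lies in $(\tfrac{n-2}{n},\tfrac{(n-2)^2}{4(n-1)}]$, one uses the product splitting $\boldsymbol{C}^*=\mathbb{R}\times\boldsymbol{C}'$ available at every non-vertex point $x_0\ne 0$ and iterates the tangent-cone reduction to lower the effective ambient dimension until Proposition \ref{thm:deltaCone} applies.

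The delicacy of this last argument lies in ensuring, through the blow-up, that both $\delta$-stability and the cone structure of the limit survive — precisely what the sharp bound $\delta>\tfrac{n-2}{n}$ (the catenoidal threshold) secures, by excluding catenoidal neck-formations that could otherwise produce non-trivial $\tfrac{n-2}{n}$-stable tangents. The dimensional hypothesis $\bar n<n-4+\tfrac{4}{n}$ is needed to keep the admissible exponent range $\alpha\in(\tfrac{n-2}{n},\min\{\delta,\tfrac{n-\bar n-2}{2},1\})$ in Theorem \ref{thm:epsilonRegularity} non-empty, and to preserve this constraint along the rescalings used in the iteration.
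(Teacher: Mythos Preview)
Your handling of the regular part of $\boldsymbol{C}$ is fine, but the argument across $T(\boldsymbol{C})$ has a genuine gap. The blow-up you describe does not close: rescaling $\widetilde M_j=\eta_{x_0,r_j}(M_j)$ so that the varifold limit is a tangent cone $\boldsymbol{C}^*$ of $\boldsymbol{C}$ at $x_0$ sends the curvature at the bad point to $r_j|A_{M_j}|(x_j)\to 0$ whenever $r_j\to 0$, so there is nothing to contradict; if instead you point-pick and normalize so that $|A_{\widetilde M_j}|=1$ at the bad point with $|A_{\widetilde M_j}|\le 2$ nearby, the smooth limit is a complete non-flat $\delta$-stable minimal immersion, not a cone, and ruling this out is precisely the $\delta$-stable Bernstein theorem that the paper only proves \emph{after} this proposition (and only for $n=3,4$). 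Your claimed ``further blow-up'' that produces a non-flat $\delta$-stable cone is not justified: tangent cones of the flat cone $\boldsymbol{C}^*$ are again flat, so Proposition~\ref{thm:deltaCone} gives no information, and the iteration you sketch for $n\ge 6$ simply returns to the same configuration.

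The paper avoids blow-up entirely and instead uses a direct integral argument. From Lemma~\ref{lem:weakPoincare} with $k=0$ together with the Michael--Simon inequality one obtains the uniform higher-integrability bound
\[
\sup_j\int_{M_j\cap B^{n+1}_1(0)}|A_{M_j}|^{\frac{2\alpha n}{n-2}}<\infty.
\]
Then H\"older's inequality gives
\[
\int_{M_j\cap T_\tau(\boldsymbol{C})\cap B^{n+1}_1(0)}|A_{M_j}|^{2\alpha}
\le C\,\mathcal{H}^n\bigl(M_j\cap T_\tau(\boldsymbol{C})\cap B^{n+1}_1(0)\bigr)^{2/n}\le C\tau^{2/n},
\]
the last step by a covering argument and the monotonicity formula. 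Combined with the regular-part estimate $\int_{M_j\cap B^{n+1}_1(0)\setminus T_\tau(\boldsymbol{C})}|A_{M_j}|^{2\alpha}\to 0$, this yields $\int_{M_j\cap B^{n+1}_1(0)}|A_{M_j}|^{2\alpha}\to 0$ after letting $\tau\to 0$, and Theorem~\ref{thm:epsilonRegularity} finishes. The cone classification Proposition~\ref{thm:deltaCone} plays no role in this proof.
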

\begin{proof}
	For each fixed $\tau>0$, we know $M_j\cap B^{n+1}_3(0)$ converge to $\boldsymbol{C}\lfloor (B^{n+1}_3(0)\backslash T_\tau(\boldsymbol{C}))$ smoothly as $j\rightarrow \infty$ by Proposition \ref{prop:closePlane}.
	For each $\tau>0$, by the proof of Proposition \ref{prop:closePlane}, we know
	\[
		\lim_{j\rightarrow \infty} \int_{ B^{n+1}_2(0)\cap M_j\backslash T_\tau(\boldsymbol{C})} |A_{M_j}|^{2\alpha}=0.
	\]
	Now, we take $k=0$, $\phi$ to be a non-negative cut-off function supported in $B^{n+1}_{\frac{3}{2}}(0)$, equal to $1$ in $B^{n+1}_1(0)$, and $|\nabla \phi|\le 4$ in Lemma \ref{lem:weakPoincare}, together with the Michael-Simon's inequality, we have
	\begin{align}
		&\int_{ M_j\cap B^{n+1}_1(0)}|A_{M_j}|^{\frac{2\alpha n}{n-2}}\le 
		C \left( \int_{ M_j\cap B^{n+1}_{\frac{3}{2}}(0)} |A_{M_j}|^{2\alpha} \right)^{\frac{n}{n-2}}\nonumber \\
		\le{}& C \left( \int_{ M_j\cap B^{n+1}_{\frac{3}{2}}(0)} |A_{M_j}|^{2} \right)^{\frac{n\alpha}{n-2}}(\mathcal{H}^n(M_j\cap B^{n+1}_{\frac{3}{2}}(0)))^{\frac{n(1-\alpha)}{n}},
	\label{eq:pfSecondUniBound}
	\end{align}
	for some $C=C(n)$.
	Note that the $\delta$-stability condition implies the right-hand side of \eqref{eq:pfSecondUniBound} is uniformly bounded by Lemma \ref{lem_boundedLpNorm}.
	Hence,
	\[
		\sup_{j>0}\int_{ M_j\cap B^{n+1}_1(0)} |A_{M_j}|^{\frac{2\alpha n}{n-2}}<\infty.
	\]
	
	By H\"older's inequality, we have
	\begin{align*}
		&\int_{ M_j\cap T_\tau(\boldsymbol{C}) \cap B^{n+1}_1(0)}|A_{M_j}|^{2\alpha}\\
		\le{} & \left( \int_{ M_j\cap T_\tau(\boldsymbol{C})\cap B^{n+1}_1(0)}|A_{M_j}|^{\frac{2\alpha n}{n-2}}  \right)^{\frac{n-2}{n}}\mathcal{H}^n(M_j\cap T_\tau(\boldsymbol{C})\cap B^{n+1}_1(0))^{\frac{2}{n}}.\\
	\end{align*}

	By a standard covering argument with monotonicity formula, we know that
	\[
		\mathcal{H}^n(M_j\cap T_\tau(\boldsymbol{C}) \cap B^{n+1}_2(0))\le C\tau,
	\]
	for some $C=C(n,\Theta(\|\boldsymbol{C}\|,0))$ for $j$ large enough.
	Hence, we have
	\begin{equation}
		\lim_{j\rightarrow \infty} \int_{ B^{n+1}_1(0)\cap M_j} |A_{M_j}|^{2\alpha}=\lim_{j\rightarrow \infty} \int_{ B^{n+1}_1(0)\cap M_j\cap T_\tau(\boldsymbol{C})} |A_{M_j}|^{2\alpha}\le C\tau^{\frac{2}{n}},
		\label{eq:pfAlphaTau}
	\end{equation}
	for some $C<\infty$ which is independent of $\tau$.
	Since the left-hand side of \eqref{eq:pfAlphaTau} is independent of $\tau$, by the arbitrariness of $\tau$, we obtain
	\[
		\lim_{j\rightarrow \infty} \int_{ B^{n+1}_1(0)\cap M_j} |A_{M_j}|^{2\alpha}=0.
	\]
	Thus, we apply $\varepsilon$-regularity theorem (Theorem \ref{thm:epsilonRegularity}) to conclude that $$\lim_{j\rightarrow \infty} \sup_{B^{n+1}_{\frac{1}{2}}(0)\cap M_j}|A_{M_j}|=0,$$
 which implies that each connected component of $M_j\cap B^{n+1}_{\frac{1}{2}}(0)$ converge to a hyperplane in the varifold sense. Furthermore, by Proposition \ref{prop:closePlane}, for $j$ large enough, $\mathrm{sing}M_j\cap B^{n+1}_{\frac{1}{4}}(0)$ is empty, and $M_j\cap B^{n+1}_{\frac{1}{4}}(0)$ has exact $q$ connected components, each of which converges smoothly to a hyperplane in $B^{n+1}_{\frac{1}{4}}(0)$.
\end{proof}

\subsection{Finish the proof of Theorem \ref{thm_regularity_and_compactness_theorem}}%
\label{sub:finish_the_proof_of_theorem_thm}

\begin{proof}
	[Proof of Theorem \ref{thm_regularity_and_compactness_theorem}]
	By Allard's compactness theorem (cf. \cite{Allard1972, simon1983lectures}), we obtain a stationary integral varifold $V$ of $B^{n+1}_4(0)$ such that up to a subsequence, $|M_j|$ converges to $V$ in the sense of varifolds.
	Let $S=\mathrm{sing}\|V\|$ be the singular point set of $V$.
	We need to analyze the tangent cone $\boldsymbol{C}$ of $V$ at $x_0\in S\cap B^{n+1}_{\frac{1}{2}}(0)$.
	Indeed, we have the following lemma.
	\begin{lemma}
		For any $\boldsymbol{C} \in \mathrm{VarTan}(V,x_0)$ for $x_0 \in S \cap B^{n+1}_{\frac{1}{2}}(0)$, we can write $\boldsymbol{C}=\boldsymbol{C}'\times \mathbb{R}^{n-p}$ for some $p\ge n_\delta$.
		\label{lem:coneDimReduction}
	\end{lemma}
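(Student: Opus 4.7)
The plan is a Federer-type dimension reduction driven by the cone classification Proposition \ref{thm:deltaCone} and the hyperplane-rigidity Proposition \ref{prop:closeClassicalCone}. To set up, realize $\boldsymbol{C}$ as a limit of $\delta$-stable pieces: by Allard's monotonicity together with a diagonal extraction, $\boldsymbol{C}$ arises as a varifold limit of rescalings of a subsequence of the original $M_j$'s, each a two-sided $\delta$-stable minimal immersion with singular set of dimension $\le \bar n$. Upper semicontinuity of the singular-set dimension under such convergence gives $\dim(\mathrm{sing}\|\boldsymbol{C}\|) \le \bar n$. The $\varepsilon$-regularity theorem (Theorem \ref{thm:epsilonRegularity}) together with the $L^{2\beta}$ control from Proposition \ref{prop:Alpha} then shows that the convergence is smooth on the regular part, so $\mathrm{spt}\|\boldsymbol{C}\|\setminus \mathrm{sing}\|\boldsymbol{C}\|$ is a smooth two-sided minimal immersion and the $\delta$-stability inequality \eqref{eq:deltaStability} passes to the limit for test functions supported off $\mathrm{sing}\|\boldsymbol{C}\|$.

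Next, perform the splitting. Let $L\subset \mathbb{R}^{n+1}$ be the maximal linear subspace along which $\boldsymbol{C}$ is translation-invariant, and set $p = n - \dim L$. After an orthogonal change of coordinates $\boldsymbol{C} = \boldsymbol{C}'\times L$ with $\boldsymbol{C}'$ a stationary integral cone in $L^\perp\cong \mathbb{R}^{p+1}$. A standard Fubini-type test-function argument---plug $\varphi(x,y) = \varphi'(x)\eta_R(y)$ into the stability inequality for $\boldsymbol{C}$ with $\eta_R$ a radial cut-off on $L$ of radius $R$ and send $R\to \infty$---transfers $\delta$-stability from $\boldsymbol{C}$ to $\boldsymbol{C}'$. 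The singular-set dimension bound becomes
\[
\dim\bigl(\mathrm{sing}\|\boldsymbol{C}'\|\bigr) \;\le\; \bar n - (n-p) \;<\; p - 2
\]
by our hypothesis $\bar n < n - 4 + 4/n$, which is exactly what the approximation argument of Lemma \ref{lem_boundedLpNorm} (and hence the test-function approximation in the proof of Proposition \ref{thm:deltaCone}) requires.

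Finally, suppose for contradiction that $p \le n_\delta - 1$. For $p\ge 2$ this means $\delta > \tfrac{(p-2)^2}{4(p-1)}$ by the definition of $n_\delta$, so Proposition \ref{thm:deltaCone} forces $\boldsymbol{C}'$ to be flat; the degenerate cases $p\in\{0,1\}$ render $\boldsymbol{C}'$ automatically a union of affine subspaces through $0$. In every case $\boldsymbol{C}$ is then a sum of hyperplanes through the origin, so Proposition \ref{prop:closeClassicalCone} applied to the approximating blow-up sequence yields smooth convergence of the rescaled $M_j$'s near $x_0$ with $x_0$ a regular point of the limit, contradicting $x_0 \in S = \mathrm{sing}\|V\|$. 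Hence $p \ge n_\delta$.

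\emph{Main obstacle.} The most delicate step is verifying that the $\delta$-stability together with smooth convergence on the regular part genuinely survive passing to the tangent cone; once those are in place, the Federer splitting and application of Proposition \ref{thm:deltaCone} are routine. This verification relies crucially on $\bar n < n - 4 + 4/n$, which is calibrated so that the singular-set approximation arguments of Section \ref{sec:curvature_estimate_with_area_conditions} (in particular Lemma \ref{lem_boundedLpNorm}) remain valid for $\boldsymbol{C}'$ in every dimension $p$ that can arise.
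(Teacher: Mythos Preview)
Your approach has a genuine gap at its core: the assertion ``upper semicontinuity of the singular-set dimension under such convergence gives $\dim(\mathrm{sing}\|\boldsymbol{C}\|)\le\bar n$'' is neither standard nor correct in general, and is essentially what the whole regularity theorem is trying to establish. Singular sets can grow under varifold limits---a singular point of $\boldsymbol{C}$ is typically a limit of \emph{regular} points of the rescaled $M_j$ at which curvature blows up, not a limit of points in $\mathrm{sing}M_j$. Without this bound, your estimate $\dim(\mathrm{sing}\|\boldsymbol{C}'\|)<p-2$ collapses, and you cannot feed $\boldsymbol{C}'$ into Proposition~\ref{thm:deltaCone}. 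There is a secondary gap as well: even granting the singular-set bound, Proposition~\ref{thm:deltaCone} as stated and proved applies only to cones that are smooth immersions away from the vertex (its proof uses Simons' identity pointwise and a global radial test function), so you would still need a nontrivial extension to cones with interior singularities that the paper does not supply.

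The paper bypasses both obstructions by an \emph{iterated tangent cone} argument rather than a single direct splitting. Starting from $\boldsymbol{C}_1=\boldsymbol{C}$, one repeatedly takes a tangent cone at a singular point off the spine; the spine dimension strictly increases at each step, so the process terminates at some $\boldsymbol{C}_N=\boldsymbol{C}'\times\mathbb{R}^{\dim\mathcal{S}(\boldsymbol{C}_N)}$ for which, by construction, $\boldsymbol{C}'\setminus\{0\}$ is a smooth immersion. A diagonal argument exhibits each $\boldsymbol{C}_j$ as a varifold limit of rescalings of the original $M_k$'s, so $\delta$-stability passes to $\boldsymbol{C}'$ and Proposition~\ref{thm:deltaCone} applies \emph{directly}, with the one-dimensional case ruled out separately via Proposition~\ref{prop:closeClassicalCone}. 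Since spine dimensions are nondecreasing along the iteration, $\dim\mathcal{S}(\boldsymbol{C})\le\dim\mathcal{S}(\boldsymbol{C}_N)\le n-n_\delta$. The iteration is not an optional refinement---it is precisely what reduces to a cross-section with no interior singularities, which your single-step splitting cannot guarantee.
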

	\begin{proof}
		[Proof of Lemma \ref{lem:coneDimReduction}]
		For any cone $\boldsymbol{C}$, we write $\mathcal{S}(\boldsymbol{C})$ (the spine of $\boldsymbol{C}$) to be the linear subspace containing all $x\in \mathbb{R}^{n+1}$ such that $\boldsymbol{C}$ is invariant under the translation along the line spanned by $x$.
		For any $x_0 \in S$, we introduce the notion of iterated tangents of $V$ at $x_0$ as follows.
		We say a collection of cones $\left\{ \boldsymbol{C}_1,\boldsymbol{C}_2,\cdots ,\boldsymbol{C}_N \right\}$ is iterated tangents of $V$ at $x_0$ if $\boldsymbol{C}_1$ is the tangent cone of $V$ at $x_0$, and $\boldsymbol{C}_{j+1}$ is the tangent cone of of $\boldsymbol{C}_j$ at $x_j \in \mathrm{sing}\|\boldsymbol{C}_j\|\backslash \mathcal{S}(\boldsymbol{C}_j)$ for $1\le j\le N-1$.
		Moreover, by the standard argument (cf. \cite{SchoenSimon1981Regularity,Wickramasekera2008regularityCompactMult2,bellettini}), we can make sure the iterated tangents satisfy the following properties:
		\begin{enumerate}
			\item Each $\boldsymbol{C}_j$ is not smoothly immersed (i.e., $\mathrm{sing}\|\boldsymbol{C}_j\|\neq \emptyset$).
			\item $\mathrm{dim}(\mathcal{S}(\boldsymbol{C}_{j+1}))>\mathrm{dim}(\mathcal{S}(\boldsymbol{C}_{j}))$ for each $j=1,2,\cdots ,N-1$.
			\item $\boldsymbol{C}_N=\boldsymbol{C}'\times \mathbb{R}^{\mathrm{dim}(\mathcal{S}(\boldsymbol{C}_N))}$ where $\boldsymbol{C}'\backslash \left\{ 0 \right\}$ is a smooth immersed cone after a suitable rotation in $\mathbb{R}^{n+1}$.
			\item For each $1\le j\le N$, we can find a sequence of points $\left\{ y_k \right\}$ with $y_k\rightarrow x_0$, a sequence of positive real numbers $\left\{ r_k \right\}$ with $r_k\rightarrow 0^+$ as $k\rightarrow \infty$, such that $\eta_{y_k,r_k}(M_k)$ converges to $\boldsymbol{C}_j$ in the sense of varifolds and the convergence is smooth away from the singular set of $\boldsymbol{C}_j$ by Proposition \ref{prop:closePlane} and Proposition \ref{prop:closeClassicalCone}.
		\end{enumerate}
		In particular, the fourth condition implies that the smooth immersed part of $\boldsymbol{C}_j$ is stable, and the second condition implies $N$ is a finite number.

		Now, let us determine the dimension of $\boldsymbol{C}'$.
		Note that $\boldsymbol{C}_N$ cannot be a hyperplane by the first condition.

		If the dimension of $\boldsymbol{C}'$ is one, then $\boldsymbol{C}_N$ is the sum of distinct half-hyperplanes with multiplicity.
		But by Proposition \ref{prop:closeClassicalCone}, we know $\boldsymbol{C}_N$ is a sum of hyperplanes with multiplicity, which contradicts the first condition.

		Therefore, we know $\boldsymbol{C}'$ has dimension at least two.
		But the fourth condition implies that $\boldsymbol{C}'$ is a smooth immersed $\delta$-stable cone away from $\left\{ 0 \right\}$, and hence, $\boldsymbol{C}'$ has dimension at least $n_\delta$ by Proposition \ref{thm:deltaCone}.

		Hence, by the second condition, we know $\mathrm{dim}(\boldsymbol{C})\ge n-n_\delta$ for any $\boldsymbol{C}\in \mathrm{VarTan}(V,x_0)$, and the lemma follows.
	\end{proof}
	Now, let us finish the proof of Theorem \ref{thm_regularity_and_compactness_theorem}.
	If $n\ge n_\delta$, then using Lemma \ref{lem:coneDimReduction} we can apply Federer's dimension reducing principle (see e.g. \cite[Appendix A]{simon1983lectures} to get $\mathrm{dim} (\mathrm{sing}(\|V\|)\cap B^{n+1}_{\frac{1}{2}}(0))\le n-n_\delta$, and when $n<n_\delta$, we can directly apply Lemma \ref{lem:coneDimReduction} to get $\mathrm{sing}(\|V\|)\cap B^{n+1}_{\frac{1}{2}}(0)=\emptyset$.
	At last, we need to show when $n=n_\delta$, $\mathrm{sing}(\|V\|)\cap B^{n+1}_{\frac{1}{2}}(0)$ is discrete.

	We argue by contradiction.
	Suppose we can find a sequence of points $\left\{ x_j \right\}\subset \mathrm{sing}(\|V\|)\cap B^{n+1}_{\frac{1}{2}}(0)$ such that $x_j\rightarrow x_0\in \mathrm{sing}(\|V\|)\cap B^{n+1}_{\frac{1}{2}}(0)$ for some $x_0 \in \mathrm{sing}(\|V\|)\cap B^{n+1}_{\frac{1}{2}}(0)$.
	Up to a subsequence, we can assume $(\eta_{x_0,\rho_i})_{\#}V$ converges to $\boldsymbol{C} \in \mathrm{VarTan}(V,x_0)$ where $\rho_i=|x_i-x_0|$, and assume $y=\lim_{i\rightarrow \infty} \frac{x_i-x_0}{\rho_i}\neq 0$.
	Note that since $y$ is a regular point of $\boldsymbol{C}$ by Lemma \ref{lem:coneDimReduction}, we know $(\eta_{x,\rho_i})_{\#}V$ is a smooth immersion in a neighborhood of $y$ for $i$ large enough by Proposition \ref{prop:closeClassicalCone}, which contradicts the fact that $x_i$ is a singular point of $V$.
	This finishes the proof of Theorem \ref{thm_regularity_and_compactness_theorem}.
\end{proof}
\begin{proof}
    [Proof of Corollary \ref{thm:mainAreaGrowth} and Corollary \ref{thm:mainCurvatureEst}]

    Suppose the Generalized Bernstein Theorem fails.
	Then, for $n\ge 3$, there exists complete minimal, two-sided, $\delta$-stable immersed hypersurface in $\mathbb{R}^{n+1}$ with $\delta>\max \left\{ \frac{n-2}{n},\frac{(n-2)^2}{4(n-1)} \right\}$ with the volume growth estimate
	\[
		\mathcal{H}^{n}(M\cap B^{n+1}_R(0))\le \Lambda R^n,
	\]
	for some $\Lambda \in (0,+\infty)$. 
	Besides, there exists a point, says $0$, such that $|A|(0)>0$.

	Consider a scaling sequence $M_j:=\frac{1}{j}M$ for $j>0$ and let $A_j$ denote the second fundamental form of $M_j$.
	By taking a subsequence (and an abuse of notation), we assume that $M_j$ converges to an integral stationary $n$-varifold $V$ in the sense of varifolds with
	\begin{equation}
		\lim_{j\rightarrow \infty} |A_j|(0)=+\infty.
		\label{eq:pfUnboundedII}
	\end{equation}
	Moreover, since $V$ is a blow-down limit of $M_j$, we know $V$ is a cone with the vertex at $0$ by the monotonicity formula and a standard argument in geometric measure theory.

	By Theorem \ref{thm_regularity_and_compactness_theorem}, $V$ is a sum of hyperplanes with multiplicity since $V$ is a cone.
	By applying Proposition \ref{prop:closeClassicalCone}, we obtain
	\begin{align*}
		\lim_{j\rightarrow \infty} \sup_{B^{n+1}_{\frac{1}{4}}(0)\cap M_j}|A_j|={} & 0
	\end{align*}
	which contradicts \eqref{eq:pfUnboundedII}.
	
	Hence, Corollary \ref{thm:mainAreaGrowth} follows.
	The proof of Corollary \ref{thm:mainCurvatureEst} is a direct consequence of Corollary \ref{thm:mainAreaGrowth} by a standard blow-up argument.
\end{proof}

\section{Conformal metric and $\mu$-bubble}
\label{sec:area_growth_estimate}

\subsection{Conformal change of the metric}\label{conformalchagneofthemetric}
Let $(N^n,g)$ be a compact manifold with smooth boundary. Consider the manifold $(N^n,\bar{g})$ where $\bar{g}=u^{\frac{2\alpha}{n-1}} g$ and $\alpha>0$, it is well-defined since $u$ is positive. Let $h$ be a smooth function on $N$ (to be determined). Choose a Caccioppoli set $\Omega_0$ with smooth interior boundary $\partial \Omega_0\subset \mathring{N}$ and $\partial N\subset \Omega_0$. We consider a prescribed mean curvature problem, that is, to find a Caccioppoli set $\Omega$ with smooth boundary such that $\Omega_0\subset \Omega$ and the mean curvature $\bar{H}$ of the interior boundary $\partial \Omega$ satisfies $\bar{H}=hu^{-\frac{1}{n-1}\alpha}.$ 

Consider the following functional 
\[\mathcal{A}_k(\Omega)=\int_{\partial\Omega} d\bar{\mathcal{H}}^{n-1}-\int_N(\chi_\Omega-\chi_{\Omega_0})hu^{-\frac{1}{n-1}\alpha} d\bar{\mathcal{H}}^n\]
for all Caccioppoli sets $\Omega$ in $N$ with $\Omega\Delta\Omega_0$ compactly contained in $N$. Here the Hausdoff measure is with respect to the conformal metric $\bar{g}$. Suppose that $h$ is chosen such that the minimizer of this functional exists and regularity is ensured. 

The first variation is
\[\mathcal{A}_k'=\int_{\partial\Omega}(\bar{H}-hu^{-\frac{1}{n-1}\alpha})\bar{\psi}.\]
The critical point of $\mathcal{A}_k$ satisfies
\[\bar{H}_{\partial \Omega}=hu^{-\frac{1}{n-1}\alpha}.\]
The mean curvature is defined as $\sum \langle\bar{D}_{\bar{e_i}} \bar{e_i},-\bar{\nu}\rangle$.
And the second derivative at the critical point is
\begin{align*}
    \mathcal{A}_k''=\int_{\partial \Omega} (-\bar{\Delta}\bar{\psi}-|\bar{A}|^2\bar{\psi}-\bar{Ric}(\bar{\nu},\bar{\nu})\bar{\psi})\bar{\psi}-\bar{\psi}^2\bar{D}_{\bar{\nu}}(hu^{-\frac{1}{n-1}\alpha})\geq 0
\end{align*}
where $\bar{\psi}$ is the speed of the variation. The minimizer of $\mathcal{A}_k$  is called \textit{warped $\mu$-bubble}.

\subsection{Variations of functional $\mathcal{A}_k$}\label{variationsofA}
The functional $\mathcal{A}$ can also be written (by letting $k=\alpha$) as 
\[\mathcal{A}_k(\Omega)=\int_{\partial\Omega} u^{k}d\mathcal{H}^{n-1}-\int_N(\chi_\Omega-\chi_{\Omega_0})hu^{k} d\mathcal{H}^n.\]

The first and second variations for $\mathcal{A}_k$ can be calculated directly; see \cite{chodoshlisoapbubble} for detailed calculation. We list them as follows (see \cite{hong24,hongyan24} for their applications)

\begin{lemma}[The first variation]\label{first}
    If $\Omega_t$ is a smooth $1$-parameter family of regions with $\Omega_0=\Omega$ and the normal speed at $t=0$ is $\psi$, then
    \begin{equation*}
        \frac{d}{dt}\mA_{k}(\Omega)=\int_{\Sigma_t}\left(Hu^k+\lp\nabla^{N}u^k,\nu\rp-hu^k\right)\psi d\mH^{n-1}
    \end{equation*}
    where $\nu$ is the outwards pointing unit normal and $H$ is the scalar mean curvature of $\partial \Omega_t$. In particular, a $\mu$-bubble $\Omega$ satisfies
    \begin{equation*}
        H=-ku^{-1}\lp\nabla^{N}u,\nu\rp+h,
    \end{equation*}
    along $\partial\Omega$.
\end{lemma}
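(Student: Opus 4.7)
The plan is to split $\mA_{k}$ into the weighted perimeter $I(t)=\int_{\partial\Omega_{t}} u^{k}\,d\mH^{n-1}$ and the bulk term $J(t)=\int_{N}(\chi_{\Omega_{t}}-\chi_{\Omega_{0}})\,hu^{k}\,d\mH^{n}$, and differentiate each separately at $t=0$. Since $\chi_{\Omega_{0}}$ does not depend on $t$, the derivative of $J$ reduces to the classical formula for differentiating a volume integral under a moving domain, giving $\frac{d}{dt}\big|_{t=0}J=\int_{\partial\Omega_{0}}hu^{k}\psi\,d\mH^{n-1}$ by writing the flow in a tubular neighborhood of $\partial\Omega_{0}$ and applying the coarea formula (or the Reynolds transport theorem).

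For $I$, I would parametrize $\partial\Omega_{t}$ by a smooth family $\phi_{t}:\Sigma\to N$ with $\partial_{t}\phi_{t}|_{t=0}=\psi\nu$, write $I(t)=\int_{\Sigma}(u^{k}\circ\phi_{t})\sqrt{\det g_{t}}\,dx$, and differentiate under the integral sign. The $t$-derivative of $u^{k}\circ\phi_{t}$ contributes $\lp\nabla^{N}u^{k},\nu\rp\psi$; the $t$-derivative of the induced volume element produces $u^{k}\,\mathrm{div}_{\Sigma}(\psi\nu)=u^{k}H\psi$ since $\nu\perp T\Sigma$ means $\mathrm{div}_{\Sigma}(\psi\nu)=\psi\sum_{i}\lp D_{e_{i}}\nu,e_{i}\rp=\psi H$, where the second-fundamental-form convention of Section~\ref{sec:notation} ($A(X,Y)=-D_{X}Y\cdot\nu$) gives $H=\sum_{i}\lp D_{e_{i}}\nu,e_{i}\rp$ and makes this the outward mean curvature ($H=n/r$ on $\partial B_{r}^{n+1}$). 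Summing the two pieces yields
\[
    \frac{d}{dt}\bigg|_{t=0}I = \int_{\partial\Omega_{0}}\bigl(Hu^{k}+\lp\nabla^{N}u^{k},\nu\rp\bigr)\psi\,d\mH^{n-1},
\]
and combining with the bulk contribution gives the stated first variation.

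The ``in particular'' clause follows by setting $\frac{d}{dt}\big|_{t=0}\mA_{k}(\Omega)=0$ for every compactly supported $\psi$, so that the integrand vanishes along $\partial\Omega$: $Hu^{k}+\lp\nabla^{N}u^{k},\nu\rp-hu^{k}=0$. Expanding $\nabla^{N}u^{k}=ku^{k-1}\nabla^{N}u$ and dividing by $u^{k}$ produces the Euler--Lagrange equation $H=-ku^{-1}\lp\nabla^{N}u,\nu\rp+h$. There is no real obstacle here; the calculation is routine, and the only care required is to keep the sign conventions for $\nu$ (outward) and $H$ consistent between the weighted-area and bulk terms, so that the $H u^{k}\psi$ produced by $\mathrm{div}_{\Sigma}(\psi\nu)$ matches the sign of $hu^{k}\psi$ coming from differentiating the oriented volume.
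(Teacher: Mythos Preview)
Your proposal is correct and follows the standard route: split $\mA_k$ into the weighted perimeter and the bulk term, differentiate each via the first variation of area (with the weight producing the extra $\lp\nabla^N u^k,\nu\rp$ term) and the Reynolds transport theorem respectively, then read off the Euler--Lagrange equation. The paper itself does not supply a proof of this lemma; it simply states the formula and refers the reader to \cite{chodoshlisoapbubble} for the computation, so your argument is exactly the kind of direct calculation the paper has in mind.
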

Denote $\Sigma=\partial\Omega.$
\begin{lemma}[The second variation]\label{second}
 If $\Omega_t$ is a smooth $1$-parameter family of regions with $\Omega_0=\Omega$ and the normal speed at $t=0$ is $\psi$, then
    \begin{equation*}
        \begin{split}
            \frac{d^2}{d t^2}\bigg|_{t=0}\mA_{k}(\Omega)={}&\int_{\Sigma}|\nabla^{\Sigma}\psi|^2 u^k-\left(|A_{\Sigma}|^2+\Ric^N(\nu,\nu)\right)\psi^2 u^k\\
            &+\int_M k(k-1)u^{k-2}\left(\nabla^N_{\nu}u\right)^2\psi^2 \\
            &+\int_{\Sigma}ku^{k-1}\left(\Delta_N u-\Delta_{\Sigma} u\right)\psi^2-\left(\nabla_{\nu}(u^kh)\right)\psi^2.
        \end{split}
    \end{equation*}
\end{lemma}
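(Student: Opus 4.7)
The plan is to differentiate the first-variation formula of Lemma \ref{first} once more in $t$ and simplify at $t=0$. The statement is implicitly at a $\mu$-bubble $\Omega$, so by the Euler--Lagrange equation from Lemma \ref{first} the quantity $G := Hu^k + \lp\nabla^N u^k,\nu\rp - hu^k$ vanishes along $\Sigma := \partial\Omega$. Writing the first variation as $\frac{d}{dt}\mA_k(\Omega_t) = \int_{\Sigma_t}G\psi\, d\mH^{n-1}$ and differentiating once more, the contributions from the variation of the measure $d\mH^{n-1}_t$ (which multiplies $G\psi$) and from the acceleration $\partial_t\psi|_{t=0}$ (which multiplies $G$) both vanish at $t=0$. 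Hence
\[
\frac{d^2}{dt^2}\bigg|_{t=0}\mA_k(\Omega_t) \;=\; \int_\Sigma \bigl(\partial_t G|_{t=0}\bigr)\,\psi\, d\mH^{n-1},
\]
and the problem reduces to computing $\partial_t G|_{t=0}$ pointwise on $\Sigma$.

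For the Leibniz expansion I would use the standard normal-variation identities $\partial_t H|_{t=0} = -\Delta_\Sigma\psi - (|A_\Sigma|^2+\Ric^N(\nu,\nu))\psi$, $\partial_t u|_{t=0} = \psi\nabla^N_\nu u$, $\partial_t \nu|_{t=0} = -\nabla^\Sigma\psi$, and $\partial_t(hu^k)|_{t=0} = \psi\nabla^N_\nu(hu^k)$. Differentiating $\lp\nabla^N u^k,\nu\rp$ produces two pieces: a Hessian piece $\psi(\nabla^N)^2 u^k(\nu,\nu)$ from moving the base point, and $-\lp\nabla^N u^k,\nabla^\Sigma\psi\rp = -\lp\nabla^\Sigma u^k,\nabla^\Sigma\psi\rp$ from the rotation of $\nu$ (the normal component of $\nabla^N u^k$ pairs to zero against $\nabla^\Sigma\psi$). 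The chain rule further splits the Hessian piece as $(\nabla^N)^2 u^k(\nu,\nu) = k(k-1)u^{k-2}(\nabla^N_\nu u)^2 + ku^{k-1}(\nabla^N)^2 u(\nu,\nu)$, which is where the coefficient $k(k-1)u^{k-2}(\nabla^N_\nu u)^2\psi^2$ in the statement originates.

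Two cancellations then yield the stated form. First, integration by parts on $\Sigma$ rewrites $-\int_\Sigma u^k\psi\,\Delta_\Sigma\psi$ as $\int_\Sigma u^k|\nabla^\Sigma\psi|^2 + \int_\Sigma \psi\lp\nabla^\Sigma u^k,\nabla^\Sigma\psi\rp$, and the second summand cancels exactly the $-\lp\nabla^\Sigma u^k,\nabla^\Sigma\psi\rp\psi$ contribution from $\partial_t\nu$. Second, applying the tangential--normal decomposition of the ambient Laplacian
\[
(\nabla^N)^2 u(\nu,\nu) \;=\; \Delta_N u - \Delta_\Sigma u - H\,\nabla^N_\nu u
\]
converts the remaining Hessian into $ku^{k-1}(\Delta_N u - \Delta_\Sigma u)\psi^2$ at the cost of an auxiliary term $-ku^{k-1}H\nabla^N_\nu u\,\psi^2$; this latter cancels exactly the $Hku^{k-1}\psi^2\nabla^N_\nu u$ produced when differentiating the factor $Hu^k$ inside $G$. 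After collecting, what survives is precisely the right-hand side of the statement.

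The main obstacle is bookkeeping: the derivative of $\lp\nabla^N u^k,\nu\rp$ fragments into three distinct contributions, and one must recognize both the integration-by-parts/$\partial_t\nu$ cancellation on the tangential-gradient side and the $H\nabla^N_\nu u$ cancellation on the Hessian side before the stated form emerges. The $\mu$-bubble hypothesis is used only at the first step to discard the acceleration and measure-variation terms; everything after that is pointwise calculus on $\Sigma$ combined with a single integration by parts.
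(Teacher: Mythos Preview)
Your proof is correct and follows the standard direct computation that the paper defers to \cite{chodoshlisoapbubble} rather than carrying out explicitly. The paper does not give its own proof of this lemma; it simply states the formula and cites the reference, so there is nothing to compare beyond noting that your argument is precisely the kind of calculation the citation points to. Your identification of the two cancellations (the integration-by-parts/$\partial_t\nu$ cancellation for the tangential gradient, and the $H\nabla^N_\nu u$ cancellation from the Hessian decomposition) is exactly right, as is your observation that the formula is implicitly at a critical point so that the acceleration and measure-variation terms drop out.
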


Let $M$ be an immersion in Proposition \ref{thm:mainAreaGrowth} which is $\delta$-stable. By translation, we assume the image passes the origin. Let $g$ be the pull-back metric on $M$. Denote $\tilde{g}=r^{-2}g$ be the new metric on $N=M\setminus \{0\}$ where $r(x)$ is the distance function to the origin. Notice that the conformal change by $r^{-2}$ turns the ambient space $  \mathbb{R}^{n+1}$ to a manifold $(\mathbb{R}\times\mathbb{S}^n, dt^2+g_{\mathbb{S}^n})$ which has positive scalar curvature. Denote $d\mu, d\tilde{\mu}$ the volume elements of $M$ with respect to $g$ and $\tilde{g}$, respectively. 

We first show that the $\delta$-stability inequality in the metric $\tilde{g}$ in the following. The proof follows in the same lines as in \cite[Proposition 3.10]{chodoshliR5}.
\begin{proposition}
    \[
	\int_{ N} |\tilde{\nabla} \psi|^2_{\tilde{g}}d\tilde{\mu}\ge \int_{ N} \left( \delta r^2|A_N|^2-\frac{n(n-2)}{2}+\frac{n^2-4}{4}|d r|^2 \right)\psi^2 d\tilde{\mu}
\]
for any $\psi\in C^\infty_c(N)$, where $A_N$ is the second fundamental form of $(N,g)$ in $\mathbb{R}^{n+1}$ and $|\cdot|$ is with respect to the induced metric $g$.
\end{proposition}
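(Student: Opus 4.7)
The plan is to derive the $\tilde g$-stability inequality directly from the original $\delta$-stability on $(M,g)$ by inserting a carefully chosen test function, then translate to the conformal metric via the standard formulas for $\tilde g = r^{-2}g$. Concretely, for $\psi\in C_c^\infty(N)$, I would take
\begin{align*}
\varphi = r^{-(n-2)/2}\,\psi
\end{align*}
in the $\delta$-stability $\int_M|\nabla\varphi|^2\ge \delta\int_M|A|^2\varphi^2$. Since $\psi$ vanishes in a neighborhood of the origin, $\varphi\in C^\infty_c(M)$ is admissible even when $0\in M$.

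Expanding $|\nabla\varphi|^2_g$ produces three contributions: the main term $r^{-(n-2)}|\nabla\psi|^2_g$, a cross term $-(n-2)r^{-(n-1)}\psi\langle\nabla r,\nabla\psi\rangle$, and a pointwise term $\tfrac{(n-2)^2}{4}r^{-n}|\nabla r|^2\psi^2$. I would integrate the cross term by parts, rewriting it as $-\tfrac{n-2}{2}r^{-(n-1)}\langle\nabla r,\nabla\psi^2\rangle$ so that shifting the divergence onto $r^{-(n-1)}\nabla r$ produces a factor of $\Delta_M r$. The key geometric input is the minimality of $M\subset\mathbb{R}^{n+1}$, which gives $\Delta_M|x|^2=2n$, hence
\begin{align*}
\Delta_M r = \frac{n-|\nabla r|^2_g}{r}.
\end{align*}
Substituting this back and collecting $\psi^2$ terms, the $|\nabla r|^2$ coefficient telescopes from $\tfrac{(n-2)^2}{4}-\tfrac{n(n-2)}{2}$ down to $-\tfrac{n^2-4}{4}$, leaving an additional bare term $\tfrac{n(n-2)}{2}r^{-n}\psi^2$.

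Finally I would convert everything to the conformal metric using
\begin{align*}
d\mu = r^n\,d\tilde\mu, \qquad |\nabla\psi|^2_g = r^{-2}|\tilde\nabla\psi|^2_{\tilde g},
\end{align*}
so that the kinetic term $\int_M r^{-(n-2)}|\nabla\psi|^2_g\,d\mu$ becomes exactly $\int_N|\tilde\nabla\psi|^2_{\tilde g}\,d\tilde\mu$, the stability right-hand side transforms into $\delta\int_N r^2|A_N|^2\psi^2\,d\tilde\mu$, and the two correction terms become $-\tfrac{n(n-2)}{2}\psi^2$ and $\tfrac{n^2-4}{4}|dr|^2_g\psi^2$ under $d\tilde\mu$, matching the claimed inequality. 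I do not anticipate a serious obstacle: this is essentially the conformal computation in \cite[Proposition 3.10]{chodoshliR5} for the usual stable case, modified only by carrying the parameter $\delta$ through the calculation. The sole item demanding care is the sign and coefficient bookkeeping in the $|\nabla r|^2$ term, which is exactly where the constants $\tfrac{n(n-2)}{2}$ and $\tfrac{n^2-4}{4}$ emerge from the algebraic identity $\tfrac{(n-2)^2}{4} - \tfrac{n(n-2)}{2} = -\tfrac{(n-2)(n+2)}{4}$.
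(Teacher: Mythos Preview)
Your proposal is correct and matches the approach the paper intends: the paper does not give its own proof but refers directly to \cite[Proposition 3.10]{chodoshliR5}, and your computation is precisely that argument with the stability constant replaced by $\delta$. The choice $\varphi=r^{-(n-2)/2}\psi$, the integration by parts using $\Delta_M r=(n-|\nabla r|^2)/r$, and the conformal bookkeeping all check out and produce exactly the stated constants.
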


Thus there exists a positive function $u$ on $N$ such that 
\begin{equation}\label{conformaljacobi}
    \tilde{\Delta} u+(\delta r^2|A_N|^2-\frac{n(n-2)}{2}+\frac{n^2-4}{4}|d r|^2 )u=0.
\end{equation}
We use this function $u$ in the definition of $\mathcal{A}$ in section \ref{conformalchagneofthemetric} or section \ref{variationsofA}. 

\subsection{$\mu$-bubble in $M^3\subset \mathbb{R}^4$} Let us assume $n=3$ in this subsection.

\begin{theorem}\label{diameterandvolumeboundn=3}
    Let $(N_0,\tilde{g})$ be a compact subset of $(N,\tilde{g})$ with connected boundary.  Suppose there exists $p\in N_0$ such that $d_{\tilde{g}}(p,\partial N_0)>4\sqrt{2}\pi. $ Then there exists a connected relative open set $\Omega$ containing $\partial N_0$, $\Omega\subset B_{4\sqrt{2}\pi}^{\tilde{g}}(\partial N_0)$ (Tubular neighborhood under metric $\tilde{g}$), such that each connected component of $\partial \Omega\setminus \partial N_0$ is a 2-sphere with area bounded by $16\pi$ and diameter bounded by $4\pi$ under the metric $\tilde{g}$.
\end{theorem}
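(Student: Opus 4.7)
The plan is to follow the warped $\mu$-bubble method of Chodosh--Li, carried out in the conformal metric $\tilde g$. Under $\tilde g$ the punctured ambient $\mathbb{R}^{4}\setminus\{0\}$ is isometric to the round cylinder $(\mathbb{R}\times\mathbb{S}^3,\,dt^2+g_{\mathbb{S}^3})$, whose scalar curvature is $6$. This extra positivity, together with the $\delta$-stability encoded in the conformal Jacobi equation \eqref{conformaljacobi}, is what I will funnel into the intrinsic curvature of the $\mu$-bubble boundary $\Sigma$.

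First I would set $\rho(\cdot) = d_{\tilde g}(\cdot,\partial N_0)$ and choose a smooth profile $h(\rho)$ defined on $\{0<\rho<d_0\}\cap N_0$ with $d_0 = 4\sqrt{2}\pi$, blowing up to $-\infty$ as $\rho\to 0^+$ and to $+\infty$ as $\rho\to d_0^-$. The exact form (an appropriately scaled $\cot$-type profile satisfying an ODE of the Chodosh--Li shape $2h'+h^{2}+c_{0}=0$) is tuned so that the inequality emerging in the next step comes out with constant $c_0 = \tfrac12$. The two blow-ups ensure that any $\mathcal{A}_k$-minimizer $\Omega$ must contain $\partial N_0$ and must lie inside $B^{\tilde g}_{d_0}(\partial N_0)$; existence and smoothness of such a minimizer with interior boundary $\Sigma$ follow from Zhu's warped $\mu$-bubble existence theorem applied to $(\mathring N_0,\tilde g)$.

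Next I would substitute $\psi\equiv 1$ into the second variation (Lemma \ref{second}), which is admissible as $\Sigma$ is closed. Using the conformal Jacobi equation \eqref{conformaljacobi} to rewrite the $\tilde\Delta u / u$ term, the first variation identity from Lemma \ref{first} to eliminate $H_\Sigma$, and the traced Gauss equation on $\Sigma^{2}\subset(N^{3},\tilde g)$ to trade ambient curvature for the intrinsic Gauss curvature of $\Sigma$, the computation should collapse to a pointwise inequality
\[
K_\Sigma \;\geq\; \tfrac12 \;+\; \bigl(\text{nonnegative terms in }|A_N|^2,\,(\tilde\nabla u)^{2},\text{ and the ODE residual for }h\bigr)
\]
along $\Sigma$. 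The hypothesis $\delta>\tfrac{n-2}{n}=\tfrac{1}{3}$ is precisely what lets the $\delta r^{2}|A_N|^{2}$ term from \eqref{conformaljacobi} absorb the $|A_\Sigma|^{2}$ contribution coming out of the Gauss equation, while the scalar curvature $6$ of $\mathbb{R}\times\mathbb{S}^3$ supplies the leading constant $\tfrac12$.

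Finally, componentwise Gauss--Bonnet together with $K_\Sigma\geq\tfrac12$ forces each component of $\Sigma$ to be a topological $2$-sphere of $\tilde g$-area at most $16\pi$, and Bonnet--Myers then gives $\tilde g$-diameter at most $\pi\sqrt{2}<4\pi$. The main obstacle will be the simultaneous tuning of the exponents ($\alpha$ in $\bar g=u^{2\alpha/(n-1)}\tilde g$ and $k$ in $\mathcal{A}_k$) together with the ODE for $h$, so that in the end the unwanted cross terms (notably those linear in $\tilde\nabla_\nu u$ and the $u^{-1}\tilde\Delta u$ term from Lemma \ref{second}) either vanish or become manifestly nonnegative, leaving exactly the sharp constant $\tfrac12$ for $K_\Sigma$; this same balancing is what pins down the threshold distance $d_0=4\sqrt{2}\pi$.
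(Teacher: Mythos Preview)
Your overall strategy—construct a warped $\mu$-bubble in the conformal metric, feed the conformal Jacobi equation \eqref{conformaljacobi} into the second variation, and use the Gauss equation on $\Sigma\subset(N,\tilde g)$—matches the paper's approach. The gap is in the nature of the inequality you expect to extract.

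You claim the computation ``collapses to a pointwise inequality $K_\Sigma\geq\frac12$''. It does not. The second variation of $\mathcal{A}_k$ at a minimizer is an \emph{integral} inequality $\mathcal{A}_k''(\psi)\geq 0$ for all $\psi$, and the weight $u^k$ together with the term $ku^{k-1}\tilde\Delta_\Sigma u$ from Lemma \ref{second} prevents any pointwise conclusion. Plugging $\psi\equiv 1$ only kills the $\tilde\Delta_\Sigma u$ term by the divergence theorem; what remains is a single integral inequality with the weight $u^k$ still present, from which nothing pointwise about $K_\Sigma$ can be read off. What the paper actually obtains (after substituting $\psi\mapsto u^{-k/2}\psi$ and integrating by parts, which is where the $\tfrac{4}{4-k}$ factor appears) is the \emph{spectral} inequality
\[
\int_\Sigma 2\,|\tilde\nabla^\Sigma\psi|^2 \;\geq\; \int_\Sigma\Bigl(\tfrac14-\tilde K_\Sigma\Bigr)\psi^2,
\]
with $k=2$; note the constant is $\tfrac14$, not $\tfrac12$.

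This has two consequences for your endgame. The area bound survives: taking $\psi=1$ in the spectral inequality and applying Gauss--Bonnet gives $\tfrac14|\Sigma|_{\tilde g}\leq 4\pi$, hence $|\Sigma|_{\tilde g}\leq 16\pi$ (your arithmetic with $K_\Sigma\geq\tfrac12$ would have given $8\pi$, which already signals the constant is off). The diameter bound, however, cannot come from Bonnet--Myers, which requires a pointwise curvature lower bound you do not have. The paper instead invokes the spectral diameter estimate of Chodosh--Li (Lemma~16 of \cite{chodoshlisoapbubble}), which is designed precisely for inequalities of the form $\lambda_1(-c\tilde\Delta_\Sigma+\tilde K_\Sigma)\geq c_0$ on a surface, and this yields the stated bound $4\pi$.
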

\begin{proof}We use $\tilde{}$ to denote terms defined or calculated with respect to the metric $\tilde{g}$.
First we claim that there exists a function $h$ such that (1) the minimizer of $\mathcal{A}_k$ exists and contains $\partial N_0$, denote it by $\Omega$; (2) $h$ satisfies 
\[\frac{1}{2}+h^2-2|\tilde{\nabla} h|\geq 0.\]
This can be achieved by standard arguments (see \cite{chodosh-li-stryker,chodoshliR4anisotropic,chodoshliR5,hong24}). The distance assumption $d_{\tilde{g}}(p,\partial N_0)>4\sqrt{2}\pi$ is used here.

    Since $\Omega$ is the minimizer of $\mathcal{A}_k$ and let $\Sigma$ be one of components in $\partial\Omega$, then by Lemma \ref{second},
    \begin{align*}
        &\int_{\Sigma}|\tilde{\nabla}^{\Sigma}\psi|^2 u^k-ku^{k-1}\psi^2\tilde{\Delta}_\Sigma u\\\geq{}& \int_\Sigma -ku^{k-1} \psi^2\tilde{\Delta}_N u+
\left(|\tilde{A}_{\Sigma}|^2+\tilde{\Ric}^N(\tilde{\nu},\tilde{\nu})\right)\psi^2 u^k\\
            &+\int_\Sigma -k(k-1)u^{k-2}\left(\tilde{\nabla}^N_{\nu}u\right)^2\psi^2 +\left(\tilde{\nabla}^N_{\nu}(u^kh)\right)\psi^2.
    \end{align*}
    By the critical equation we have
    \[\tilde{H}_\Sigma^2=k^2u^{-2}\langle \tilde{\nabla}^N u,\nu\rangle_{\tilde{g}}^2+h^2-2khu^{-1}\langle \tilde{\nabla}^N u,\nu\rangle_{\tilde{g}}.\]
    Then
    \begin{align*}
       &\int_{\Sigma}|\tilde{\nabla}^{\Sigma}\psi|^2 u^k-ku^{k-1}\psi^2\tilde{\Delta}_\Sigma u\\\geq{}& \int_\Sigma \left(-k \frac{\tilde{\Delta}_N u}{u}+
|\tilde{A}_{\Sigma}|^2+\tilde{\Ric}^N(\tilde{\nu},\tilde{\nu})-\frac{1}{2}\tilde{H}_\Sigma^2\right)\psi^2 u^k\\
            &+\int_\Sigma \left(\frac{k^2}{2}-k(k-1)\right)u^{k-2}\left(\tilde{\nabla}^N_{\nu}u\right)^2\psi^2 +\frac{1}{2}h^2\psi^2u^k+(\tilde{\nabla}^N_{\nu}h)u^k\psi^2. 
    \end{align*}
    Replace $\psi$ by $u^{-k/2}\psi$ and let $k\leq 2$, we obtain
    \begin{align*}
       \int_{\Sigma}\frac{4}{4-k}|\tilde{\nabla}^\Sigma\psi|^2&\geq \int_\Sigma \left(-k \frac{\tilde{\Delta}_N u}{u}+
|\tilde{A}_{\Sigma}|^2+\tilde{\Ric}^N(\tilde{\nu},\tilde{\nu})-\frac{1}{2}\tilde{H}_\Sigma^2\right)\psi^2 \\
            &+\int_\Sigma \left(\frac{1}{2}h^2+(\tilde{\nabla}^N_{\nu}h)\right)\psi^2. 
    \end{align*}
    The remaining part is to show the following
    \begin{equation}\label{maininequality}
        -k \frac{\tilde{\Delta}_N u}{u}+
|\tilde{A}_{\Sigma}|^2+\tilde{\Ric}^N(\tilde{\nu},\tilde{\nu})-\frac{1}{2}\tilde{H}_\Sigma^2+\frac{1}{2}\tilde{R}_\Sigma\geq \epsilon_0
    \end{equation}
holds for a positive constant $\epsilon_0.$
By Gauss equation of $\Sigma\subset (N,\tilde{g})$, we have the following well-known equation
\[\tilde{R}_N-2\tilde{\Ric}^N(\tilde{\nu},\tilde{\nu})=\tilde{R}_\Sigma-\tilde{H}_\Sigma^2+|\tilde{A}_\Sigma|^2.\]
Thus \eqref{maininequality} turns to
\[-k \frac{\tilde{\Delta}_N u}{u}+\frac{1}{2}\tilde{R}_N+\frac{1}{2}|\tilde{A}_\Sigma|^2\geq \epsilon_0.\]
Note that all the calculations till now are with respect to the metric $\tilde{g}$, so is every term in the above inequality. We now transform them back to terms with respect to $g.$ By conformal-changing formula,
\begin{align*}
    \tilde{R}_N&=r^2(R^g_N-4\Delta_g\ln \frac{1}{r}-2|\nabla^g\ln (\frac{1}{r})|^2)\\
    &=r^2 R^g_N-4r^2\left(-\frac{\Delta r}{r}+\frac{5}{2}\frac{|d r|^2}{r^2}\right)\\
    &=r^2 R^g_N-4r^2\left(-\frac{3}{r^2}-\frac{H_N(x\cdot\nu)}{r}+\frac{5}{2}\frac{|d r|^2}{r^2}\right)\\
    &=r^2 R^g_N+12-10|d r|^2.
\end{align*}
By the Gauss equation of $N\subset \mathbb{R}^4$, we have
\[R^g_N=-|A_N|^2.\]
Hence, due to \eqref{conformaljacobi},
\begin{align*}
   -k \frac{\tilde{\Delta}_N u}{u}+\frac{1}{2}\tilde{R}_N&\geq k(\delta r^2|A_N|^2-\frac{3}{2}+\frac{5}{4}|d r|^2)+\frac{1}{2}(-r^2|A_N|^2+12-10|d r|^2)\\
   &= (k\delta-\frac{1}{2})r^2|A_N|^2+6-\frac{3}{2}k+(\frac{5}{4}k-5)|d r|^2\\
   &\geq (k\delta-\frac{1}{2})r^2|A_N|^2+1-\frac{1}{4}k\\
   &\geq \frac{4-k}{4}
\end{align*}
where $\frac{3}{2}\leq k\leq 2$ and $\delta>\frac{1}{3}$. For simplicity, we take $k=2.$
Eventually by definition of $h$ we have
\begin{align*}
    \int_{\Sigma}2|\tilde{\nabla}^\Sigma\psi|^2&\geq \int_\Sigma (\frac{1}{4}-\tilde{K}_\Sigma)\psi^2u^2+\int_\Sigma \frac{1}{2}\left(\frac{1}{2}+h^2-2|\tilde{\nabla} h|\right)\psi^2\\
    &\geq \int_\Sigma (\frac{1}{4}-\tilde{K}_\Sigma)\psi^2.
\end{align*}
We then follow the idea of \cite[Lemma 16] {chodoshlisoapbubble} (see also \cite[Theorem 4.3]{hong24} for required modifications) to show that
\[\operatorname{diam}(\Sigma)\leq 4\pi.\]
Let $\psi=1,$ we have
\[\frac{1}{4}|\Sigma|\leq \int_\Sigma \tilde{K}_\Sigma=2\pi \chi(\Sigma)\leq 4\pi,\]
where $\chi(\Sigma)$ is the Euler characteristic of $\Sigma.$
Thus the area of $\Sigma$ is bounded by $16\pi.$
\end{proof}

\subsection{$\mu$-bubble in $M^4\subset \mathbb{R}^5$}%
\label{sec:_r_5_}
We assume $n=4$ and use the following notation.
We write $\left\{ e_1,\cdots ,e_n \right\}$ to be the orthonormal basis of $M$ under the induced metric $g$, and denote $\left\{ \tilde{e}_i=re_i \right\}_{i=1}^n$ to be the corresponding orthonormal basis of $M$ under the metric $\tilde{g}$.
In particular, $\tilde{\nu}$ denotes the unit normal of $M$ under the metric $\tilde{g}$.
Recall the bi-Ricci curvature of $M$ under metric $g$ is defined by
\[
	\operatorname{biRic}^M_{g}(u,v)=\Ric_g^M(u,u)+\Ric_g^M(v,v)-R_g(u,v,u,v),
\]
for any $g(u,u)=g(v,v)=1$ and $g(u,v)=0$.

\begin{proposition}\label{r2A2lowerboundinR5}
	
	We have
\[
	\frac{5}{6}r^2|A_N|^2\ge 7-5|dr|^2-\operatorname{biRic}^N_{\tilde{g}}(\tilde{e}_1,\tilde{\nu}).
\]
\end{proposition}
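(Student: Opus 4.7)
The plan is to reduce the bi-Ricci computation to the scalar-curvature computation already carried out in the $n=3$ case, exploiting the following identity valid on any $4$-manifold: if $\{X_1,X_2,X_3,X_4\}$ is an orthonormal frame, then
\[
\operatorname{biRic}(X_1,X_2)=\tfrac{1}{2}R-K(X_3,X_4),
\]
where $K$ is the sectional curvature of the complementary $2$-plane. Completing $\{\tilde{e}_1,\tilde{\nu}\}$ to a $\tilde{g}$-orthonormal basis $\{\tilde{e}_1,\tilde{\nu},\tilde{e}_3,\tilde{e}_4\}$ of $T_xN$ and writing $\tilde{e}_i=re_i$ for a corresponding $g$-orthonormal frame on $N$, the proposition reduces to a lower bound for $\tfrac{1}{2}\tilde{R}_N-\tilde{K}_{\tilde{g}}(\tilde{e}_3,\tilde{e}_4)$.

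The scalar-curvature piece is exactly the formula already derived in the $n=3$ case (which gave $\tilde{R}_N=r^2R^g_N+12-10|dr|^2$), re-run with $n=4$; together with the minimality identity $\Delta_g r=(4-|dr|^2)/r$ and the Gauss equation $R^g_N=-|A_N|^2$, this yields
\[
\tfrac{1}{2}\tilde{R}_N=12-9|dr|^2-\tfrac{1}{2}r^2|A_N|^2.
\]
For the sectional piece I would apply the standard conformal-change formula for the Riemann tensor with $\phi=-\log r$, which gives, for $g$-orthonormal $e_3,e_4$,
\[
\tilde{K}_{\tilde{g}}(\tilde{e}_3,\tilde{e}_4)=r^2K^g(e_3,e_4)+r\bigl[\nabla^2r(e_3,e_3)+\nabla^2r(e_4,e_4)\bigr]-|dr|^2.
\]
Then I would substitute the Gauss identity $K^g(e_3,e_4)=h_{33}h_{44}-h_{34}^2$ and the standard Hessian formula for $r=|x|$ on a hypersurface, $\nabla^2r(e_i,e_j)=\tfrac{1}{r}(\delta_{ij}-\partial_ir\,\partial_jr)+h_{ij}\tfrac{x\cdot\nu_N}{r}$, to rewrite everything in terms of components of $A_N$ and $dr$.

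Assembling, the proposition becomes the purely algebraic inequality
\[
3-2|dr|^2+\tfrac{1}{3}r^2|A_N|^2 \;\geq\; r^2(h_{33}h_{44}-h_{34}^2)+(\partial_1r)^2+(\partial_2r)^2+(h_{33}+h_{44})(x\cdot\nu_N),
\]
subject to the constraint $(x\cdot\nu_N)^2=r^2(1-|dr|^2)$. I expect this step to be the main obstacle, as the sharp constant $\tfrac{5}{6}$ demands a carefully tuned Young inequality. Setting $a=rh_{33}$, $b=rh_{44}$, $c=rh_{34}$, $s=(x\cdot\nu_N)/r$, the minimality constraint $h_{11}+h_{22}+h_{33}+h_{44}=0$ together with Cauchy--Schwarz yields $r^2(h_{11}^2+h_{22}^2)\geq(a+b)^2/2$, hence $r^2|A_N|^2\geq\tfrac{3}{2}(a^2+b^2)+ab+2c^2$. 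Applying the Young inequality $|s(a+b)|\leq 3s^2+\tfrac{1}{6}(a^2+b^2)$, substituting $s^2=1-|dr|^2$, and using the trivial identity $|dr|^2-(\partial_1r)^2-(\partial_2r)^2=(\partial_3r)^2+(\partial_4r)^2\geq 0$, the residual expression collapses to $(\partial_3r)^2+(\partial_4r)^2+\tfrac{(a-b)^2}{3}+\tfrac{5c^2}{3}\geq 0$, closing the estimate.
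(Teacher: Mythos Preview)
Your argument is correct and reaches the same conclusion as the paper's, but the organisation is genuinely different and worth noting.

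The paper imports the conformal bi-Ricci identity as a black box from \cite{chodoshliR5}, writing
\[
r^2\Bigl(A_{11}^2+A_{22}^2+A_{11}A_{22}+\textstyle\sum_{i>1}A_{1i}^2+\sum_{j>2}A_{2j}^2\Bigr)+\langle x,\nu\rangle(A_{11}+A_{22})=10-7|dr|^2-(\partial_1r)^2-(\partial_2r)^2-\operatorname{biRic}^N_{\tilde g}(\tilde e_1,\tilde\nu),
\]
where $e_2=\nu$; it then applies Cauchy--Schwarz $\langle x,\nu\rangle(A_{11}+A_{22})\le 3\,dr(\nu)^2+\tfrac{r^2}{12}(A_{11}+A_{22})^2$ and the algebraic lemma $A_{11}^2+A_{22}^2+A_{11}A_{22}+\tfrac{1}{12}(A_{11}+A_{22})^2\le\tfrac{5}{6}|A|^2$, which is proved via minimality exactly as you use it. You instead exploit the dimension-4 identity $\operatorname{biRic}(e_1,e_2)=\tfrac12 R-K(e_3,e_4)$ to split the computation into a scalar-curvature piece (which you already know from the $n=3$ analysis) and a single conformal sectional curvature; the final quadratic form is then expressed in the complementary block $(h_{33},h_{44},h_{34})$ rather than $(A_{11},A_{22})$. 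The Young parameter you choose ($|s(a+b)|\le 3s^2+\tfrac{1}{12}(a+b)^2$, which you then weaken to $\tfrac16(a^2+b^2)$) is exactly the paper's, and your residual $(\partial_3r)^2+(\partial_4r)^2+\tfrac13(a-b)^2+\tfrac53 c^2\ge 0$ is the mirror image of theirs. Your route is self-contained (no external citation needed) and makes transparent why the bi-Ricci quantity is tractable in dimension four; the paper's is shorter because the heavy lifting was done elsewhere.

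One cosmetic remark: with the paper's sign convention $A(X,Y)=-D_XY\cdot\nu$, the Hessian formula reads $\nabla^2_N r(e_i,e_j)=\tfrac1r(\delta_{ij}-\partial_ir\,\partial_jr)-h_{ij}\tfrac{x\cdot\nu_N}{r}$, so your cross term $(h_{33}+h_{44})(x\cdot\nu_N)$ carries the opposite sign. This is harmless since you bound $|s(a+b)|$ anyway, but it is worth stating the convention explicitly.
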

\begin{proof}
	Denote $e_2=\nu.$
Recall that we have (see \cite[(3.1)]{chodoshliR5})
\begin{align*}	r^2(A_{11}^2+A_{22}^2&+A_{11}A_{22}+\sum_{i>1} A_{1i}^2+\sum_{j>2} A_{2j}^2)+\left< \vec{x},\nu \right> (A_{11}+A_{22})\\=&10-7|dr|^2-(dr(e_1)^2+dr(e_2)^2)-\operatorname{biRic}^N_{\tilde{g}}(\tilde{e}_1,\tilde{\nu})\\
	\geq& 10-8|dr|^2-\operatorname{biRic}^N_{\tilde{g}}(\tilde{e}_1,\tilde{\nu}).
\end{align*}
Use the Cauchy-Schwarz inequality, we have
\[
	\left< \vec{x},\nu \right> (A_{11}+A_{22})\le 3dr(\nu)^2+\frac{(A_{11}+A_{22})^2}{12}r^2.
\]
We claim the following inequality
\[
	A_{11}^2+A_{22}^2+A_{11}A_{22}+\frac{(A_{11}+A_{22})^2}{12}\le \frac{5}{6}|A|^2.
\]
Indeed, by minimality of $N$ and the arithmetic inequality,
\begin{align*}
    \frac{5}{6}|A|^2\geq& \frac{5}{6}(A_{11}^2+A_{22}^2)+\frac{5}{6}(A_{33}^2+A_{44}^2)\\
    \geq &\frac{5}{6}(A_{11}^2+A_{22}^2)+\frac{5}{6}\frac{(A_{11}+A_{22})^2}{2}\\
    \geq & \frac{13}{12}(A_{11}^2+A_{22}^2)+\frac{7}{6}A_{11}A_{22}.
\end{align*}
This proves the claim.
By the fact that
\[
	1=dr(\nu)^2+|dr|^2,
\]
we obtain
\[
	\frac{5}{6}r^2|A|^2\ge 7-5|dr|^2-\operatorname{biRic}^N_{\tilde{g}}(\tilde{e}_1,\tilde{\nu}).
\]
\end{proof}

\begin{theorem}\label{diameterandvolumeboundn=4}
    Let $(N_0,\tilde{g})$ be a compact subset of $(N,\tilde{g})$ with connected boundary.  Suppose there exists $p\in N_0$ such that $d_{\tilde{g}}(p,\partial N_0)>4\sqrt{3}\pi. $ Then there exists a connected relative open set $\Omega$ containing $\partial N_0$, $\Omega\subset B_{4\sqrt{3}\pi}^{\tilde{g}}(\partial N_0)$, such that each connected component of $\partial \Omega\setminus \partial N_0$ has diameter bounded by $2\sqrt{3}\pi$ and volume bounded by $24\sqrt{3}|\mathbb{S}^3|$ under the metric $\tilde{g}$.
\end{theorem}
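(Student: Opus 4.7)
The plan is to adapt the warped $\mu$-bubble argument of Theorem \ref{diameterandvolumeboundn=3} to one dimension up, the central new ingredient being the bi-Ricci estimate of Proposition \ref{r2A2lowerboundinR5} in place of the scalar-curvature identity available when $n=3$. Since $\Sigma^3\subset\partial\Omega$ is now three-dimensional, Gauss--Bonnet is no longer directly applicable to $\Sigma$, so I expect the argument requires a nested double $\mu$-bubble: a first one in $N$ producing $\Sigma^3$, and a second one inside $\Sigma$ producing a surface $\Gamma^2$ on which Gauss--Bonnet can finally be used.

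First I would choose an auxiliary function $h$ on a tubular neighborhood of $\partial N_0$ in $(N,\tilde g)$, diverging to $\pm\infty$ at the two ends of the tube so that a smooth minimizer $\Omega\supset\partial N_0$ of $\mathcal{A}_k$ exists inside $B^{\tilde g}_{4\sqrt{3}\pi}(\partial N_0)$, with $h$ arranged to satisfy a differential inequality of the form $c_1+h^2-2|\tilde\nabla h|\ge 0$ for an explicit positive $c_1$. Applying Lemma \ref{second} to a component $\Sigma$ of $\partial\Omega\setminus\partial N_0$ with test function $u^{-k/2}\psi$ and eliminating $\tilde H_\Sigma^2$ via Lemma \ref{first} gives a weighted stability inequality
\[
\int_\Sigma \tfrac{4}{4-k}|\tilde\nabla^\Sigma\psi|^2 \ge \int_\Sigma\left(-k\tfrac{\tilde\Delta_N u}{u}+|\tilde A_\Sigma|^2+\tilde{\Ric}^N(\tilde\nu,\tilde\nu)-\tfrac12\tilde H_\Sigma^2+\tfrac12 h^2+\tilde\nabla^N_{\tilde\nu}h\right)\psi^2.
\]
The key new step is to run a second weighted $\mu$-bubble inside $\Sigma^3$ producing $\Gamma^2\subset\Sigma$; combining its second variation with the outer inequality and choosing the orthonormal pair $(\tilde e_1,\tilde\nu)$ appropriately converts $\tilde{\Ric}^N(\tilde\nu,\tilde\nu)$, together with a part of $|\tilde A_\Sigma|^2$ and $\tilde H_\Sigma^2$, into an $\operatorname{biRic}^N_{\tilde g}(\tilde e_1,\tilde\nu)$ term integrated over $\Gamma$.

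Next, I would convert everything back to the ambient metric $g$ via the conformal-change formulas for curvature, substitute the Jacobi-type identity \eqref{conformaljacobi} for $u$, and insert Proposition \ref{r2A2lowerboundinR5}. If the weight $k$ is chosen so that the $r^2|A_N|^2$ coefficient vanishes precisely at $\delta=\tfrac{n-2}{n}=\tfrac12$ and the $|dr|^2$ contributions cancel, the remainder reduces to a pointwise positive lower bound, giving a Poincar\'e-type spectral inequality
\[
\int_\Gamma c\,|\tilde\nabla^\Gamma\varphi|^2\ge \int_\Gamma(c_0-\tilde K_\Gamma)\varphi^2
\]
on $\Gamma^2$ with an explicit $c_0>0$. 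Testing with $\varphi\equiv 1$ and applying Gauss--Bonnet on $\Gamma$ yields an area bound, while testing with a first eigenfunction as in the last step of Theorem \ref{diameterandvolumeboundn=3} produces the diameter bound $\operatorname{diam}_{\tilde g}(\Gamma)\le 2\sqrt{3}\pi$. Integrating the $\Gamma$-area bound along the inner foliation generated by the second $\mu$-bubble gives the outer volume estimate $|\Sigma|_{\tilde g}\le 24\sqrt{3}|\mathbb{S}^3|$.

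The principal obstacle is bookkeeping: Proposition \ref{r2A2lowerboundinR5} already saturates the $\tfrac{5}{6}$ coefficient on $r^2|A_N|^2$, and the sharp threshold $\delta=\tfrac12$ must be hit exactly, so the choices of $k$, of the inner prescribing function, and of the bi-Ricci direction are all forced simultaneously with no slack anywhere. Any loss in one of them would either push the admissible range of $\delta$ above $\tfrac{n-2}{n}$, destroying sharpness, or drive $c_0$ to zero and collapse the Gauss--Bonnet argument on $\Gamma$. For this reason I expect the construction to mirror the one in \cite{chodoshliR5} essentially verbatim, with the role of the harmonic weight played here by the solution $u$ of the $\delta$-twisted Jacobi equation \eqref{conformaljacobi}.
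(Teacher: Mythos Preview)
Your overall setup through the weighted stability inequality is correct and matches the paper. However, the paper does \emph{not} run a nested second $\mu$-bubble inside $\Sigma^3$; this is the main divergence.

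Instead of producing an auxiliary $\Gamma^2$ on which to apply Gauss--Bonnet, the paper simply adds $\tilde{\Ric}^\Sigma(\tilde e_1,\tilde e_1)$ algebraically to both sides of the stability inequality on $\Sigma$. By the Gauss equation for $\Sigma\subset (N,\tilde g)$ one has, pointwise,
\[
|\tilde A_\Sigma|^2+\tilde{\Ric}^N(\tilde\nu,\tilde\nu)-\tfrac12\tilde H_\Sigma^2+\tilde{\Ric}^\Sigma(\tilde e_1,\tilde e_1)
\;\ge\; \operatorname{biRic}^N_{\tilde g}(\tilde e_1,\tilde\nu),
\]
so the curvature term becomes $C=-k\,\tilde\Delta_N u/u+\operatorname{biRic}^N_{\tilde g}(\tilde e_1,\tilde\nu)$. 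Substituting \eqref{conformaljacobi} and Proposition \ref{r2A2lowerboundinR5}, and taking $k=\tfrac{5}{3}$ (which makes the $|dr|^2$ term vanish and the $r^2|A_N|^2$ coefficient equal to $\tfrac{5}{3}\delta-\tfrac{5}{6}>0$ for $\delta>\tfrac12$), gives $C\ge\tfrac13$. The upshot is the spectral Ricci inequality
\[
\tfrac{12}{7}\int_\Sigma|\tilde\nabla^\Sigma\psi|^2\ge\int_\Sigma\bigl(\tfrac{1}{6}-\tilde{\Ric}^\Sigma(\tilde e_1,\tilde e_1)\bigr)\psi^2,
\]
valid for all $\psi$ and all unit $\tilde e_1$. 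From here the paper simply invokes the Shen--Ye diameter estimate \cite{shenyingyerugang} to get $\operatorname{diam}_{\tilde g}(\Sigma)\le 2\sqrt{3}\pi$, and the volume bound from \cite{chodoshliR5} and Antonelli--Xu \cite{antonellixu}. No inner $\mu$-bubble, no $\Gamma^2$, no Gauss--Bonnet.

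Your nested approach is in the spirit of how some of those black-box citations are themselves proved, and could plausibly be made to work, but it is considerably more delicate: you would have to propagate the outer stability inequality as a weight into the inner variational problem and track two prescribing functions simultaneously. The paper's route is cleaner precisely because the replacement for Gauss--Bonnet in dimension three is already packaged as the Shen--Ye theorem applied directly to $\Sigma$.
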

\begin{proof}
The proof follows the same lines as in the proof of Theorem \ref{diameterandvolumeboundn=3}. So we have
     \begin{align}
       \int_{\Sigma}\frac{4}{4-k}|\tilde{\nabla}^\Sigma\psi|^2&\geq \int_\Sigma \left(-k \frac{\tilde{\Delta}_N u}{u}+
|\tilde{A}_{\Sigma}|^2+\tilde{\Ric}^N(\tilde{\nu},\tilde{\nu})-\frac{1}{2}\tilde{H}_\Sigma^2\right)\psi^2 \nonumber\\
     \label{inequalityfordiameterandvolume}       &+\int_\Sigma \left(\frac{1}{2}h^2+(\tilde{\nabla}^N_{\nu}h)\right)\psi^2. 
    \end{align}
    We need to deal with curvature term
    \[C:=-k \frac{\tilde{\Delta}_N u}{u}+
|\tilde{A}_{\Sigma}|^2+\tilde{\Ric}^N(\tilde{\nu},\tilde{\nu})-\frac{1}{2}\tilde{H}_\Sigma^2+\tilde{\operatorname{Ric}}^\Sigma(\tilde{e}_1,\tilde{e}_1).\]
By the Gauss equation, it is equal to
\begin{align*}
    &-k \frac{\tilde{\Delta}_N u}{u}+\tilde{\operatorname{biRic}}^N(\tilde{e}_1,\tilde{\nu})+|\tilde{A}_\Sigma|^2+\tilde{H}_\Sigma \tilde{A}^\Sigma_{11}-\sum_{i=1}^3(\tilde{A}_{i1}^\Sigma)^2-\frac{1}{2}\tilde{H}_\Sigma^2\\
    &\geq -k \frac{\tilde{\Delta}_N u}{u}+\tilde{\operatorname{biRic}}^N(\tilde{e}_1,\tilde{\nu}).
\end{align*}
On the one hand,  by \eqref{conformaljacobi}
\[-k \frac{\tilde{\Delta}_N u}{u}=k(\delta r^2|A_N|^2-4+3|dr|^2).\]
On the other hand, by Proposition \ref{r2A2lowerboundinR5}
\[\tilde{\operatorname{biRic}}^N(\tilde{e}_1,\tilde{\nu})\geq 7-5|dr|^2-\frac{5}{6}r^2|A_N|^2.\]

Thus, if we take $k=\frac{5}{3}$,
\begin{align*}
    C&\geq (k\delta-\frac{5}{6})r^2|A_N|^2+7-4k+(3k-5)|dr|^2\geq \frac{1}{3}.
\end{align*}
Hence, \eqref{inequalityfordiameterandvolume} turns to
\begin{align*}
    \frac{12}{7}\int_\Sigma |\tilde{\nabla}^\Sigma\psi|^2&\geq \int_\Sigma(\frac{1}{6}-\tilde{\operatorname{Ric}}^\Sigma(\tilde{e}_1,\tilde{e}_1))\psi^2+\int_\Sigma\frac{1}{2}\left(\frac{1}{3}+h^2+2(\tilde{\nabla}^N_\nu h)\right)\psi^2\\
    &\geq \int_\Sigma(\frac{1}{6}-\tilde{\operatorname{Ric}}^\Sigma(\tilde{e}_1,\tilde{e}_1))\psi^2.
\end{align*}
The construction of $h$ is similar to the one in Theorem \ref{diameterandvolumeboundn=3} so that $1/3+h^2-2|\tilde{\nabla} h|\geq 0$. Then by Shen-Ye's work \cite{shenyingyerugang}, the diameter of $\Sigma$ is bounded above by $2\sqrt{3}\pi.$ The volume bound of $\Sigma$ follows from Chodosh-Li-Minter-Stryker \cite{chodoshliR5} and Antonelli-Xu \cite{antonellixu}.
\end{proof}



\subsection{Volume growth: Proof of Proposition \ref{thm:deriveAreaGrowth}}
\label{sec:areaGrowth}
In this part, we briefly explain the proof of \ref{prop:itEuclidean} of Proposition \ref{thm:deriveAreaGrowth} (cf. \cite[Theorem 2.3]{chodoshliR4anisotropic}). For \ref{prop:itGeodesic}, we just need to change the distance function used in the arguments to be intrinsic. Let $n=3$ or $4$.

Fix a large $R>0$.
Since $M$ has finite ends, $M\setminus B^{n+1}_{e^{4\sqrt{3}\pi}R}(0)$ has only finitely many unbounded components $\{E_1,\cdots,E_k\}$ where $k$ is the number of ends of $M$. Denote $M'$ the connected component of $M\setminus\{E_1,\cdots,E_k\}$ containing $0$.
By Theorem \ref{diameterandvolumeboundn=3} and \ref{diameterandvolumeboundn=4}, we can always find $M_0\subset M'$ with $d_{\tilde{g}}(\partial M_0,\partial M')\leq 4\sqrt{3}\pi$. The simply-connectedness implies that the number of component of $\partial M_0$ is bounded by $k$, thus $|\partial M_0|_{\tilde{g}}\leq k C$ and each component of $\partial M_0$ has intrinsic $\tilde{g}$-diameter $\leq 4\pi.$ Let $C_1=|\mathbb{B}^n||\mathbb{S}^{n-1}|^{-\frac{n}{n-1}}$.
By Lemma 6.2 (1) in \cite{chodoshliR4anisotropic}, we know that
\[\tilde{M}_R\subset M_0\subset M',\]
where $\tilde{M}_R$ is the connected component of $M\cap B^{n+1}_R(0)$ containing $0$.
Then by isoperimetric inequality for minimal hypersurfaces in Euclidean space (\cite{brendleisoperimetric}),
\[|\tilde{M}_R|\leq |M_0|\leq C_1|\partial M_0|_g^{\frac{n}{n-1}} \leq C_1(e^{4\sqrt{3}\pi}R)^n|\partial M_0|_{\tilde{g}}^\frac{n}{n-1}\leq C(n,k)R^n\]
completing the proof.



\section{Critical case for $n=3$}%
\label{sec:critical_case}
In this section, we characterize the $\frac{1}{3}$-stability of the $3$-dimensional catenoid.

\begin{proof}
[Proof of Theorem \ref{thm:catenoidMain}]
Let $\delta=\frac{n-2}{n}$ and $\beta=\frac{n-2}{n-1}.$
	We suppose that $M\subset \left\{ |x_{n+1}|<\Lambda \right\}$ and $\mathcal{H}^{n}(B_R^M(0))\le \Lambda R^n$ for every $R>0$ with $0 \in M$ for some positive real number $\Lambda$.
	We choose $\varphi \rightarrow (g^\beta-k)^+\varphi$ in $\delta$-stability inequality to obtain (see previous computation in Proposition \ref{prop:Alpha})
	\begin{align*}
	    \int_{ R_k} g^{2\beta-2}|A|^2\varphi^2&\left( 1-\frac{k}{g^\beta} \right) \beta \left( 1-\beta +\frac{1}{1-g^2} \right)E\\&\le \int_{ R_k} g^{2\beta}|\nabla \varphi|^2+\delta k g^{\beta}|A|^2 \varphi^2\left(1-\frac{k}{g^\beta}\right),
	\end{align*}
where $E:=\frac{n-1}{n}|A|^2(1-g^2)-|\nabla g|^2\geq 0$, and $R_k:=\{x\in M:g^\beta(x)>k\}$.
	Taking $k\rightarrow 0^+$, we have
	\[
		\int_{ M\cap \left\{ g>0 \right\}} g^{2\beta-2}|A|^2\varphi^2 \beta (1-\beta+\frac{1}{1-g^2})E\le \int_{M} g^{2\beta}|\nabla \varphi|^2.
	\]
	Then, we have
	\[
		\int_{ M\cap \left\{ g>0 \right\}} g^{2\beta-2}|A|^2\varphi^2E\le C\int_{M} g^{2\beta}|\nabla \varphi|^2,
	\]
	for some $C=C(n)$.

	We choose $X=\varphi x_{n+1}e_{n+1}$ in the first variation formula where $\varphi$ is a non-negative smooth function on $M$ with compact support in $B^M_{2R}(0)$, equal to $1$ in $B^M_R(0)$, and $|\nabla \varphi|\le \frac{2}{R}$.
	Then, we have
	\[
		\int_{ B^M_R(0) } g^2\le \frac{4}{R^2} \int_{  B^M_{2R}(0)\backslash B^M_R(0)} |x_{n+1}|^2\le CR^{n-2},
	\]
	for some $C=C(n,\Lambda)$,
	since $|x_{n+1}|$ is uniformly bounded.
	Now, we focus on the case $n=3$, and $\beta=\frac{1}{2}$.
	For a fixed positive integer $m$ and radius $R_0$, we choose the logarithmic test function $\varphi$ defined by
	\[
		\varphi(p)=
		\begin{cases}
		1, & \mathrm{dist}_M(0,p)\le R_0, \\
		1-\frac{1}{m}\ln (\frac{\mathrm{dist}_M(0,p)}{R_0}), & R_0\le \mathrm{dist}_M(0,p)\le R_0 e^m, \\
		0, & \mathrm{dist}(0,p)\ge R_0 e^m,
		\end{cases}
	\]
	where $\mathrm{dist}_M(\cdot,\cdot)$ denotes the intrinsic distance function on $M$.
	Then, 
	\begin{align*}
		&\int_{  \left\{ g>0 \right\}\cap B^M_{R_0}(0)} g^{-1}|A|^2E
            \le C\sum_{j=1 }^{m-1} \int_{  B^M_{R_0 e^{j+1}}(0)\backslash B^M_{R_0 e^j}(0)} g|\nabla \varphi|^2\\
		\le{}& C \sum_{j=1}^{m-1} \frac{1}{m^2R_0^2 e^{2j}}\int_{  B^M_{R_0 e^{j+1}}(0)\backslash B^M_{R_0 e^j}(0)} g\\
		 \le{}& C\sum_{j=1}^{m-1} \frac{1}{m^2 R_0^2e^{2j}}\left(\int_{  B^M_{R_0 e^{j+1}}(0)\backslash B^M_{R_0 e^j}(0)} g^2\right)^{\frac{1}{2}}(\mathcal{H}^n( B^M_{R_0 e^{j+1}}(0)\backslash B^M_{R_0 e^j}(0)))^{\frac{1}{2}}\\
		 \le{}& C \sum_{j =1}^{m} \frac{1}{m^2 R_0^2 e^{2j}} (R_0e^{j+1})^{\frac{1}{2}} (R_0e^{j+1})^{\frac{3}{2}}\\={}&\frac{C}{m}
	\end{align*}
	for some constant $C=C(n)$.
	Now, we can take $m\rightarrow \infty$ to obtain $E=0$ whenever $g\neq 0$ and $|A|\neq 0$.
	By the lemma presented below, we know if $M$ is not flat, $M$ must be a catenoid or $M=M' \times \mathbb{R} \subset \mathbb{R}^n\times \mathbb{R}$, where the second case is impossible by our boundedness assumption. Here, the notation $M=M'\times \mathbb{R}\subset \mathbb{R}^n\times \mathbb{R}$ means $M$ can be written as a product of $M'$ with the last coordinate and $M'$ is a complete immersed two-sided minimal hypersurface in $\mathbb{R}^n$.
\end{proof}
\begin{lemma}
	
	Let $M$ be a non-flat complete two-sided minimal hypersurface in $\mathbb{R}^{n+1}$.
	If $E\equiv 0$ and $M$ cannot be written as $M'\times \mathbb{R}\subset \mathbb{R}^{n}\times \mathbb{R}$, then $M$ must be a catenoid.
\end{lemma}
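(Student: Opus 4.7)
The idea is to recognize $E$ as the slack in a Kato-type inequality for the Gauss map, and to extract rigidity from its equality case. Setting $u:=\nu\cdot e_{n+1}$ so that $g^2=1-u^2=|e_{n+1}^\top|^2$, and using $\nabla_i u = -A_{ij}(e_{n+1}^\top)_j$ together with $g\,\nabla g=-u\,\nabla u$, a direct computation gives, on $\{g>0\}$,
\[
E = (1-g^2)\left(\frac{n-1}{n}|A|^2 - |A(w)|^2\right),\qquad w:=\frac{e_{n+1}^\top}{|e_{n+1}^\top|}.
\]
Hence the hypothesis $E\equiv 0$ forces $|A(w)|^2 = \tfrac{n-1}{n}|A|^2$ at every point of $\{0<g<1,\,|A|>0\}$. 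Since $\tr A=0$ (minimality) and $|w|=1$, a Cauchy--Schwarz argument in the eigenbasis of $A$ shows this equality can only occur when $w$ is an eigenvector of the shape operator with some eigenvalue $\lambda$ and the remaining $n-1$ eigenvalues are all equal to $-\lambda/(n-1)$; in particular $M$ has at most two distinct principal curvatures there, of multiplicities $1$ and $n-1$.

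\textbf{Carrying out the argument.} First, real analyticity of minimal immersions implies that each of the analytic subsets $\{|A|=0\}$, $\{g=0\}$, $\{g=1\}$ of $M$ is either nowhere dense in $M$ or equal to all of $M$. Non-flatness rules out $\{|A|=0\}=M$ and $\{g=0\}=M$ (the latter would give $\nu\equiv\pm e_{n+1}$, making $M$ a hyperplane), while the non-product hypothesis rules out $\{g=1\}=M$ (in which case $e_{n+1}$ would be everywhere tangent to $M$, forcing $M=M'\times\mathbb{R}$ by integrating the parallel vector field $e_{n+1}$). Thus $U:=\{0<g<1,\,|A|>0\}$ is open and dense, and on $U$ the shape operator of $M$ has exactly two distinct principal curvatures of multiplicities $1$ and $n-1$, with the simple principal direction at each point being $e_{n+1}^\top/|e_{n+1}^\top|$. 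Now one invokes the classical rigidity theorem for minimal hypersurfaces in $\mathbb{R}^{n+1}$ with two principal curvatures of multiplicities $1$ and $n-1$ (Otsuki; see also do Carmo--Dajczer and Cheng--Yau): any such hypersurface is locally a rotation hypersurface over a planar meridian satisfying the $n$-dimensional catenoid ODE, and its axis is parallel to the simple principal direction. Since that direction is forced here to be parallel to $e_{n+1}$, all local pieces of $U$ lie on a single catenoid $C\subset\mathbb{R}^{n+1}$ with axis parallel to $e_{n+1}$. By analytic continuation $M\subset C$, and then the completeness of $M$ together with the simple connectedness of the $n$-catenoid for $n\geq 3$ (the range used in the paper's application) identifies $M$ with $C$ by the standard local-isometry/covering argument.

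\textbf{Main obstacle.} The substantive geometric content is the two-principal-curvature classification: starting from the pointwise eigenstructure one must show, via the Codazzi equations, that the $(n-1)$-dimensional eigendistribution for $-\lambda/(n-1)$ is integrable with totally umbilic round-spherical leaves, and that its orthogonal integral curves are planar meridians solving the catenoid ODE. Rather than reproduce this calculation, we appeal to the classical literature cited above.
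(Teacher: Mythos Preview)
Your proposal is correct and follows essentially the same route as the paper. Both arguments rewrite $E=(1-g^2)\bigl(\tfrac{n-1}{n}|A|^2-|A(w)|^2\bigr)$ with $w=e_{n+1}^\top/|e_{n+1}^\top|$, deduce from the equality case that on the open set where $0<g<1$ and $|A|>0$ the shape operator has exactly two eigenvalues $\lambda,\ -\lambda/(n-1)$ of multiplicities $1$ and $n-1$ with $w$ the simple direction, invoke the do~Carmo--Dajczer rotation-hypersurface rigidity, and finish by analytic/unique continuation. The only cosmetic differences are that the paper expands $E'$ directly in an orthonormal frame with $\tau_1=w$ (obtaining a sum of manifestly nonnegative terms) rather than arguing in the eigenbasis of $A$, and handles the dichotomy by splitting into the cases $\nu\cdot e_{n+1}\not\equiv 0$ versus $\nu\cdot e_{n+1}\equiv 0$ on $U$ at the end, whereas you dispose of the degenerate sets $\{g=0\},\{g=1\},\{|A|=0\}$ up front via analyticity; your treatment of the global step (single axis, covering of the simply connected $n$-catenoid for $n\ge 3$) is in fact a bit more explicit than the paper's.
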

\begin{proof}
	Since $M$ is non-flat, we can select a non-empty subset $U\subset W$ such that $g(x)> 0$, $|A|(x)>0$.
	We choose a basis $\left\{ \tau_1,\cdots ,\tau_n \right\}$ of $T_xM$ such that $e_{n+1}^\top=|e_{n+1}^\top |\tau_1 $.
	Then, we know
	\begin{align*}
		\left|\nabla g\right|^2={}&\sum_{j=1}^n\left|\frac{\partial g}{\partial \tau_j}\right|^2=\sum_{j =1}^{n}\frac{(\nu\cdot e_{n+1})^2}{1-(\nu\cdot e_{n+1})^2} \left( \frac{\partial (\nu\cdot e_{n+1})}{\partial \tau_j} \right)^2\\={}&\sum_{j =1}^{n}\frac{(\nu\cdot e_{n+1})^2}{|e_{n+1}^\top |^2} (A(\tau_j,\tau_1)|e^\top _{n+1}|)^2\\
		 ={}& \sum_{j =1}^{n}(\nu\cdot e_{n+1})^2A_{1j}^2.
	\end{align*}
	Here, we write $A_{ij}=A(\tau_i,\tau_j)$.
	We write $E'=\frac{n-1}{n}|A|^2-\sum_{j =1}^{n}A_{1j}^2$.
	Then
	\begin{align}
		E'={} & \sum_{j=2 }^{n} \frac{n-2}{n}A_{1j}^2+\sum_{1<i<j }^{n} \frac{2n-2}{n}A_{ij}^2+\sum_{j=2 }^{n}\frac{n-1}{n}A_{jj}^2-\frac{1}{n}A_{11}^2\nonumber \\
		 ={}& \sum_{j=2 }^{n} \frac{n-2}{n}A_{1j}^2+\sum_{1<i<j }^{n} \frac{2n-2}{n}A_{ij}^2+\frac{1}{n}\left( (n-1)\sum_{j=2 }^{n}A_{jj}^2-\left( \sum_{j=2 }^{n}A_{jj} \right)^2 \right).
		 \label{eq:pfEprime}
	\end{align}
	If $(\nu\cdot e_{n+1})\neq 0$ at some open subset $U'\subset U$, then we have $E'=0$ on $U'$.
	From identity \eqref{eq:pfEprime}, we know $E'\equiv 0$ if and only if
	\[
		(n-1)A_{11}=-A_{jj},\quad \text{ for }2\le j\le n,\quad A_{ij}= 0\quad \text{ for }i\neq j.
	\]
	By a result of do Carmo and Dajczer \cite{doCarmo1983RotationHyper}, we know $M\cap U'$ is part of a catenoid.
	By the unique continuation theorem, we know $M$ is a catenoid.
	At last, if $(\nu\cdot e_{n+1})\equiv 0$ on $U$, we know $e_{n+1}$ is a tangent vector field on $U$ and hence $M\cap U$ is a product hypersurface locally.
	Again by the unique continuation theorem, we know $M$ is a product hypersurface and it is invariant under the translation along direction $e_{n+1}$.
\end{proof}

\bibliographystyle{alpha}
\bibliography{mybib,references}
\end{document}